\newtheorem{theorem}{Theorem}[section]
\newtheorem{lemma}[theorem]{Lemma}
\newtheorem{corollary}[theorem]{Corollary}
\theoremstyle{definition}
\newtheorem{definition}[theorem]{Definition}
\newtheorem{example}[theorem]{Example}
\theoremstyle{remark}
\numberwithin{equation}{section}
\begin{document}

\title [{Some properties of graded generalized 2-absorbing submodules}]{Some properties of graded generalized 2-absorbing submodules }

 \author[{{S. Alghueiri and K. Al-Zoubi }}]{\textit{Shatha Alghueiri and Khaldoun Al-Zoubi*  }}

\address
{\textit{Shatha Alghueiri, Department of Mathematics and Statistics,
Jordan University of Science and Technology, P.O.Box 3030, Irbid
22110, Jordan.}}
\bigskip
{\email{\textit{ghweiri64@gmail.com}}}
\address
{\textit{Khaldoun Al-Zoubi, Department of Mathematics and
Statistics, Jordan University of Science and Technology, P.O.Box
3030, Irbid 22110, Jordan.}}
\bigskip
{\email{\textit{kfzoubi@just.edu.jo}}}

 \subjclass[2010]{13A02, 16W50.}

\date{}
\begin{abstract}
Let $G$ be an abelian group with identity $e$. Let $R$ be a $G$-graded
commutative ring and $M$ a graded $R$-module. In this paper we will obtain some results concerning the graded generalized
2-absorbing submodules and their homogeneous components. Special attention has been paid, when graded rings are graded gr-Noetherian, to find extra properties of these graded submodules.
\end{abstract}

\keywords{graded generalized 2-absorbing submodule, graded 2-absorbing submodule, graded 2-absorbing primary submodule  . \\
$*$ Corresponding author}
 \maketitle
%----------------------------------------------------------------------
%----------------------------------------------------------------------
%                Sec1:   Introduction
%----------------------------------------------------------------------
\section{Introduction and Preliminaries}
Throughout this paper all rings are commutative with identity and all
modules are unitary.

Badawi in \cite{18} introduced the concept of 2-absorbing ideals of
commutative rings. The notion of 2-absorbing ideals was extended to
2-absorbing submodules in \cite{20} and \cite{29}. The concept of
generalized 2-absorbing submodules, as a generalization of 2-absorbing
submodules, was introduced in \cite{22}.

In \cite{30}, Refai and Al-Zoubi introduced the concept of graded
primary ideal. The concept of graded 2-absorbing ideals, as a generalization
of graded prime ideals, was introduced and studied by Al-Zoubi, Abu-Dawwas
and Ceken in\cite{5}.
In \cite{14}, Al-Zoubi and Sharafat introduced the concept of graded
2-absorbing primary ideal which is a generalization of graded primary ideal.
The concept of graded prime submodules was introduced and studied by many
authors, see for example \cite{2, 3, 11, 12, 13, 14, 15, 28}.
The concept of graded 2-absorbing submodules, as a generalization of graded
prime submodules, was introduced by Al-Zoubi and Abu-Dawwas in \cite{4}
and studied in \cite{8, 9}. Then many generalizations of graded
2-absorbing submodules were studied such as graded 2-absorbing primary (see
\cite{19}), graded weakly 2-absorbing primary (see \cite{7} ) and
graded classical 2-absorbing submodule (see \cite{6}).

Recently, the authors in \textbf{[1], }introduced the concept of graded
generalized 2-absorbing submodules as a generalization of graded 2-absorbing
submodules.

Here, we introduce several results concerning graded generalized 2-absorbing submodules and their homogeneous components.

First, we recall some basic properties of graded rings and modules which
will be used in the sequel. We refer to \cite{24, 25, 26, 27} for
these basic properties and more information on graded rings and modules.
Let $G$ be an abelian multiplicative group with identity element $e.$ A ring
$R$ is called a graded ring (or $G$-graded ring) if there exist additive
subgroups $R_{h}$ of $R$ indexed by the elements $h\in G$ such that $%
R=\oplus _{h\in G}R_{h}$ and $R_{g}R_{h}\subseteq R_{gh}$ for all $g,h\in G$%
. The non-zero elements of $R_{h}$ are said to be homogeneous of degree $h$
and all the homogeneous elements are denoted by $h(R)$, i.e. $h(R)=\cup
_{h\in G}R_{h}$. If $a\in R$, then $a$ can be written uniquely as $%
\sum_{h\in G}a_{h}$, where $a_{g}$ is called a homogeneous component of $a$
in $R_{h}$. Moreover, $R_{e}$ is a subring of $R$ and $1\in R_{e}$. Let $R=\oplus _{h\in G}R_{h}$ be a $G$-graded ring. An ideal
$J$ of $R$ is said to be a graded ideal if $J=\sum_{h\in G}(J\cap
R_{h}):=\sum_{h\in G}J_{h}$, (see \cite{27}).

Let $R=\oplus _{h\in G}R_{h}$ be a $G$-graded ring. A left $R$-module $M$ is
said to be a graded $R$-module (or $G$-graded $R$-module) if there exists a
family of additive subgroups $\{M_{h}\}_{h\in G}$ of $M$ such that $M=\oplus
_{h\in G}M_{h}$ and $R_{g}M_{h}\subseteq M_{gh}$ for all $g,h\in G$. Also if
an element of $M$ belongs to $\cup _{h\in G}M_{h}=h(M)$, then it is called a
homogeneous. Note that $M_{h}$ is an $R_{e}$-module for every $h\in G$. Let $%
R=\oplus _{h\in G}R_{h}$ be a $G$-graded ring. A submodule $N$ of $M$ is
said to be a graded submodule of $M$ if $N=\oplus _{h\in G}(N\cap
M_{h}):=\oplus _{h\in G}N_{h}$. In this case, $N_{h}$ is called the $h$%
-component of $N$, (see \cite{27}).

Let $R$ be a $G$-graded ring and $M$ a graded $R$-module. The \textit{graded radical} of a graded
ideal $I$, denoted by $Gr(I)$, is the set of all $\ x=\sum_{g\in G}x_{g}\in R
$ such that for each $g\in G$ there exists $n_{g}>0$ with $x_{g}^{n_{g}}\in I
$. Note that, if $r$ is a homogeneous element, then $r\in Gr(I)$ if and only
if $r^{n}\in I$ for some $n\in \mathbb{N}$,(see \cite{30}).
The \textit{graded radical} of a graded submodule $N$ of $M$,
denoted by $Gr_{M}(N)$, is defined to be the intersection of all graded
prime submodules of $M$ containing $N$. If $N$ is not contained in any
graded prime submodule of $M$, then $Gr_{M}(N)=M$ (see \cite{16}).

%---------------------------------------------------------------------

%                Sec2:
%----------------------------------------------------------------------

 \section{Graded $G2$-absorbing submodules}

%------------------------------------------------------------------------------
%-----------------------------Definition  2.1------------------------------------
\begin{definition}\cite[Definition 2.2]{1}
 Let $R$ be a $G$-graded ring, $M$ a graded $R$-module and $N$ a proper
graded submodule of $M$. Then $N$ is said to be a graded generalized
2-absorbing or graded $G2$-absorbing\ submodule of $M$ if whenever $%
r_{h},s_{\alpha }\in h(R)$ and $m_{\lambda }\in h(M)$ with $r_{h}s_{\alpha
}m_{\lambda }\in N$, implies either $r_{h}^{k_{1}}\in (N:_{R}m_{\lambda })$
or $s_{\alpha }^{k_{2}}\in (N:_{R}m_{\lambda })$ or $r_{h}s_{\alpha }\in
(N:_{R}M)$, for some $k_{1},k_{2}\in
%TCIMACRO{\U{2124} }%
%BeginExpansion
\mathbb{Z}
%EndExpansion
^{+}.$
\end{definition}
%---------------------------------definition 2-2------------------
\begin{definition}
Let $R$ be a $G$-graded ring, $M$ a
graded $R$-module, $N=\oplus _{g\in G}N_{g}$ a graded submodule of $M$ and $%
g\in G.$ We say that $N_{g}$ is a $g$-generalized 2-absorbing or $g$-$G2$%
-absorbing submodule of the $R_{e}$-module $M_{g}$ if $N_{g}\not=M_{g};$ and
whenever $r_{e},s_{e}\in R_{e}$ and $m_{g}\in M_{g}$ with $%
r_{e}s_{e}m_{g}\in N_{g}$, implies either $r_{e}^{k_{1}}\in
(N_{g}:_{R_{e}}m_{g})$ or $s_{e}^{k_{2}}\in (N_{g}:_{R_{e}}m_{g})$ or $%
r_{e}s_{e}\in (N_{g}:_{R_{e}}M_{g})$, for some $k_{1},$ $k_{2}\in
%TCIMACRO{\U{2124} }%
%BeginExpansion
\mathbb{Z}
%EndExpansion
^{+}.$
\end{definition}

%-------------------------------------------------------------------------------

%-------------------------------------------------------------------------------

%-----------------------------------------------lemma 2-3--------------
\begin{lemma}
Let $R$ be a $G$-graded ring, $M$
a graded $R$-module, $I=\oplus _{g\in G}I_{g}$ a graded ideal of $R$ and $N$
a graded $G2$-absorbing submodule of $M$. If $r_{g}\in h(R)$, $m_{\lambda
}\in h(M)$ and $h\in G$ such that $r_{g}I_{h}m_{\lambda }\subseteq N$, then
either $r_{g}\in Gr((N:_{R}m_{\lambda }))$ or $I_{h}\subseteq
Gr((N:_{R}m_{\lambda }))$ or $r_{g}I_{h}\subseteq (N:_{R}M)$.
\end{lemma}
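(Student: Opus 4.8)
The plan is to argue by direct implication: assuming the first two alternatives fail, namely $r_g \notin Gr((N:_R m_\lambda))$ and $I_h \not\subseteq Gr((N:_R m_\lambda))$, I will deduce the third, $r_g I_h \subseteq (N:_R M)$. Throughout I will use that $(N:_R m_\lambda)$, $(N:_R M)$ and the graded radical $Gr((N:_R m_\lambda))$ are graded ideals of $R$; in particular each is closed under subtraction. Note first that $r_g \notin Gr((N:_R m_\lambda))$ forces $r_g \neq 0$, so $r_g \in h(R)$ genuinely, and since $I_h \not\subseteq Gr((N:_R m_\lambda))$ I may fix a witness $s_h \in I_h$ with $s_h \notin Gr((N:_R m_\lambda))$.

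The core step is a single application of the defining property of a graded $G2$-absorbing submodule. For any nonzero $a_h \in I_h$ the hypothesis $r_g I_h m_\lambda \subseteq N$ gives $r_g a_h m_\lambda \in N$ with $r_g, a_h \in h(R)$ and $m_\lambda \in h(M)$; hence one of $r_g^{k_1} \in (N:_R m_\lambda)$, $a_h^{k_2} \in (N:_R m_\lambda)$, or $r_g a_h \in (N:_R M)$ holds. The first option is excluded by $r_g \notin Gr((N:_R m_\lambda))$, so every nonzero $a_h \in I_h$ satisfies either $a_h \in Gr((N:_R m_\lambda))$ or $r_g a_h \in (N:_R M)$. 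Applying this to the witness $s_h$ yields $r_g s_h \in (N:_R M)$.

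It remains to show $r_g a_h \in (N:_R M)$ for every $a_h \in I_h$, which I expect to be the main obstacle: the dichotomy above gives the conclusion immediately when $a_h \notin Gr((N:_R m_\lambda))$, but is uninformative when $a_h \in Gr((N:_R m_\lambda))$. To handle that case I use the standard shift-by-a-good-element trick. Given $a_h \in I_h \cap Gr((N:_R m_\lambda))$, consider $a_h + s_h \in I_h$. Since $Gr((N:_R m_\lambda))$ is closed under subtraction and $s_h \notin Gr((N:_R m_\lambda))$, the element $a_h + s_h$ cannot lie in $Gr((N:_R m_\lambda))$ (otherwise $s_h = (a_h+s_h) - a_h$ would); in particular $a_h + s_h \neq 0$, so $a_h + s_h \in h(R)$. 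Applying the core step to $a_h + s_h$ then gives $r_g(a_h + s_h) \in (N:_R M)$, and subtracting the already established relation $r_g s_h \in (N:_R M)$ yields $r_g a_h \in (N:_R M)$, using that $(N:_R M)$ is an ideal. Combining the two cases shows $r_g a_h \in (N:_R M)$ for all $a_h \in I_h$ (the case $a_h = 0$ being trivial), that is $r_g I_h \subseteq (N:_R M)$, which completes the argument.
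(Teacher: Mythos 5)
Your proof is correct, and it is the mirror image of the paper's own argument rather than a copy of it: you negate the first two alternatives and derive the third, $r_g I_h \subseteq (N:_R M)$, whereas the paper negates the first and the \emph{third} and derives the second, $I_h \subseteq Gr((N:_R m_\lambda))$. Both proofs turn on exactly the same mechanism: fix a witness violating the negated alternative (your $s_h \notin Gr((N:_R m_\lambda))$, the paper's $i_h'$ with $r_g i_h' \notin (N:_R M)$), deduce what the $G2$-absorbing property forces for that witness, perturb an arbitrary element $a_h \in I_h$ by the witness, apply the defining property to $r_g(a_h + s_h)m_\lambda \in N$, and finish using that both $Gr((N:_R m_\lambda))$ and $(N:_R M)$ are closed under subtraction. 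The different decomposition does buy a small economy: in your version every element of $I_h$ needs only one application of the defining property (your core-step dichotomy: $a_h \in Gr((N:_R m_\lambda))$ or $r_g a_h \in (N:_R M)$), while in the paper's version the branch $r_g(i_h + i_h') \in (N:_R M)$ forces a second invocation of the property, on $r_g i_h m_\lambda \in N$, before one can conclude $i_h \in Gr((N:_R m_\lambda))$. Your handling of the degenerate cases ($r_g \neq 0$, $a_h = 0$, $a_h + s_h \neq 0$) is also more explicit than the paper's, which passes over them silently; this matters only because the definition of a graded $G2$-absorbing submodule quantifies over homogeneous elements.
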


\begin{proof}
 Suppose that $r_{g}\not\in Gr((N:_{R}m_{\lambda }))$ and $%
r_{g}I_{h}\not\subseteq (N:_{R}M)$. Hence, there exists $i_{h}^{\prime }\in
I_{h}$ such that $r_{g}i_{h}^{\prime }\not\in (N:_{R}M)$. Now, $%
r_{g}i_{h}^{\prime }m_{\lambda }\in N$ implies that $i_{h}^{\prime }\in
Gr((N:_{R}m_{\lambda }))$ as $N$ is a graded $G2$-absorbing submodule of $M$%
. Now, Let $i_{h}\in I_{h}$, then $r_{g}(i_{h}+i_{h}^{\prime })m_{\lambda
}\in N$. Hence, either $i_{h}+i_{h}^{\prime }\in Gr((N:_{R}m_{\lambda }))$
or $r_{g}(i_{h}+i_{h}^{\prime })\in (N:_{R}M)$. If $i_{h}+i_{h}^{\prime }\in
Gr((N:_{R}m_{\lambda }))$, then $i_{h}\in Gr((N:_{R}m_{\lambda }))$ since $%
i_{h}^{\prime }\in Gr((N:_{R}m_{\lambda }))$. If $r_{g}(i_{h}+i_{h}^{\prime
})\in (N:_{R}M)$, then $r_{g}i_{h}\not\in (N:_{R}M)$. But $%
r_{g}i_{h}m_{\lambda }\in N$ which yields that $i_{h}\in
Gr((N:_{R}m_{\lambda }))$ as $N$ is a graded $G2$-absorbing submodule of $M$%
. Therefore, we get $I_{h}\subseteq Gr((N:_{R}m_{\lambda }))$.
\end{proof}
%-----------------------lemma 2.4-----------------------
\begin{lemma}
 Let $R$ be a $G$-graded ring, $M$ a
graded $R$-module, $I=\oplus _{g\in G}I_{g},J=\oplus _{g\in G}J_{g}$ be two
graded ideals of $R$ and $N$ a graded $G2$-absorbing submodule of $M$. If $%
m_{\lambda }\in h(M)$ and $g,h\in G$ such that $J_{g}I_{h}m_{\lambda
}\subseteq N$, then either $J_{g}\subseteq Gr((N:_{R}m_{\lambda }))$ or $%
I_{h}\subseteq Gr((N:_{R}m_{\lambda }))$ or $J_{g}I_{h}\subseteq (N:_{R}M)$.
\end{lemma}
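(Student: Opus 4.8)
The plan is to reduce the two-ideal statement to the previous single-element lemma by a symmetric double application, exactly mirroring the structure of the proof just given for Lemma~2.3. The hypothesis is $J_gI_hm_\lambda\subseteq N$, which in particular means $r_gI_hm_\lambda\subseteq N$ for every homogeneous $r_g\in J_g$. Thus for each fixed $r_g\in J_g$ the preceding lemma applies and yields one of the three trichotomy conclusions. My goal is to show that if the two ``global'' alternatives $J_g\subseteq Gr((N:_Rm_\lambda))$ and $J_gI_h\subseteq(N:_RM)$ both fail, then $I_h\subseteq Gr((N:_Rm_\lambda))$ must hold.

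First I would assume, for contradiction, that $J_g\not\subseteq Gr((N:_Rm_\lambda))$ and $J_gI_h\not\subseteq(N:_RM)$, and aim to derive $I_h\subseteq Gr((N:_Rm_\lambda))$. From the first assumption pick a homogeneous element $r_g'\in J_g$ with $r_g'\notin Gr((N:_Rm_\lambda))$. Since $r_g'I_hm_\lambda\subseteq N$, Lemma~2.3 applied to $r_g'$ forces either $I_h\subseteq Gr((N:_Rm_\lambda))$ (in which case we are done) or $r_g'I_h\subseteq(N:_RM)$. So I may assume $r_g'I_h\subseteq(N:_RM)$. Symmetrically, from the second assumption there is a homogeneous $r_g''\in J_g$ with $r_g''I_h\not\subseteq(N:_RM)$; applying Lemma~2.3 to $r_g''$ and again discarding the desired conclusion, I get $r_g''\in Gr((N:_Rm_\lambda))$.

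The heart of the argument is then the ``summing'' trick that Lemma~2.3 itself used, but now performed on the $J_g$ side. Consider the homogeneous element $r_g'+r_g''\in J_g$. It still satisfies $(r_g'+r_g'')I_hm_\lambda\subseteq N$, so Lemma~2.3 applies to it and gives one of the three options for the pair $(r_g'+r_g'',I_h)$. I would check each: if $I_h\subseteq Gr((N:_Rm_\lambda))$ we are done directly; if $r_g'+r_g''\in Gr((N:_Rm_\lambda))$, then combined with $r_g''\in Gr((N:_Rm_\lambda))$ and the fact that $Gr((N:_Rm_\lambda))$ is a graded ideal (hence closed under homogeneous sums in a fixed degree), I would conclude $r_g'\in Gr((N:_Rm_\lambda))$, contradicting the choice of $r_g'$; and if $(r_g'+r_g'')I_h\subseteq(N:_RM)$, then since $(N:_RM)$ is a graded submodule-quotient closed under such combinations and $r_g'I_h\subseteq(N:_RM)$, I would get $r_g''I_h\subseteq(N:_RM)$, contradicting the choice of $r_g''$. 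The only surviving case is $I_h\subseteq Gr((N:_Rm_\lambda))$, completing the proof.

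The main obstacle I anticipate is the bookkeeping of the degree constraints and the closure properties being invoked. I must be careful that $r_g',r_g'',r_g'+r_g''$ are all genuinely homogeneous of the same degree $g$ (so that Lemma~2.3 is applicable to each), and that $Gr((N:_Rm_\lambda))$ really is closed under taking homogeneous sums within degree $g$: this uses that $(N:_Rm_\lambda)$ is a graded ideal and that for homogeneous elements membership in the graded radical is the clean condition $r^n\in(N:_Rm_\lambda)$ noted in the preliminaries, so that $r_g',r_g''$ having powers in $(N:_Rm_\lambda)$ combines correctly. The analogous closure for $(N:_RM)$ under the combination $r_g'I_h,r_g''I_h\subseteq(N:_RM)\Rightarrow(r_g'+r_g'')I_h\subseteq(N:_RM)$ is just additivity of the colon module and is routine. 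Apart from verifying these closures explicitly, the argument is a direct mirror of Lemma~2.3.
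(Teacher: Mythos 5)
Your proof is correct, and while it rests on the same key lemma as the paper's argument (Lemma 2.3) and the same closure facts, it uses a genuinely different decomposition of the trichotomy. The paper negates the two radical alternatives, assuming $J_g\not\subseteq Gr((N:_Rm_\lambda))$ and $I_h\not\subseteq Gr((N:_Rm_\lambda))$, and then proves the third alternative $J_gI_h\subseteq(N:_RM)$ element by element: it first extracts from Lemma 2.3 the auxiliary containments $(J_g\setminus Gr((N:_Rm_\lambda)))I_h\subseteq(N:_RM)$ and $J_g(I_h\setminus Gr((N:_Rm_\lambda)))\subseteq(N:_RM)$, and then, for arbitrary $j_g\in J_g$ and $i_h\in I_h$, runs the trichotomy on the doubly perturbed product $(j_g+j_g')(i_h+i_h')m_\lambda\in N$ to conclude $j_gi_h\in(N:_RM)$ in every case. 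You instead negate the first and third alternatives and prove the second; this lets you invoke Lemma 2.3's wholesale conclusion $I_h\subseteq Gr((N:_Rm_\lambda))$ rather than arguing elementwise inside $I_h$, so a single perturbation $r_g'+r_g''$ on the $J_g$ side and three applications of Lemma 2.3 suffice. The net effect is a shorter argument: the paper pays for its choice of negated alternatives with the double sum and the set-difference bookkeeping, while your version pushes all the $I_h$-side work into Lemma 2.3 itself. The closure properties you flag are exactly the ones the paper also uses implicitly (that $Gr((N:_Rm_\lambda))$ is closed under same-degree homogeneous sums and differences, via the binomial argument applied to the graded ideal $(N:_Rm_\lambda)$, and that $(N:_RM)$ is an ideal, hence closed under differences), so your argument is at the same level of rigor as the paper's.
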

\begin{proof}
 Suppose that $J_{g}\not\subseteq Gr((N:_{R}m_{\lambda }))$ and $%
I_{h}\not\subseteq Gr((N:_{R}m_{\lambda }))$, then there exist $%
j_{g}^{\prime }\in J_{g}$ and $i_{h}^{\prime }\in I_{h}$ such that $%
j_{g}^{\prime }\not\in Gr((N:_{R}m_{\lambda }))$ and $i_{h}^{\prime }\not\in
Gr((N:_{R}m_{\lambda })).$ Now, by \textbf{Lemma 2.3}, $j_{g}^{\prime
}I_{h}m_{\lambda }\subseteq N$ yields that $j_{g}^{\prime }I_{h}\subseteq
(N:_{R}M)$ and so $(J_{g}\backslash Gr((N:_{R}m_{\lambda })))I_{h}\subseteq
(N:_{R}M)$. Also, since $i_{h}^{\prime }\not\in Gr((N:_{R}m_{\lambda }))$, $%
J_{g}(I_{h}\backslash Gr((N:_{R}m_{\lambda })))\subseteq (N:_{R}M)$. Hence,
we get $j_{g}^{\prime }i_{h}^{\prime }\in (N:_{R}M)$. Now, let $j_{g}\in
J_{g}$ and $i_{h}\in I_{h},$ then $j_{g}^{\prime }i_{h}\in (N:_{R}M)$ and $%
j_{g}i_{h}^{\prime }\in (N:_{R}M)$. Thus, $(j_{g}+j_{g}^{\prime
})(i_{h}+i_{h}^{\prime })m_{\lambda }\in N$ yields that either $%
j_{g}+j_{g}^{\prime }\in Gr((N:_{R}m_{\lambda }))$ or $i_{h}+i_{h}^{\prime
}\in Gr((N:_{R}m_{\lambda }))$ or $(j_{g}+j_{g}^{\prime
})(i_{h}+i_{h}^{\prime })\in (N:_{R}M)$. If $j_{g}+j_{g}^{\prime }\in
Gr((N:_{R}m_{\lambda }))$, then $j_{g}\not\in Gr((N:_{R}m_{\lambda }))$
which follows that $j_{g}\in J_{g}\backslash Gr((N:_{R}m_{\lambda }))$ and
then $j_{g}i_{h}\in (N:_{R}M)$. Similarly, if $i_{h}+i_{h}^{\prime }\in
Gr((N:_{R}m_{\lambda }))$, then $j_{g}i_{h}\in (N:_{R}M)$. Now, if $%
(j_{g}+j_{g}^{\prime })(i_{h}+i_{h}^{\prime })=j_{g}i_{h}+j_{g}i_{h}^{\prime
}+j_{g}^{\prime }i_{h}+j_{g}^{\prime }i_{h}^{\prime }\in (N:_{R}M)$, then we
get $j_{g}i_{h}\in (N:_{R}M)$. Therefore, $J_{g}I_{h}\subseteq (N:_{R}M)$.
\end{proof}

%-----------------------------theorem 2-5----------------------------------------------

The following Theorem gives us a characterization of a graded $G2$-absorbing submodule.

\begin{theorem}
Let $R$ be a $G$-graded ring, $M$ a
graded $R$-module and $N$ a proper graded submodule of $M$. Let $I=\oplus
_{g\in G}I_{g},J=\oplus _{g\in G}J_{g}$ be two graded ideals of $R$ and $%
K=\oplus _{g\in G}K_{g}$ a graded submodule of $M.$ Then the following
statements are equivalent:
 \begin{enumerate}[\upshape (i)]
   \item $N$ is a graded $G2$-absorbing submodule of $M$.

   \item If $g,h,\lambda \in G$ with $J_{g}I_{h}K_{\lambda }\subseteq N$, then
either $J_{g}\subseteq Gr((N:_{R}K_{\lambda }))$ or $I_{h}\subseteq
Gr((N:_{R}K_{\lambda }))$ or $J_{g}I_{h}\subseteq (N:_{R}M)$.

\end{enumerate}

\end{theorem}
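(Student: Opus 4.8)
The plan is to prove the two implications separately, with the substantive work concentrated in $(i)\Rightarrow(ii)$. The reverse direction $(ii)\Rightarrow(i)$ should be a routine specialization: given homogeneous elements $r_h,s_\alpha\in h(R)$ and $m_\lambda\in h(M)$ with $r_hs_\alpha m_\lambda\in N$, I would apply $(ii)$ to the principal graded ideals $J=Rr_h$, $I=Rs_\alpha$ and the principal graded submodule $K=Rm_\lambda$, choosing the components $J_h=R_e r_h$, $I_\alpha=R_e s_\alpha$, $K_\lambda=R_e m_\lambda$ so that $J_hI_\alpha K_\lambda\subseteq N$. The three conclusions of $(ii)$ then translate, via the homogeneous characterization of $Gr$ recalled in the preliminaries, into $r_h\in Gr((N:_R m_\lambda))$, $s_\alpha\in Gr((N:_R m_\lambda))$, or $r_hs_\alpha\in(N:_RM)$, which is exactly the defining condition for $N$ to be graded $G2$-absorbing.

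For the forward direction $(i)\Rightarrow(ii)$, the key observation is that I have already done most of the heavy lifting in Lemma~2.4, which handles the case $K_\lambda=\{m_\lambda\}$ for a single homogeneous element. So the plan is to \emph{bootstrap} Lemma~2.4 from a single element $m_\lambda$ to an entire homogeneous component $K_\lambda$. Concretely, assume $J_gI_hK_\lambda\subseteq N$ but $J_g\not\subseteq Gr((N:_RK_\lambda))$ and $I_h\not\subseteq Gr((N:_RK_\lambda))$; the goal is $J_gI_h\subseteq(N:_RM)$. For each fixed $m_\lambda\in K_\lambda$ we have $J_gI_hm_\lambda\subseteq N$, so Lemma~2.4 gives one of the three alternatives \emph{for that particular} $m_\lambda$. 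The difficulty is that the alternative may depend on $m_\lambda$: some elements of $K_\lambda$ may force $J_g\subseteq Gr((N:_Rm_\lambda))$, others may force $I_h\subseteq Gr((N:_Rm_\lambda))$, and we want to conclude the uniform statement $J_gI_h\subseteq(N:_RM)$ in the remaining case.

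The main obstacle, then, is exactly this \textbf{dependence of the trichotomy on the chosen element}, and the standard device to defeat it is a covering/prime-avoidance style argument. Since $J_g\not\subseteq Gr((N:_RK_\lambda))=\bigcap_{m_\lambda\in K_\lambda}Gr((N:_Rm_\lambda))$, there is a witness $j_g'\in J_g$ and some $m_\lambda'\in K_\lambda$ with $j_g'\notin Gr((N:_Rm_\lambda'))$; similarly pick $i_h'\in I_h$ and $m_\lambda''$ for $I_h$. The plan is to run the additive-perturbation trick already used inside Lemmas~2.3 and~2.4: for arbitrary $m_\lambda\in K_\lambda$ one examines the sum $m_\lambda+m_\lambda'$ (and $m_\lambda+m_\lambda''$), uses that $Gr((N:_R(m_\lambda+m_\lambda')))$ cannot contain the distinguished element because its presence in $N$-colon of the sum would, combined with membership for $m_\lambda'$, be contradictory, and thereby forces the $(N:_RM)$ alternative uniformly across all of $K_\lambda$. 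Carrying the three-term product expansion $(j_g+j_g')(i_h+i_h')$ through as in Lemma~2.4 and collecting the cross terms should yield $J_gI_h\subseteq(N:_RM)$. I would expect to apply Lemma~2.4 repeatedly to single elements and then glue the conclusions, so the only genuinely new content beyond Lemma~2.4 is this gluing step; the arithmetic of the cross terms is routine once the correct witnesses are fixed.
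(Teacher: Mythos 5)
Your direction $(ii)\Rightarrow(i)$ is exactly the paper's: specialize to the principal graded ideals and principal graded submodule generated by $r_g$, $s_h$, $m_\lambda$, whose relevant homogeneous components are $R_er_g$, $R_es_h$, $R_em_\lambda$. Your direction $(i)\Rightarrow(ii)$ is a genuine reorganization of the paper's argument rather than a copy of it: the paper assumes $J_gI_h\not\subseteq(N:_RM)$ and derives the dichotomy ``$J_g\subseteq Gr((N:_RK_\lambda))$ or $I_h\subseteq Gr((N:_RK_\lambda))$'', while you negate the two radical containments and aim at $J_gI_h\subseteq(N:_RM)$. Both routes run on the same fuel, namely Lemma 2.4 applied to single homogeneous elements of $K_\lambda$ together with the trick of feeding a \emph{sum} of two elements of $K_\lambda$ back into the dichotomy and subtracting. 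Granted your witnesses $j_g',m_\lambda'$ and $i_h',m_\lambda''$, your gluing does close, and more simply than you describe: apply Lemma 2.4 to $m_\lambda'$, to $m_\lambda''$, and to $m_\lambda'+m_\lambda''$; there is no need to range over arbitrary $m_\lambda\in K_\lambda$, and the cross-term expansion $(j_g+j_g')(i_h+i_h')$ is internal to the proof of Lemma 2.4 and need not be repeated at this level.

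The gap is the identity you lean on at the very first step: $Gr((N:_RK_\lambda))=\bigcap_{m_\lambda\in K_\lambda}Gr((N:_Rm_\lambda))$. Only the inclusion $\subseteq$ holds in general. For homogeneous $j$, membership on the left demands a \emph{single} exponent $n$ with $j^nK_\lambda\subseteq N$, whereas membership on the right allows the exponent to depend on $m_\lambda$; when $K_\lambda$ is not finitely generated over $R_e$ these differ (trivially graded example: $R=\mathbb{Z}$, $N=0$, $K_\lambda=M=\oplus_{i\geq1}\mathbb{Z}/2^i\mathbb{Z}$, $j=2$). So $J_g\not\subseteq Gr((N:_RK_\lambda))$ need not produce any single $m_\lambda'$ with $j_g'\notin Gr((N:_Rm_\lambda'))$: the failure may be purely one of uniformity, and your witness extraction is unjustified. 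You should know, however, that the paper's own proof makes the mirror image of exactly this leap at its last line: it shows that one of $J_g$, $I_h$ is contained in $Gr((N:_Rk_\lambda))$ for \emph{every} $k_\lambda\in K_\lambda$, and silently upgrades this pointwise statement to the uniform containment in $Gr((N:_RK_\lambda))$. So your proposal is no less rigorous than the published argument; moreover the defect is not patchable in general, since $(i)\Rightarrow(ii)$ is in fact false without a finiteness hypothesis: take $R=k[x,y]$ trivially graded, $M=\oplus_{i\geq1}R/(x^i,xy)\oplus R$, $N=0$, $J=(x)$, $I=(y)$, $K=\oplus_{i\geq1}R/(x^i,xy)\oplus 0$; one checks that $N$ is graded $G2$-absorbing (for every $m\neq0$ either $(0:_Rm)=0$ or $Gr((0:_Rm))$ is $(x)$ or $(x,y)$, hence prime) and $JIK\subseteq N$, yet $Gr((N:_RK))=(xy)$ contains neither $x$ nor $y$, and $JI=(xy)\not\subseteq(N:_RM)=0$. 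Both your argument and the paper's become correct verbatim once $K_\lambda$ is finitely generated as an $R_e$-module, which is the hypothesis each proof implicitly uses.
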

\begin{proof}
 $(i)\Rightarrow (ii)$ Suppose that $N$ is a graded $G2$-absorbing
submodule of $M$ and let $g,h,\lambda \in G$ with $J_{g}I_{h}K_{\lambda
}\subseteq N$ and $J_{g}I_{h}\not\subseteq (N:_{R}M)$. Now, let $k_{\lambda
}\in K_{\lambda },$ then since $J_{g}I_{h}k_{\lambda }\subseteq N,$ either $%
J_{g}\subseteq Gr((N:_{R}k_{\lambda }))$ or $I_{h}\subseteq
Gr((N:_{R}k_{\lambda }))$ by \textbf{Lemma 2.4}. Assume that there exist $%
k_{1_{\lambda }},k_{2_{\lambda }}\in K_{\lambda }$ such that $%
J_{g}\not\subseteq Gr((N:_{R}k_{1_{\lambda }}))$ and $I_{h}\not\subseteq
Gr((N:_{R}k_{2_{\lambda }}))$. Hence, $I_{h}\subseteq
Gr((N:_{R}k_{1_{\lambda }}))$ and $J_{g}\subseteq Gr((N:_{R}k_{2_{\lambda
}}))$. Now, since $J_{g}I_{h}(k_{1_{\lambda }}+k_{2_{\lambda }})\subseteq N,$
either $J_{g}\subseteq Gr((N:_{R}k_{1_{\lambda }}+k_{2_{\lambda }}))$ or $%
I_{h}\subseteq Gr((N:_{R}k_{1_{\lambda }}+k_{2_{\lambda }}))$ which yields
that either $J_{g}\subseteq Gr((N:_{R}k_{1_{\lambda }}))$ and $%
I_{h}\subseteq Gr((N:_{R}k_{2_{\lambda }}))$, a contradiction. Therefore,
either $J_{g}\subseteq Gr((N:_{R}K_{\lambda }))$ or $I_{h}\subseteq
Gr((N:_{R}K_{\lambda }))$.

$(ii)\Rightarrow (i)$ Assume that $(ii)$ holds.\ Let $r_{g},s_{h}\in h(R)$
and $m_{\lambda }\in h(M)$ such that $r_{g}s_{h}m_{\lambda }\in N.$ Now, let
$J=\langle r_{g}\rangle $ and $I=\langle s_{h}\rangle $ be two graded ideals
of $R$ generated by $r_{g}$ and $s_{h},$ respectively, and let $K=\langle
m_{\lambda }\rangle $ be a graded submodule of $M$ generated by $m_{\lambda
}.$ Thus, $J_{g}I_{h}K_{\lambda }\subseteq N,$ then we get either $%
J_{g}\subseteq Gr((N:_{R}K_{\lambda }))$ or $I_{h}\subseteq
Gr((N:_{R}K_{\lambda }))$ or $J_{g}I_{h}\subseteq (N:_{R}M).$ Hence, either $%
r_{g}\subseteq Gr((N:_{R}m_{\lambda }))$ or $s_{h}\subseteq
Gr((N:_{R}m_{\lambda }))$ or $r_{g}s_{h}\subseteq (N:_{R}M).$ Therefore, $N$
is a graded $G2$-absorbing submodule of $M$.
 \end{proof}
%---------------------------------------------------------------------------

Recall from \cite{14} that if $I=\oplus _{g\in G}I_{g}$
is a graded ideal of a $G$-graded ring $R,$ then $I_{e}$ is said to be an $e$%
-$2$-absorbing primary ideal of $R_{e}$ if $I_{e}\not=R_{e};$ and whenever $%
r_{e},s_{e},t_{e}\in R_{e}$ with $r_{e}s_{e}t_{e}\in I_{e}$, then either $%
r_{e}s_{e}\in I_{e}$ or $r_{e}t_{e}\in Gr(I_{e})$ or $s_{e}t_{e}\in Gr(I_{e})
$.

%-----------------------------theorem 2-6----------------------------------------------
\begin{theorem}
Let $R$ be a $G$-graded ring, $M$ a
graded $R$-module, $N=\oplus _{g\in G}N_{g}$, $K=\oplus _{g\in G}K_{g}$ be
two graded submodules of $M$ and $g\in G.$ If $N_{g}$ is a $g$-$G2$%
-absorbing submodule of an $R_{e}$-module $M_{g},$ then the following
statements hold:
\begin{enumerate}[\upshape (i)]
   \item If $K_{g}\not\subseteq N_{g}$, then $(N_{g}:_{R_{e}}K_{g})$ is an $e$-$%
2$-absorbing primary ideal of $R_{e}$.
    \item $(N_{g}:_{R_{e}}M_{g})$ is an $e$-$2$-absorbing primary ideal of $%
R_{e}$.

   \end{enumerate}
\end{theorem}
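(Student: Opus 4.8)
The plan is to prove (i) in full and then read off (ii) as the special case $K=M$. Throughout I set $I_e:=(N_g:_{R_e}K_g)$ and work entirely inside the $R_e$-module $M_g$, on which $N_g$ is a $g$-$G2$-absorbing submodule. The main tool will be the analogue of \textbf{Theorem 2.5} for $M_g$: viewing $R_e$ as a trivially graded ring and $M_g$ as a trivially graded $R_e$-module (so that every element is homogeneous of degree $e$), the statement and proof of \textbf{Theorem 2.5} carry over verbatim. Thus whenever $A,B$ are ideals of $R_e$ and $L$ is an $R_e$-submodule of $M_g$ with $ABL\subseteq N_g$, one has $A\subseteq Gr((N_g:_{R_e}L))$ or $B\subseteq Gr((N_g:_{R_e}L))$ or $AB\subseteq(N_g:_{R_e}M_g)$. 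I first record that $I_e\neq R_e$: since $K_g\not\subseteq N_g$ we have $1\notin I_e$, which is the non-triviality needed for an $e$-$2$-absorbing primary ideal.

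For the substance of (i), take $r_e,s_e,t_e\in R_e$ with $r_es_et_e\in I_e$, i.e. $r_es_et_eK_g\subseteq N_g$, and form the $R_e$-submodule $L:=R_et_eK_g$ of $M_g$. Since $N_g$ is an $R_e$-submodule, $\langle r_e\rangle\langle s_e\rangle L\subseteq N_g$, so applying the analogue of \textbf{Theorem 2.5} with the principal ideals $\langle r_e\rangle,\langle s_e\rangle$ and the submodule $L$ produces exactly three alternatives: $r_e\in Gr((N_g:_{R_e}L))$, or $s_e\in Gr((N_g:_{R_e}L))$, or $r_es_e\in(N_g:_{R_e}M_g)$.

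The decisive step is to match these three outcomes with the three alternatives in the definition of an $e$-$2$-absorbing primary ideal. The last case is easy: $r_es_eM_g\subseteq N_g$ forces $r_es_eK_g\subseteq N_g$, i.e. $r_es_e\in I_e$. In the first case $r_e^{\,n}L\subseteq N_g$ for some $n$, that is $r_e^{\,n}t_eK_g\subseteq N_g$; multiplying through by $t_e^{\,n-1}$ and again using that $N_g$ is an $R_e$-submodule gives $(r_et_e)^{n}K_g=t_e^{\,n-1}r_e^{\,n}t_eK_g\subseteq N_g$, whence $r_et_e\in Gr(I_e)$, and the second case yields $s_et_e\in Gr(I_e)$ symmetrically. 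This multiply-by-$t_e^{\,n-1}$ manoeuvre is the crux of the argument and the step I expect to need the most care: it repairs the mismatch between the ``$r_e^{\,n}t_e$'' that the radical supplies and the ``$(r_et_e)^{n}$'' that the definition demands, and it relies on the homogeneity remark that for $x\in R_e$ one has $x\in Gr(I_e)$ if and only if $x^{n}\in I_e$ for some $n$.

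Finally, (ii) is immediate from (i) by choosing the graded submodule $K=M$, so that $K_g=M_g$. Because $N_g$ is $g$-$G2$-absorbing we have $N_g\neq M_g$, hence $K_g=M_g\not\subseteq N_g$, and (i) gives that $(N_g:_{R_e}K_g)=(N_g:_{R_e}M_g)$ is an $e$-$2$-absorbing primary ideal of $R_e$.
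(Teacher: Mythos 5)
Your proof is correct, and its skeleton matches the paper's: apply the $G2$-absorbing trichotomy to $t_{e}K_{g}$, convert $r_{e}^{\,n}t_{e}K_{g}\subseteq N_{g}$ into $(r_{e}t_{e})^{n}K_{g}\subseteq N_{g}$ by multiplying through by $t_{e}^{\,n-1}$, and deduce (ii) from (i) by taking $K=M$ and using the properness $N_{g}\neq M_{g}$. The genuine difference is how the trichotomy with a \emph{uniform} exponent over all of $K_{g}$ is obtained. The paper simply asserts that $r_{e}s_{e}t_{e}K_{g}\subseteq N_{g}$ gives $r_{e}^{n_{1}}t_{e}K_{g}\subseteq N_{g}$ or $s_{e}^{n_{2}}t_{e}K_{g}\subseteq N_{g}$ or $r_{e}s_{e}M_{g}\subseteq N_{g}$ ``as $N_{g}$ is a $g$-$G2$-absorbing submodule''; but Definition 2.2 is stated element by element, with exponents depending on the single element $m_{g}$, so this step tacitly requires both the resolution of mixed cases (some elements of $K_{g}$ handled by powers of $r_{e}$, others by powers of $s_{e}$) and the passage to one exponent valid on all of $K_{g}$ --- exactly the content of Lemma 2.4 and Theorem 2.5. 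You make this explicit by invoking the trivially graded specialization of Theorem 2.5 with the principal ideals $\langle r_{e}\rangle,\langle s_{e}\rangle$ and the submodule $L=R_{e}t_{e}K_{g}$, which is legitimate: under the trivial grading, Definition 2.2 says precisely that $N_{g}$ is a graded $G2$-absorbing submodule of the $R_{e}$-module $M_{g}$, the homogeneous elements are all of $R_{e}$, and $Gr$ becomes the ordinary radical, so $r_{e}\in Gr((N_{g}:_{R_{e}}L))$ really does mean $r_{e}^{\,n}L\subseteq N_{g}$ for a single $n$. Thus your route costs one extra reduction but buys rigor: it supplies the justification that the paper's one-line appeal to the definition omits (and your explicit $t_{e}^{\,n-1}$ manoeuvre is what the paper's ``Therefore, $(r_{e}t_{e})^{n_{1}}K_{g}\subseteq N_{g}$'' performs silently); you also record the non-triviality $(N_{g}:_{R_{e}}K_{g})\neq R_{e}$, which the paper leaves implicit.
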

\begin{proof}
  $(i)$ Let $r_{e},s_{e},t_{e}\in R_{e}$ such that $%
r_{e}s_{e}t_{e}\in (N_{g}:_{R_{e}}K_{g})$. Then either $%
r_{e}^{n_{1}}t_{e}K_{g}\subseteq N_{g}$ for some $n_{1}\in
%TCIMACRO{\U{2124} }%
%BeginExpansion
\mathbb{Z}
%EndExpansion
^{+}$ or $s_{e}^{n_{2}}t_{e}K_{g}\subseteq N_{g}$ for some $n_{2}\in
%TCIMACRO{\U{2124} }%
%BeginExpansion
\mathbb{Z}
%EndExpansion
^{+}$ or $r_{e}s_{e}M_{g}\subseteq N_{g}$ as $N_{g}$ is a $g$-$G2$-absorbing
submodule of $M_{g}$. Therefore, $(r_{e}t_{e})^{n_{1}}K_{g}\subseteq N_{g}$
or $(s_{e}t_{e})^{n_{2}}K_{g}\subseteq N_{g}$ or $r_{e}s_{e}K_{g}\subseteq
N_{g}$. Therefore, $(N_{g}:_{R_{e}}K_{g})$ is an $e$-$2$-absorbing primary
ideal of $R_{e}$.

$(ii)$ The proof comes from $M_{g}\not\subseteq N_{g}$ and part $(i)$.
\end{proof}
%---------------------------------------------------------------------------
Recall from \cite{5} that if $I=\oplus _{g\in G}I_{g}$ is a
graded ideal of a $G$-graded ring $R,$ then $I_{e}$ is said to be an $e$-$2$%
-absorbing ideal of $R_{e}$ if $I_{e}\not=R_{e};$ and whenever $%
r_{e},s_{e},t_{e}\in R_{e}$ with $r_{e}s_{e}t_{e}\in I_{e}$, then either $%
r_{e}s_{e}\in I_{e}$ or $r_{e}t_{e}\in I_{e}$ or $s_{e}t_{e}\in I_{e}$.

%-----------------------------theorem 2-7----------------------------------------------
\begin{theorem}
 Let $R$ be a $G$-graded ring and $%
I=\oplus _{g\in G}I_{g}$ a graded ideal of $R.$ If $I_{e}$ is an $e$-$2$%
-absorbing primary ideal of $R_{e}$, then $Gr(I_{e})$ is an $e$-$2$%
-absorbing ideal of $R_{e}$.
\end{theorem}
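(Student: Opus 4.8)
The plan is to reduce everything to manipulating powers inside the ordinary radical. First I would record the two facts that make the notation behave well: since $r_{e},s_{e},t_{e}\in R_{e}$ are homogeneous of degree $e$, so are all their products, and for a homogeneous element $x$ we have $x\in Gr(I_{e})$ if and only if $x^{n}\in I_{e}$ for some $n\in\mathbb{N}$ (as noted in the preliminaries). Thus $Gr(I_{e})$ restricted to $R_{e}$ is exactly the usual radical of the ideal $I_{e}$ of the commutative ring $R_{e}$, and in particular it is an ideal of $R_{e}$.

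Next I would verify properness: $Gr(I_{e})\neq R_{e}$, for otherwise $1_{R_{e}}\in Gr(I_{e})$ would give $1=1^{n}\in I_{e}$, contradicting $I_{e}\neq R_{e}$ from the hypothesis that $I_{e}$ is $e$-$2$-absorbing primary.

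For the main verification I would take arbitrary $r_{e},s_{e},t_{e}\in R_{e}$ with $r_{e}s_{e}t_{e}\in Gr(I_{e})$ and seek one of $r_{e}s_{e}$, $r_{e}t_{e}$, $s_{e}t_{e}$ in $Gr(I_{e})$. By the radical characterization there is $n$ with $(r_{e}s_{e}t_{e})^{n}=r_{e}^{\,n}s_{e}^{\,n}t_{e}^{\,n}\in I_{e}$. Now I would apply the $e$-$2$-absorbing \emph{primary} property of $I_{e}$ to the triple $r_{e}^{\,n},s_{e}^{\,n},t_{e}^{\,n}$, which yields three cases: either $r_{e}^{\,n}s_{e}^{\,n}=(r_{e}s_{e})^{n}\in I_{e}$, or $r_{e}^{\,n}t_{e}^{\,n}=(r_{e}t_{e})^{n}\in Gr(I_{e})$, or $s_{e}^{\,n}t_{e}^{\,n}=(s_{e}t_{e})^{n}\in Gr(I_{e})$. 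In the first case $r_{e}s_{e}\in Gr(I_{e})$ is immediate; in the second and third cases I would use idempotence of the radical, namely $Gr(Gr(I_{e}))=Gr(I_{e})$: from $(r_{e}t_{e})^{n}\in Gr(I_{e})$ one gets $(r_{e}t_{e})^{nm}\in I_{e}$ for some $m$, hence $r_{e}t_{e}\in Gr(I_{e})$, and symmetrically for $s_{e}t_{e}$.

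I do not anticipate a serious obstacle here; the argument is essentially the observation that an $e$-$2$-absorbing primary ideal becomes $e$-$2$-absorbing after passing to its radical, and the only point requiring a little care is the passage in the last two cases, where the conclusion $(r_{e}t_{e})^{n}\in Gr(I_{e})$ must be upgraded to $r_{e}t_{e}\in Gr(I_{e})$ via the idempotence of the radical operator on the commutative ring $R_{e}$. Everything else is bookkeeping with homogeneous powers of degree $e$.
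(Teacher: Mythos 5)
Your proof is correct and follows essentially the same route as the paper: raise $r_{e}s_{e}t_{e}\in Gr(I_{e})$ to a power $n$ with $r_{e}^{n}s_{e}^{n}t_{e}^{n}\in I_{e}$, apply the $e$-$2$-absorbing primary property to the triple $r_{e}^{n},s_{e}^{n},t_{e}^{n}$, and use the radical characterization (with its idempotence) to descend from powers back to the elements. The only difference is presentational — the paper argues contrapositively by assuming $r_{e}t_{e},s_{e}t_{e}\notin Gr(I_{e})$, whereas you run the three cases directly and make explicit the step $Gr(Gr(I_{e}))=Gr(I_{e})$ that the paper leaves implicit.
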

\begin{proof}
 Let $r_{e},s_{e},t_{e}\in R_{e}$ such that $r_{e}s_{e}t_{e}\in
Gr(I_{e})$ and neither $r_{e}t_{e}\in Gr(I_{e})$ nor $s_{e}t_{e}\in
Gr(I_{e}) $. Hence, there exists $n\in
%TCIMACRO{\U{2124} }%
%BeginExpansion
\mathbb{Z}
%EndExpansion
^{+}$ such that $(r_{e}s_{e}t_{e})^{n}=r_{e}^{n}s_{e}^{n}t_{e}^{n}\in I_{e}$%
. Since $I_{e}$ is an $e$-$2$-absorbing primary and $r_{e}t_{e}\not\in
Gr(I_{e})$ and $s_{e}t_{e}\not\in Gr(I_{e})$, we get $%
r_{e}^{n}s_{e}^{n}=(r_{e}s_{e})^{n}\in I_{e}$. Thus, $r_{e}s_{e}\in
Gr(I_{e}) $. Therefore, $Gr(I_{e})$ is an $e$-$2$-absorbing ideal of $R_{e}$.
\end{proof}
%---------------------------------------------------------------------------
%-----------------------------corollary 2-8----------------------------------------------
\begin{corollary}
 Let $R$ be a $G$-graded ring,
$M$ a graded $R$-module, $N=\oplus _{g\in G}N_{g}$ a graded submodule of $M$
and $g\in G.$ If $N_{g}$ is a $g$-$G2$-absorbing submodule of an $R_{e}$%
-module $M_{g}$, then $Gr((N_{g}:_{R_{e}}M_{g}))$ is an $e$-$2$-absorbing
ideal of $R_{e}$.
\end{corollary}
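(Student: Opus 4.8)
The plan is to combine the two previous results in the obvious chain. Observe that the Corollary is an immediate consequence of Theorem 2.7(ii) together with Theorem 2.8. So my proof is essentially a one-line composition, and the work has already been done.

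First I would invoke Theorem 2.7(ii): since $N_{g}$ is a $g$-$G2$-absorbing submodule of the $R_{e}$-module $M_{g}$, the colon ideal $(N_{g}:_{R_{e}}M_{g})$ is an $e$-$2$-absorbing primary ideal of $R_{e}$. This is exactly part (ii) of Theorem 2.7, which required only $M_{g}\not\subseteq N_{g}$ (automatic, since $N_{g}$ is proper in $M_{g}$ by definition of a $g$-$G2$-absorbing submodule) and part (i).

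Next I would apply Theorem 2.8 with the graded ideal taken to be $(N_{g}:_{R}M)$, whose $e$-component is $(N_{g}:_{R_{e}}M_{g})$; more directly, Theorem 2.8 says that whenever an $e$-component $I_{e}$ is an $e$-$2$-absorbing primary ideal of $R_{e}$, its graded radical $Gr(I_{e})$ is an $e$-$2$-absorbing ideal of $R_{e}$. Setting $I_{e}=(N_{g}:_{R_{e}}M_{g})$ and using the conclusion of the previous paragraph, Theorem 2.8 yields that $Gr((N_{g}:_{R_{e}}M_{g}))$ is an $e$-$2$-absorbing ideal of $R_{e}$, which is precisely the claim.

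There is no real obstacle here: the statement is a corollary in the genuine sense, and the only thing to check is that the hypotheses of Theorem 2.7(ii) and Theorem 2.8 are met, both of which are immediate. The proof can therefore be written as: by Theorem 2.7(ii), $(N_{g}:_{R_{e}}M_{g})$ is an $e$-$2$-absorbing primary ideal of $R_{e}$, and then by Theorem 2.8 its graded radical $Gr((N_{g}:_{R_{e}}M_{g}))$ is an $e$-$2$-absorbing ideal of $R_{e}$, as desired.
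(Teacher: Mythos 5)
Your proof is correct and is essentially identical to the paper's: the paper likewise first cites its Theorem 2.6(ii) (your ``Theorem 2.7(ii)'') to get that $(N_{g}:_{R_{e}}M_{g})$ is an $e$-$2$-absorbing primary ideal of $R_{e}$, and then its Theorem 2.7 (your ``Theorem 2.8'') to conclude that $Gr((N_{g}:_{R_{e}}M_{g}))$ is an $e$-$2$-absorbing ideal of $R_{e}$; only your numbering is shifted by one. The one small blemish is your parenthetical claim that $(N_{g}:_{R_{e}}M_{g})$ is the $e$-component of the graded ideal $(N_{g}:_{R}M)$ -- that identification is not accurate in general, but it is also unnecessary, since the radical result applies to $(N_{g}:_{R_{e}}M_{g})$ directly (it is the $e$-component of the graded ideal it generates in $R$), exactly as the paper does.
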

\begin{proof}
 We have $(N_{g}:_{R_{e}}M_{g})$ is an $e$-$2$-absorbing primary
ideal of $R_{e}$ by \textbf{Theorem 2.6}. Therefore, $%
Gr((N_{g}:_{R_{e}}M_{g}))$ is an $e$-$2$-absorbing ideal of $R_{e}$ by
\textbf{Theorem 2.7}.
\end{proof}
%---------------------------------------------------------------------------

%---------------------------------------------------------------------------
Let $R$ be a $G$-graded ring, $M$ a graded $R$-module, $N=\oplus _{g\in
G}N_{g}$ a graded submodule of $M$ and $g\in G.$ An $R_{e}$-module $M_{g}$
is said to be a $g$-multiplication module if for every submodule $N_{g}$ of
\thinspace $M_{g}$ there exists an ideal $I_{e}$ of $R_{e}$ such that $%
N_{g}=I_{e}M_{g}$, (see \cite{21}).
%-----------------------------corollary 2-9----------------------------------------------
\begin{corollary}
 Let $R$ be a $G$-graded ring, $M$ a
graded $R$-module, $N=\oplus _{g\in G}N_{g}$ a graded submodule of $M$ and $%
g\in G.$ If $M_{g}$ is a $g$-multiplication $R_{e}$-module and $N_{g}$ is a $%
g$-$G2$-absorbing submodule of $M_{g}$ such that $%
Gr((N_{g}:_{R_{e}}M_{g}))=(N_{g}:_{R_{e}}M_{g})$, then $N_{g}$ is a $g$-$2$%
-absorbing submodule of $M_{g}$.
\end{corollary}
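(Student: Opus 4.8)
The plan is to unwind the definitions and use the $g$-multiplication hypothesis together with the radical condition on $(N_{g}:_{R_{e}}M_{g})$ to upgrade the exponentiated membership conditions of the $g$-$G2$-absorbing property to the plain membership conditions required of a $g$-$2$-absorbing submodule. The condition $N_{g}\neq M_{g}$ is inherited directly from the $g$-$G2$-absorbing hypothesis, so I would take arbitrary $r_{e},s_{e}\in R_{e}$ and $m_{g}\in M_{g}$ with $r_{e}s_{e}m_{g}\in N_{g}$ and apply the $g$-$G2$-absorbing property to obtain one of three cases: $r_{e}^{k_{1}}m_{g}\in N_{g}$, or $s_{e}^{k_{2}}m_{g}\in N_{g}$, or $r_{e}s_{e}\in (N_{g}:_{R_{e}}M_{g})$. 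The third case is already the third alternative in the definition of a $g$-$2$-absorbing submodule, so the real work is to improve the first two.

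By symmetry it suffices to treat $r_{e}^{k_{1}}m_{g}\in N_{g}$ and deduce $r_{e}m_{g}\in N_{g}$. Since $M_{g}$ is a $g$-multiplication $R_{e}$-module, the cyclic submodule $R_{e}m_{g}$ can be written as $R_{e}m_{g}=I_{e}M_{g}$ for some ideal $I_{e}$ of $R_{e}$. Because $N_{g}$ is an $R_{e}$-submodule, $r_{e}^{k_{1}}m_{g}\in N_{g}$ gives $r_{e}^{k_{1}}R_{e}m_{g}\subseteq N_{g}$; substituting $R_{e}m_{g}=I_{e}M_{g}$ yields $r_{e}^{k_{1}}I_{e}M_{g}\subseteq N_{g}$, that is, $r_{e}^{k_{1}}I_{e}\subseteq (N_{g}:_{R_{e}}M_{g})$.

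Writing $P:=(N_{g}:_{R_{e}}M_{g})$, the hypothesis $Gr(P)=P$ says that $P$ is a radical ideal, and this is what lets me strip off the exponent. For each $i\in I_{e}$ we have $r_{e}^{k_{1}}i\in P$, hence $(r_{e}i)^{k_{1}}=(r_{e}^{k_{1}}i)\,i^{k_{1}-1}\in P$ since $P$ is an ideal; as $P$ is radical this forces $r_{e}i\in P$. Thus $r_{e}I_{e}\subseteq P$, so $r_{e}R_{e}m_{g}=r_{e}I_{e}M_{g}\subseteq PM_{g}\subseteq N_{g}$, and in particular $r_{e}m_{g}\in N_{g}$. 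The identical argument applied to $s_{e}$ settles the case $s_{e}^{k_{2}}m_{g}\in N_{g}$, and this exhausts all possibilities, showing that $N_{g}$ is a $g$-$2$-absorbing submodule of $M_{g}$.

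I expect the third paragraph to be the crux. The two hypotheses play complementary roles that must be combined: the $g$-multiplication property is exactly what converts the element-level statement $r_{e}^{k_{1}}m_{g}\in N_{g}$ into the ideal-level containment $r_{e}^{k_{1}}I_{e}\subseteq P$, while the radical condition $Gr(P)=P$ is exactly what permits the descent from $(r_{e}i)^{k_{1}}\in P$ to $r_{e}i\in P$. Dropping either assumption breaks the removal of the power, so the key observation is that the two combine precisely to reduce the generalized condition to the ordinary one.
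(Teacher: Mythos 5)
Your proof is correct, but it takes a genuinely different route from the paper's. The paper deduces the corollary from its earlier machinery: by its Theorem 2.6, $(N_{g}:_{R_{e}}M_{g})$ is an $e$-$2$-absorbing primary ideal of $R_{e}$; then Theorem 2.7 and Corollary 2.8, combined with the hypothesis $Gr((N_{g}:_{R_{e}}M_{g}))=(N_{g}:_{R_{e}}M_{g})$, show that $(N_{g}:_{R_{e}}M_{g})$ is in fact an $e$-$2$-absorbing \emph{ideal}; only then does it take $r_{e}s_{e}m_{g}\in N_{g}$, use the multiplication hypothesis to pass to $r_{e}s_{e}(R_{e}m_{g}:_{R_{e}}M_{g})\subseteq (N_{g}:_{R_{e}}M_{g})$, apply the $2$-absorbing property of this ideal to the ideal $(R_{e}m_{g}:_{R_{e}}M_{g})$, and translate back via $R_{e}m_{g}=(R_{e}m_{g}:_{R_{e}}M_{g})M_{g}$. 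You instead apply the $g$-$G2$-absorbing definition directly to the triple $r_{e},s_{e},m_{g}$, and in the two power cases you strip the exponent by hand: writing $R_{e}m_{g}=I_{e}M_{g}$ and $P=(N_{g}:_{R_{e}}M_{g})$, the containment $r_{e}^{k_{1}}I_{e}\subseteq P$ gives $(r_{e}i)^{k_{1}}=(r_{e}^{k_{1}}i)\,i^{k_{1}-1}\in P$ for each $i\in I_{e}$, radicality of $P$ gives $r_{e}i\in P$, and hence $r_{e}m_{g}\in r_{e}I_{e}M_{g}\subseteq PM_{g}\subseteq N_{g}$. Your argument is self-contained and more elementary, and it isolates exactly where each hypothesis enters (multiplication converts element data into ideal data, radicality removes powers); the paper's route, by contrast, recycles its structural results and yields as a by-product the stronger intermediate fact that $(N_{g}:_{R_{e}}M_{g})$ is an $e$-$2$-absorbing ideal. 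A further point in your favor: the paper's key step, applying the $2$-absorbing property of an ideal to a product with a whole ideal $(R_{e}m_{g}:_{R_{e}}M_{g})$ rather than with a single element, implicitly requires an ideal-product lemma (an analogue for $2$-absorbing ideals of its Lemma 3.1) that is never stated, whereas your elementwise argument needs no such step.
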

\begin{proof}
 By \textbf{Theorem 2.6} we have $(N_{g}:_{R_{e}}M_{g})$ is an $e$-$2
$-absorbing primary ideal of $R_{e}$. Hence, $%
(N_{g}:_{R_{e}}M_{g})=Gr((N_{g}:_{R_{e}}M_{g}))$ is an $e$-$2$-absorbing
ideal of $R_{e}$ by \textbf{Corollary 2.8}. Now, let $r_{e},s_{e}\in R_{e}$
and $m_{g}\in M_{g}$ such that $r_{e}s_{e}m_{g}\in N_{g}$. Hence, $%
r_{e}s_{e}R_{e}m_{g}\subseteq N_{g}$ and as $M_{g}$ is a $g$-multiplication
module we get $r_{e}s_{e}(R_{e}m_{g}:_{R_{e}}M_{g})M_{g}\subseteq N_{g}$.
Thus, $r_{e}s_{e}(R_{e}m_{g}:_{R_{e}}M_{g})\subseteq (N_{g}:_{R_{e}}M_{g})$,
which yields that either $r_{e}s_{e}\in (N_{g}:_{R_{e}}M_{g})$ or $%
r_{e}(R_{e}m_{g}:_{R_{e}}M_{g})\subseteq (N_{g}:_{R_{e}}M_{g})$ or $%
s_{e}(R_{e}m_{g}:_{R_{e}}M_{g})\subseteq (N_{g}:_{R_{e}}M_{g})$. So, either $%
r_{e}s_{e}\in (N_{g}:_{R_{e}}M_{g})$ or $%
r_{e}R_{e}m_{g}=r_{e}(R_{e}m_{g}:_{R_{e}}M_{g})M_{g}\subseteq N_{g}$ or $%
s_{e}R_{e}m_{g}=s_{e}(R_{e}m_{g}:_{R_{e}}M_{g})M_{g}\subseteq N_{g}$, then
either $r_{e}s_{e}\in (N_{g}:_{R_{e}}M_{g})$ or $r_{e}m_{g}\in N_{g}$ or $%
s_{e}m_{g}\in N_{g}$. Therefore,\ $N_{g}$ is a $g$-$2$-absorbing submodule
of $M_{g}$.
\end{proof}
%----------------------------------------------------------------------------

Recall that from \cite{10} that a graded zero-divisor on a graded $R$-module $M$ is an element $r_{g}\in h(R)$ for which there exists $m_{h}\in
h(M)$ such that $m_{h}\not=0$ but $r_{g}m_{h}=0$. The set of all graded zero-divisors on $M$ is denoted by $G$-$Zdv_{R}(M)$.

%-----------------------------theorem 2-10----------------------------------------------
The following result studies the behavior of graded $G2$-absorbing
submodules under localization.

\begin{theorem}
 Let $R$ be a $G$-graded ring, $M$ a
graded $R$-module and $S\subseteq h(R)$ be a multiplicatively closed subset
of $R.$
\begin{enumerate}[\upshape (i)]
   \item  If $N$ is a graded $G2$-absorbing submodule of $M$ such that $%
(N:_{R}M)\cap S=\emptyset $, then $S^{-1}N$ is a graded $G2$-absorbing
submodule of $S^{-1}M$.

    \item If $S^{-1}N$ is a graded $G2$-absorbing submodule of $S^{-1}M$ such
that $G$-$Zdv_{R}(M/N)\cap S=\emptyset $, then $N$ is a graded $G2$%
-absorbing submodule of $M$.
   \end{enumerate}

\end{theorem}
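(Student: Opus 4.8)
The plan is to verify the defining condition directly on homogeneous elements of the localizations, transferring membership between $M$ and $S^{-1}M$ by clearing denominators. Since $S\subseteq h(R)$, the localization $S^{-1}R$ is again $G$-graded and $S^{-1}M$ is a graded $S^{-1}R$-module whose homogeneous elements all have the form $r_h/s$ with $r_h\in h(R),\,s\in S$ (respectively $m_\lambda/s$ with $m_\lambda\in h(M)$); this is essentially the only structural input about graded localization I will need.

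For (i), I would first record that $S^{-1}N$ is proper, which is exactly where the hypothesis $(N:_RM)\cap S=\emptyset$ enters: if $S^{-1}N=S^{-1}M$ then $1/1\in(S^{-1}N:_{S^{-1}R}S^{-1}M)$, and the standard identification of this colon with $S^{-1}(N:_RM)$ would produce an element of $(N:_RM)\cap S$. Next I would take homogeneous $r_h/s_1,\,s_\alpha/s_2\in h(S^{-1}R)$ and $m_\lambda/s_3\in h(S^{-1}M)$ whose product lies in $S^{-1}N$; clearing denominators yields some $u\in S$ with $(ur_h)\,s_\alpha\,m_\lambda\in N$, and since $ur_h\in h(R)$ the graded $G2$-absorbing property of $N$ applies. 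If $(ur_h)^{k_1}m_\lambda\in N$, then $u^{k_1}r_h^{k_1}m_\lambda\in N$ gives $(r_h/s_1)^{k_1}\in(S^{-1}N:_{S^{-1}R}m_\lambda/s_3)$ after enlarging the denominator by $u^{k_1}$; the case for $s_\alpha$ is symmetric; and if $(ur_h)s_\alpha M\subseteq N$, then $(r_h/s_1)(s_\alpha/s_2)\in(S^{-1}N:_{S^{-1}R}S^{-1}M)$, checked on an arbitrary $m/s$. This exhausts the definition.

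For (ii), $N$ is proper since $N=M$ would force $S^{-1}N=S^{-1}M$. Given $r_h,s_\alpha\in h(R)$ and $m_\lambda\in h(M)$ with $r_h s_\alpha m_\lambda\in N$, I would pass to the localization via $(r_h/1)(s_\alpha/1)(m_\lambda/1)\in S^{-1}N$ and apply the $G2$-absorbing property of $S^{-1}N$. Each resulting conclusion, after clearing denominators, takes the form $u\,x\in N$ with $u\in S$ and $x$ one of the homogeneous elements $r_h^{k_1}m_\lambda$, $s_\alpha^{k_2}m_\lambda$, or $r_h s_\alpha m$ (for homogeneous $m\in h(M)$). Passing to $M/N$, the image $\bar x$ is homogeneous and $u\bar x=0$; since $u\notin G\text{-}Zdv_R(M/N)$ by hypothesis, $\bar x=0$, that is $x\in N$. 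In the three cases this reads $r_h^{k_1}\in(N:_Rm_\lambda)$, $s_\alpha^{k_2}\in(N:_Rm_\lambda)$, or, letting $m$ range over $h(M)$, $r_h s_\alpha\in(N:_RM)$, as required.

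The main obstacle is this cancellation step in (ii), which is the only place the zero-divisor hypothesis does real work: one must check that the cancelled element $\bar x$ is genuinely homogeneous, so that the definition of $G\text{-}Zdv_R(M/N)$ applies, and that the accumulated factor $u$, built from elements of $S$ and the homogeneous denominators, again lies in $S$. The remaining care is purely book-keeping of degrees as $ur_h$, $r_h^{k_1}$, and the like move through the gradings of $S^{-1}R$ and $S^{-1}M$; once those are pinned down, both parts reduce to the routine three-way split dictated by the definition.
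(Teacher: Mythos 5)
Your proposal is correct, and at its core it is the same clearing-of-denominators verification that the paper gives; still, two points of divergence are worth recording. In (i), after finding $u\in S$ with $ur_hs_{\alpha}m_{\lambda}\in N$, you apply the $G2$-absorbing hypothesis to the factorization $(ur_h)\cdot s_{\alpha}\cdot m_{\lambda}$, absorbing $u$ into the ring factor, whereas the paper applies it to $r_g\cdot s_h\cdot(s_4m_{\lambda})$, absorbing the auxiliary element of $S$ into the module element; both are legitimate, and the resulting fractions land in $S^{-1}N$ either way. More substantively, in (ii) the paper disposes of the third case by invoking the identity $(S^{-1}N:_{S^{-1}R}S^{-1}M)=S^{-1}(N:_{R}M)$ and reading off $r_gs_h\in(N:_{R}M)$ directly; the inclusion $(S^{-1}N:_{S^{-1}R}S^{-1}M)\subseteq S^{-1}(N:_{R}M)$ is not automatic (in general it requires $M$ to be finitely generated), and even granted it, extracting $r_gs_h\in(N:_{R}M)$ from $\frac{r_gs_h}{1}\in S^{-1}(N:_{R}M)$ needs exactly the zero-divisor cancellation you make explicit. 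Your route --- testing $r_hs_{\alpha}m\in N$ for each homogeneous $m$ separately and cancelling the element of $S$ via $G$-$Zdv_R(M/N)\cap S=\emptyset$ --- avoids that identity entirely, so your treatment of this case is the more careful one. The one weakness you share with the paper is the properness step in (i): concluding $S^{-1}N\neq S^{-1}M$ from $(N:_{R}M)\cap S=\emptyset$ again rests on the colon identification (you say so explicitly, the paper merely asserts properness), so neither argument is airtight there for non-finitely-generated $M$; everything else checks out.
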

\begin{proof}
$(i)$ Since $(N:_{R}M)\cap S=\emptyset ,$\ $S^{-1}N$ is a proper
graded submodule of $S^{-1}M.$ Now, let $\frac{r_{g}}{s_{1}},\frac{s_{h}}{%
s_{2}}\in h(S^{-1}R)$ and $\frac{m_{\lambda }}{s_{3}}\in h(S^{-1}M)$ such
that $\frac{r_{g}}{s_{1}}\frac{s_{h}}{s_{2}}\frac{m_{\lambda }}{s_{3}}=\frac{%
r_{g}s_{h}m_{\lambda }}{s_{1}s_{2}s_{3}}\in S^{-1}N.$ Hence, there exists $%
s_{4}\in S$ such that $s_{4}r_{g}s_{h}m_{\lambda }\in N$ which yields that
either $r_{g}^{k_{1}}s_{4}m_{\lambda }\in N$ for some $k_{1}\in
%TCIMACRO{\U{2124} }%
%BeginExpansion
\mathbb{Z}
%EndExpansion
^{+}$ or $s_{h}^{k_{2}}s_{4}m_{\lambda }\in N$ for some $k_{2}\in
%TCIMACRO{\U{2124} }%
%BeginExpansion
\mathbb{Z}
%EndExpansion
^{+}$ or $r_{g}s_{h}\in (N:_{R}M)$ as $N$ is a graded $G2$-absorbing
submodule of $M$. Thus, either $(\frac{r_{g}}{s_{1}})^{k_{1}}(\frac{%
m_{\lambda }}{s_{3}})=(\frac{r_{g}^{k_{1}}s_{4}m_{\lambda }}{%
s_{1}^{k_{1}}s_{4}s_{3}})\in S^{-1}N$ or $(\frac{s_{h}}{s_{2}})^{k_{2}}(%
\frac{m_{\lambda }}{s_{3}})=(\frac{s_{h}^{k_{2}}s_{4}m_{\lambda }}{%
s_{2}^{k_{2}}s_{4}s_{3}})\in S^{-1}N$ or $\frac{r_{g}}{s_{1}}\frac{s_{h}}{%
s_{2}}=\frac{r_{g}s_{h}}{s_{1}s_{2}}\in S^{-1}(N:_{R}M)\subseteq
(S^{-1}N:_{S^{-1}R}S^{-1}M)$. Therefore, $S^{-1}N$ is a graded $G2$%
-absorbing submodule of $S^{-1}M$.

$(ii)$ Let $r_{g},s_{h}\in h(R)$ and $m_{\lambda }\in h(M)$ such that $%
r_{g}s_{h}m_{\lambda }\in N$, so$\frac{r_{g}}{1}\frac{s_{h}}{1}\frac{%
m_{\lambda }}{1}=\frac{r_{g}s_{h}m_{\lambda }}{1}\in S^{-1}N$. Hence, either
$(\frac{r_{g}}{1})^{k_{1}}(\frac{m_{\lambda }}{1})\in S^{-1}N$ for some $%
k_{1}\in
%TCIMACRO{\U{2124} }%
%BeginExpansion
\mathbb{Z}
%EndExpansion
^{+}$ or $(\frac{s_{h}}{1})^{k_{2}}(\frac{m_{\lambda }}{1})\in S^{-1}N$ for
some $k_{2}\in
%TCIMACRO{\U{2124} }%
%BeginExpansion
\mathbb{Z}
%EndExpansion
^{+}$ or $\frac{r_{g}s_{h}}{1}\in (S^{-1}N:_{S^{-1}R}S^{-1}M)$ as $S^{-1}N$
is a graded $G2$-absorbing submodule of $S^{-1}M$. If $\frac{r_{g}s_{h}}{1}%
\in (S^{-1}N:_{S^{-1}R}S^{-1}M)=S^{-1}(N:_{R}M)$, then $r_{g}s_{h}\in
(N:_{R}M)$ and we get the result. Otherwise, if $\frac{r_{g}^{k_{1}}m_{%
\lambda }}{1}\in S^{-1}N$ there exists $t\in S$ such that $%
tr_{g}^{k_{1}}m_{\lambda }\in N$ which yields that $r_{g}^{k_{1}}m_{\lambda
}\in N$ since $G$-$Zdv_{R}(M/N)\cap S=\emptyset $. Similarly, if $\frac{%
s_{h}^{k_{2}}m_{\lambda }}{1}\in S^{-1}N,$ we get $s_{h}^{k_{2}}m_{\lambda
}\in N.$ Therefore, $N$ is a graded $G2$-absorbing submodule of $M$.
\end{proof}
%------------------------------------------------------------------------
%                   sec   333333333333333333333333333333333
%
%-------------------------------------------------------------------------------------------
 \section{Graded $G2$-absorbing submodules over Gr-Noetherian ring}

Let $R$ be a $G$-graded ring, $M$ a graded $R$-module, $N=\oplus _{g\in
G}N_{g}$ a graded submodule of $M$ and $g\in G.$ We say that $N_{g}$ is a $g$%
-idempotent submodule of an $R_{e}$-module $M_{g}$ if $%
N_{g}=(N_{g}:_{R_{e}}M_{g})^{2}M_{g}.$ Also, $M_{g}$ is called a fully $g$%
-idempotent if every submodule is a $g$-idempotent. It is easy to see that
every fully $g$-idempotent module is a $g$-multiplication.

 A $G$-graded ring $R$ is called $gr$-Noetherian if it satisfies the
ascending chain condition on graded ideals of $R$. Equivalently, $R$ is
$gr$-Noetherian if and only if every graded ideal of $R$ is finitely generated (see \cite{27}).

%-----------------------------Lemma 3-1----------------------------------------------
\begin{lemma}
Let $R$ be a $G$-graded ring, $I=\oplus _{g\in G}I_{g}$
and $J=\oplus _{g\in G}J_{g}$ be two graded ideals of $R.$ If $I_{e}$ is an $%
e$-$2$-absorbing primary ideal of $R_{e}\ $and $r_{e},s_{e}\in R_{e}$ such
that $r_{e}s_{e}J_{e}\subseteq I_{e}$, then either $r_{e}s_{e}\in I_{e}$ or $%
r_{e}J_{e}\subseteq Gr(I_{e})$ or $s_{e}J_{e}\subseteq Gr(I_{e})$.

\end{lemma}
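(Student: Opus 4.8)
The plan is to assume that the first conclusion fails, i.e.\ $r_{e}s_{e}\not\in I_{e}$, and to deduce one of the remaining two alternatives. The starting observation is that the hypothesis $r_{e}s_{e}J_{e}\subseteq I_{e}$ means precisely that $r_{e}s_{e}t_{e}\in I_{e}$ for every $t_{e}\in J_{e}$. Applying the definition of an $e$-$2$-absorbing primary ideal to each such product, and discarding the option $r_{e}s_{e}\in I_{e}$ (which we have excluded), I obtain that for every $t_{e}\in J_{e}$ either $r_{e}t_{e}\in Gr(I_{e})$ or $s_{e}t_{e}\in Gr(I_{e})$.

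Next I would organise this pointwise dichotomy into a covering of $J_{e}$ by two subsets. Put $A=\{t_{e}\in J_{e}:r_{e}t_{e}\in Gr(I_{e})\}$ and $B=\{t_{e}\in J_{e}:s_{e}t_{e}\in Gr(I_{e})\}$, so that the previous step gives $J_{e}=A\cup B$. The key structural point is that $A$ and $B$ are additive subgroups of $J_{e}$: indeed $A$ is the preimage of $Gr(I_{e})$ under the $R_{e}$-linear map $t_{e}\mapsto r_{e}t_{e}$, and since $Gr(I_{e})$ is an ideal of $R_{e}$ (hence an additive subgroup), its preimage is an additive subgroup; the same reasoning applies to $B$.

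Finally I would invoke the elementary fact that a group is never the union of two proper subgroups. Since $J_{e}=A\cup B$ with $A,B$ subgroups, we must have $J_{e}=A$ or $J_{e}=B$; that is, $r_{e}J_{e}\subseteq Gr(I_{e})$ or $s_{e}J_{e}\subseteq Gr(I_{e})$, which together with the excluded case $r_{e}s_{e}\in I_{e}$ is exactly the trichotomy claimed. The only real obstacle is recognising that the two sets arising from the pointwise alternative are genuine subgroups, so that the group-covering argument applies; once $Gr(I_{e})$ is known to be an ideal this is immediate, and no delicate computation remains.
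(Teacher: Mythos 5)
Your proposal is correct and is essentially the paper's own argument in abstract packaging: the paper fixes $j_{1_{e}}\in J_{e}$ with $r_{e}j_{1_{e}}\not\in Gr(I_{e})$ and $j_{2_{e}}\in J_{e}$ with $s_{e}j_{2_{e}}\not\in Gr(I_{e})$, uses the same pointwise dichotomy (after discarding $r_{e}s_{e}\in I_{e}$) to get $s_{e}j_{1_{e}},r_{e}j_{2_{e}}\in Gr(I_{e})$, and then derives a contradiction from the element $j_{1_{e}}+j_{2_{e}}$ --- which is precisely the standard proof of the fact you invoke, that a group cannot be the union of two proper subgroups, specialized to your sets $A$ and $B$. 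So the two proofs coincide step for step; yours just names the underlying group-theoretic principle instead of inlining its proof.
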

\begin{proof}
 Assume that neither $r_{e}s_{e}\in I_{e}$ nor $r_{e}J_{e}\subseteq
Gr(I_{e})$ nor $s_{e}J_{e}\subseteq Gr(I_{e})$. Then there exist $%
j_{1_{e}},j_{2_{e}}\in J_{e}$ with $r_{e}j_{1_{e}}\not\in Gr(I_{e})$ and $%
s_{e}j_{2_{e}}\not\in Gr(I_{e})$. Now, since $r_{e}s_{e}j_{1_{e}}\in I_{e}$
and $r_{e}s_{e}j_{2_{e}}\in I_{e},$ $s_{e}j_{1_{e}}\in Gr(I_{e})$ and $%
r_{e}j_{2_{e}}\in Gr(I_{e})$ as $I_{e}$ is an $e$-$2$-absorbing primary
ideal of $R_{e}$. Now, $r_{e}s_{e}(j_{1_{e}}+j_{2_{e}})\in I_{e}$ and $%
r_{e}s_{e}\not\in I_{e}$, we have either $r_{e}(j_{1_{e}}+j_{2_{e}})\in
Gr(I_{e})$ or $s_{e}(j_{1_{e}}+j_{2_{e}})\in Gr(I_{e})$ and then either $%
r_{e}j_{1_{e}}\in Gr(I_{e})$ or $s_{e}j_{2_{e}}\in Gr(I_{e}),$ a
contradiction. Therefore, either $r_{e}s_{e}\in I_{e}$ or $%
r_{e}J_{e}\subseteq Gr(I_{e})$ or $s_{e}J_{e}\subseteq Gr(I_{e})$.
\end{proof}
%---------------------------------------------------------------------------

%-----------------------------theorem 3-2----------------------------------------------
\begin{theorem}
Let $R$ be a $G$-graded $gr$-Noetherian ring, $M$ a
graded $R$-module, $N=\oplus _{g\in G}N_{g}$ a graded submodule of $M$ and $%
g\in G$. If $M_{g}$ is a fully $g$-idempotent $R_{e}$-module and $%
(N_{g}:_{R_{e}}M_{g})$ is an $e$-$2$-absorbing primary ideal of $R_{e}$,
then $N_{g}$ is a $g$-$G2$-absorbing submodule of an $R_{e}$-module $M_{g}$.
\end{theorem}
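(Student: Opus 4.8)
The plan is to verify the defining condition of a $g$-$G2$-absorbing submodule directly, feeding the data into \textbf{Lemma 3.1} and then exploiting full $g$-idempotency to resolve the two ``radical'' alternatives. First I would record two consequences of the hypotheses to be used throughout. Since $M_{g}$ is fully $g$-idempotent it is $g$-multiplication, so every submodule $K_{g}$ satisfies $K_{g}=(K_{g}:_{R_{e}}M_{g})M_{g}$; applying this to $N_{g}$ gives $N_{g}=I_{e}M_{g}$ with $I_{e}:=(N_{g}:_{R_{e}}M_{g})$, and since $I_{e}$ is $e$-$2$-absorbing primary it is proper, whence $N_{g}\neq M_{g}$ (as $N_{g}=M_{g}$ would force $I_{e}=R_{e}$). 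Applying $g$-multiplication to the cyclic submodule $R_{e}m_{g}$ gives $R_{e}m_{g}=L_{e}M_{g}$ with $L_{e}:=(R_{e}m_{g}:_{R_{e}}M_{g})$, while full idempotency gives $R_{e}m_{g}=L_{e}^{2}M_{g}$; combining these and iterating yields $R_{e}m_{g}=L_{e}^{n}M_{g}$ for every $n\geq 1$.

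Now take $r_{e},s_{e}\in R_{e}$ and $m_{g}\in M_{g}$ with $r_{e}s_{e}m_{g}\in N_{g}$. Since $N_{g}$ is an $R_{e}$-submodule, $r_{e}s_{e}R_{e}m_{g}=R_{e}(r_{e}s_{e}m_{g})\subseteq N_{g}$, and substituting $R_{e}m_{g}=L_{e}M_{g}$ gives $r_{e}s_{e}L_{e}M_{g}\subseteq N_{g}$, i.e. $r_{e}s_{e}L_{e}\subseteq I_{e}$. As $L_{e}$ is the $e$-component of the graded ideal $RL_{e}$ of $R$ (indeed $(RL_{e})_{e}=L_{e}$) and $I_{e}$ is $e$-$2$-absorbing primary, \textbf{Lemma 3.1} applies and yields one of: $r_{e}s_{e}\in I_{e}$, or $r_{e}L_{e}\subseteq Gr(I_{e})$, or $s_{e}L_{e}\subseteq Gr(I_{e})$. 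The first alternative is exactly $r_{e}s_{e}\in (N_{g}:_{R_{e}}M_{g})$, the third of the required conclusions.

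The heart of the argument is converting the radical alternatives into the power conditions, and this is where the $gr$-Noetherian hypothesis enters. It forces $R_{e}$ to be Noetherian (an ascending chain of ideals of $R_{e}$ extends to graded ideals of $R$ whose $e$-components recover the original ideals, so the chain stabilizes), hence $Gr(I_{e})$ is finitely generated and $Gr(I_{e})^{n}\subseteq I_{e}$ for some $n\in \mathbb{Z}^{+}$. Suppose $r_{e}L_{e}\subseteq Gr(I_{e})$. Using $m_{g}\in R_{e}m_{g}=L_{e}^{n}M_{g}$, I would write $m_{g}=\sum_{i}a_{i1}\cdots a_{in}w_{i}$ as a finite sum with each $a_{ij}\in L_{e}$ and $w_{i}\in M_{g}$. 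Multiplying by $r_{e}^{n}$ and distributing one factor of $r_{e}$ onto each $a_{ij}$ gives $r_{e}^{n}m_{g}=\sum_{i}(r_{e}a_{i1})\cdots(r_{e}a_{in})w_{i}$; since each $r_{e}a_{ij}\in Gr(I_{e})$, the coefficient of each $w_{i}$ lies in $Gr(I_{e})^{n}\subseteq I_{e}$, so $r_{e}^{n}m_{g}\in I_{e}M_{g}=N_{g}$, i.e. $r_{e}^{n}\in (N_{g}:_{R_{e}}m_{g})$. The alternative $s_{e}L_{e}\subseteq Gr(I_{e})$ is symmetric and gives $s_{e}^{n}\in (N_{g}:_{R_{e}}m_{g})$. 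These three outcomes are precisely the options in the definition, so $N_{g}$ is $g$-$G2$-absorbing.

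The step I expect to be the main obstacle is the last one. It is essential that the exponent $n$ be fixed uniformly (via $Gr(I_{e})^{n}\subseteq I_{e}$) \emph{before} expanding $m_{g}\in L_{e}^{n}M_{g}$, because the $L_{e}$-factors produced in that expansion depend on $n$; this circularity is exactly what the $gr$-Noetherian assumption (through Noetherianity of $R_{e}$) breaks, and it explains why full $g$-idempotency—which supplies the arbitrarily deep factorizations $m_{g}\in L_{e}^{n}M_{g}$—is the natural companion hypothesis.
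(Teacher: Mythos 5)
Your proof is correct and takes essentially the same route as the paper's: use the multiplication property coming from full $g$-idempotency to turn the hypothesis into $r_{e}s_{e}(K_{g}:_{R_{e}}M_{g})\subseteq (N_{g}:_{R_{e}}M_{g})$, apply \textbf{Lemma 3.1}, and then use Noetherianity of $R_{e}$ together with $K_{g}=(K_{g}:_{R_{e}}M_{g})^{n}M_{g}$ to convert the radical alternatives into the required power conditions. The only cosmetic differences are that the paper argues with an arbitrary submodule $K_{g}$ where you specialize to $K_{g}=R_{e}m_{g}$, and it raises the finitely generated ideal $r_{e}(K_{g}:_{R_{e}}M_{g})$ to a power rather than $Gr((N_{g}:_{R_{e}}M_{g}))$ itself.
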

\begin{proof}
Let $r_{e},s_{e}\in R_{e}$ and $K=\oplus _{h\in G}K_{h}$ be a
graded submodule of $M$\ such that $r_{e}s_{e}K_{g}\subseteq N_{g}$. Now,
since $M_{g}$ is a fully $g$-idempotent, $M_{g}$ is a $g$-multiplication
module, so we get $r_{e}s_{e}(K_{g}:_{R_{e}}M_{g})M_{g}\subseteq N_{g}$ and
then $r_{e}s_{e}(K_{g}:_{R_{e}}M_{g})\subseteq (N_{g}:_{R_{e}}M_{g})$.
Hence, \textbf{by Lemma 3.1,} either $r_{e}(K_{g}:_{R_{e}}M_{g})\subseteq
Gr((N_{g}:_{R_{e}}M_{g}))$ or $s_{e}(K_{g}:_{R_{e}}M_{g})\subseteq
Gr((N_{g}:_{R_{e}}M_{g}))$ or $r_{e}s_{e}\in (N_{g}:_{R_{e}}M_{g})$ as $%
(N_{g}:_{R_{e}}M_{g})$ is an $e$-$2$-absorbing primary ideal of $R_{e}$. If $%
r_{e}s_{e}\in (N_{g}:_{R_{e}}M_{g})$, then we get the result. Now, since $R$
is $gr$-Noetherian, then so $R_{e}.$ Hence, if $r_{e}(K_{g}:_{R_{e}}M_{g})%
\subseteq Gr((N_{g}:_{R_{e}}M_{g}))$, then $%
(r_{e}(K_{g}:_{R_{e}}M_{g}))^{n_{1}}\subseteq (N_{g}:_{R_{e}}M_{g}),$ for
some $n_{1}\in
%TCIMACRO{\U{2124} }%
%BeginExpansion
\mathbb{Z}
%EndExpansion
^{+}$ which follows that $%
r_{e}^{n_{1}}K_{g}=r_{e}^{n_{1}}(K_{g}:_{R_{e}}M_{g})^{n_{1}}M_{g}\subseteq
(N_{g}:_{R_{e}}M_{g})M_{g}=N_{g}$ as $M_{g}$ is a fully $g$-idempotent.
Similarly, if $s_{e}(K_{g}:_{R_{e}}M_{g})\subseteq Gr((N_{g}:_{R_{e}}M_{g})),
$ then $s_{e}^{n_{2}}K_{g}\subseteq N_{g}$, for some $n_{2}\in
%TCIMACRO{\U{2124} }%
%BeginExpansion
\mathbb{Z}
%EndExpansion
^{+}.$ Therefore, $N_{g}$ is a $g$-$G2$-absorbing submodule of $M_{g}$.
\end{proof}
%---------------------------------------------------------------------------
%-----------------------------Example 3-3----------------------------------------------
The following example shows that \textbf{Theorem 3.2} is not true in
general.
\begin{example}
Let $G=%
%TCIMACRO{\U{2124} }%
%BeginExpansion
\mathbb{Z}
%EndExpansion
_{2},$ then $R=%
%TCIMACRO{\U{2124} }%
%BeginExpansion
\mathbb{Z}
%EndExpansion
$ is a $G$-graded ring with $R_{0}=%
%TCIMACRO{\U{2124} }%
%BeginExpansion
\mathbb{Z}
%EndExpansion
$ and $R_{1}=\{0\}.$ Let $M=%
%TCIMACRO{\U{211a} }%
%BeginExpansion
\mathbb{Q}
%EndExpansion
$ be a graded $R$-module with $M_{0}=%
%TCIMACRO{\U{211a} }%
%BeginExpansion
\mathbb{Q}
%EndExpansion
$ and $M_{1}=\{0\}$ where $M_{0}$ is not a fully $0$-idempotent. Now,
consider the graded submodule $N=%
%TCIMACRO{\U{2124} }%
%BeginExpansion
\mathbb{Z}
%EndExpansion
$ of $M.$ Then $N_{0}$ is not a $0$-$G2$-absorbing submodule of $M_{0}$
since $2\cdot 3\cdot \frac{1}{6}\in
%TCIMACRO{\U{2124} }%
%BeginExpansion
\mathbb{Z}
%EndExpansion
$ and neither $2\cdot 3\in (%
%TCIMACRO{\U{2124} }%
%BeginExpansion
\mathbb{Z}
%EndExpansion
:_{%
%TCIMACRO{\U{2124} }%
%BeginExpansion
\mathbb{Z}
%EndExpansion
}%
%TCIMACRO{\U{211a} }%
%BeginExpansion
\mathbb{Q}
%EndExpansion
)=\{0\}$ nor $2\in Gr((%
%TCIMACRO{\U{2124} }%
%BeginExpansion
\mathbb{Z}
%EndExpansion
:_{%
%TCIMACRO{\U{2124} }%
%BeginExpansion
\mathbb{Z}
%EndExpansion
}\frac{1}{6}))$ nor $3\in Gr((%
%TCIMACRO{\U{2124} }%
%BeginExpansion
\mathbb{Z}
%EndExpansion
:_{%
%TCIMACRO{\U{2124} }%
%BeginExpansion
\mathbb{Z}
%EndExpansion
}\frac{1}{6})).$ However, easy computations show that $(%
%TCIMACRO{\U{2124} }%
%BeginExpansion
\mathbb{Z}
%EndExpansion
:_{%
%TCIMACRO{\U{2124} }%
%BeginExpansion
\mathbb{Z}
%EndExpansion
}%
%TCIMACRO{\U{211a} }%
%BeginExpansion
\mathbb{Q}
%EndExpansion
)=\{0\}$ is a $0$-$2$-absorbing primary ideal of $%
%TCIMACRO{\U{2124} }%
%BeginExpansion
\mathbb{Z}
%EndExpansion
.$
\end{example}

%---------------------------------------------------------------------------

%-----------------------------Lemma 3-4----------------------------------------------
\begin{lemma}
Let $R_{i}$ be a $G$-graded ring, $M_{i}$ a
graded $R_{i}$-module, for $i=1,2$ and $g\in G.$ Let $R=R_{1}\times R_{2}$
and $M=M_{1}\times M_{2}$. Then $M_{i_{g}}$ is a fully $g$-idempotent $%
R_{i_{e}}$-module, for $i=1,2$ if and only if $M_{g}$ is a fully $g$%
-idempotent $R_{e}$-module.
\end{lemma}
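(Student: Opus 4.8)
The plan is to reduce everything to the product structure on the degree-$e$ and degree-$g$ components. First I would fix the grading on the product objects: $R=R_1\times R_2$ is $G$-graded by $R_h=R_{1_h}\times R_{2_h}$ and $M=M_1\times M_2$ by $M_h=M_{1_h}\times M_{2_h}$, so that in particular $R_e=R_{1_e}\times R_{2_e}$ and $M_g=M_{1_g}\times M_{2_g}$. From here the grading plays no further role; the entire argument takes place in the $R_e$-module $M_g$ and its two factors.

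The key structural step, which I expect to be the only real content, is that every $R_e$-submodule $N$ of $M_g$ decomposes as $N=N_1\times N_2$ with $N_i$ an $R_{i_e}$-submodule of $M_{i_g}$. This comes from the orthogonal idempotents $\varepsilon_1=(1,0)$ and $\varepsilon_2=(0,1)$ of $R_e$ with $\varepsilon_1+\varepsilon_2=1$: for a submodule $N$ one has $N=\varepsilon_1 N+\varepsilon_2 N$, and $\varepsilon_1 N=N_1\times\{0\}$, $\varepsilon_2 N=\{0\}\times N_2$ for the projections $N_1,N_2$, giving $N=N_1\times N_2$. Conversely any such product is visibly an $R_e$-submodule. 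This yields a bijection between submodules of $M_g$ and pairs $(N_1,N_2)$ of submodules of the factors.

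Next I would compute the two operations in the definition of a $g$-idempotent submodule componentwise. For $N=N_1\times N_2$ one checks directly that $(N:_{R_e}M_g)=I_1\times I_2$, where $I_i=(N_i:_{R_{i_e}}M_{i_g})$. Since multiplication of ideals and the module action in a product ring are componentwise, $(I_1\times I_2)^2=I_1^2\times I_2^2$ and hence $(N:_{R_e}M_g)^2M_g=(I_1^2M_{1_g})\times(I_2^2M_{2_g})$. Comparing with $N=N_1\times N_2$, the equality $N=(N:_{R_e}M_g)^2M_g$ holds if and only if $N_i=I_i^2M_{i_g}$ for both $i$, that is, if and only if each $N_i$ is a $g$-idempotent submodule of $M_{i_g}$.

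Finally I would assemble the equivalence. For the forward direction, assuming each $M_{i_g}$ is fully $g$-idempotent, any submodule $N=N_1\times N_2$ of $M_g$ has both $N_i$ $g$-idempotent, so $N$ is $g$-idempotent by the previous step, whence $M_g$ is fully $g$-idempotent. For the converse, given an arbitrary submodule $N_1$ of $M_{1_g}$, I would apply the hypothesis to the submodule $N_1\times M_{2_g}$ of $M_g$; here the second factor contributes $I_2=R_{2_e}$, so its $g$-idempotence is automatic, and $g$-idempotence of $N_1\times M_{2_g}$ forces exactly $N_1=I_1^2M_{1_g}$. As $N_1$ was arbitrary, $M_{1_g}$ is fully $g$-idempotent, and symmetrically for $M_{2_g}$. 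No step beyond the submodule decomposition requires genuine work, so that decomposition is where I would concentrate the care.
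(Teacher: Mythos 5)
Your proof is correct and follows essentially the same route as the paper: both arguments reduce the statement to the componentwise decomposition of submodules, colon ideals, and their squares over the product ring $R_{e}=R_{1_{e}}\times R_{2_{e}}$. The only differences are cosmetic — you supply the idempotent argument for the decomposition $N=N_{1}\times N_{2}$, which the paper simply asserts, and you test the converse with $N_{1}\times M_{2_{g}}$ where the paper uses $N_{1_{g}}\times \{0\}_{2_{g}}$; both choices work equally well.
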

\begin{proof}
Suppose that $M_{g}$ is a fully $g$-idempotent $R_{e}$-module and $%
N_{1_{g}}$ is a submodule of an $R_{1_{e}}$-module $M_{1_{g}}$. Then $%
N_{g}=N_{1_{g}}\times \{0\}_{2_{g}}$ is a submodule of $M_{g}$. Hence, $%
N_{g}=(N_{g}:_{R_{e}}M_{g})^{2}M_{g}=(N_{1_{g}}:_{R_{1_{e}}}M_{1_{g}})^{2}M_{1_{g}}\times (\{0\}_{2_{g}}:_{R_{2e}}M_{2_{g}})^{2}M_{2_{g}}
$. Thus, $N_{1_{g}}=(N_{1_{g}}:_{R_{1_{e}}}M_{1_{g}})^{2}M_{1_{g}}$.
Therefore, $M_{1_{g}}$ is a fully $g$-idempotent $R_{1_{e}}$-module.
Similarly, $M_{2_{g}}$ is a fully $g$-idempotent $R_{2_{e}}$-module.
Conversely, let $N_{g}$ be a submodule of $M_{g}$. Then $N_{g}=N_{1_{g}}%
\times N_{2_{g}}$ for some submodules $N_{1_{g}}$ of $M_{1_{g}}$ and $%
N_{2_{g}}$ of $M_{2_{g}}$. But $%
N_{i_{g}}=(N_{i_{g}}:_{R_{i_{e}}}M_{i_{g}})^{2}M_{i_{g}},$ for $i=1,2$, so $%
N_{g}=(N_{1_{g}}:_{R_{1_{e}}}M_{1_{g}})^{2}M_{1_{g}}\times
(N_{2_{g}}:_{R_{2_{e}}}M_{2_{g}})^{2}M_{2_{g}}=(N_{g}:_{R_{e}}M_{g})^{2}M_{g}
$. Therefore, $M_{g}$ is a fully $g$-idempotent $R_{e}$-module.
\end{proof}
%---------------------------------------------------------------------------
Let $R$ be a $G$-graded ring and $I=\oplus _{g\in G}I_{g}$  be a graded
ideal of $R.$ Then $I_{e}$ is said to be an $e$-prime ideal of $R_{e}$ if
whenever $r_{e},s_{e}\in R_{e}$ with $r_{e}s_{e}\in I_{e},$ implies either $%
r_{e}\in I_{e}$ or $s_{e}\in I_{e}.$ Also, $I_{e}$ is said to be an $e$%
-primary ideal of $R_{e}$ if whenever $r_{e},s_{e}\in R_{e}$ with $%
r_{e}s_{e}\in I_{e},$ implies either $r_{e}\in I_{e}$ or $s_{e}\in Gr(I_{e})$
(see \cite{30}).

%-----------------------------theorem 3-5----------------------------------------------
\begin{lemma}
Let $R$ be a $G$-graded ring and $I_{i}=\oplus _{g\in
G}I_{i_{g}}$ is a graded ideal of $R$, for $i=1,2$. If $I_{i_{e}}$ is an $e$-%
$P_{i_{e}}$-primary ideal of $R_{e}$ for some $e$-prime ideal $P_{i_{e}}$ of
$R_{e}$, for $i=1,2,$ then $I_{1_{e}}\cap I_{2_{e}}$ is an $e$-$2$-absorbing
primary ideal of $R_{e}$.
\end{lemma}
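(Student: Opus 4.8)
The plan is to work entirely inside $R_e$ and to reduce everything to two facts: the primary property of each $I_{i_e}$ and the primeness of each radical $P_{i_e}$. First I would note that $I_{1_e}\cap I_{2_e}$ is proper (both $I_{i_e}$ are proper) and record the identity $Gr(I_{1_e}\cap I_{2_e})=Gr(I_{1_e})\cap Gr(I_{2_e})=P_{1_e}\cap P_{2_e}$, so that the two radical conditions in the definition of an $e$-$2$-absorbing primary ideal amount to membership in the ideal $P_{1_e}\cap P_{2_e}$. The only property of each $I_{i_e}$ that I will use is that $a_eb_e\in I_{i_e}$ forces $a_e\in I_{i_e}$ or $b_e\in Gr(I_{i_e})=P_{i_e}$.

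Next I would take $r_e,s_e,t_e\in R_e$ with $r_es_et_e\in I_{1_e}\cap I_{2_e}$, assume $r_es_e\notin I_{1_e}\cap I_{2_e}$, and aim to show $r_et_e\in P_{1_e}\cap P_{2_e}$ or $s_et_e\in P_{1_e}\cap P_{2_e}$. Reading the product as $(r_es_e)t_e\in I_{i_e}$ and applying the primary property for each $i$ gives: either $r_es_e\in I_{i_e}$ or $t_e\in P_{i_e}$, for $i=1,2$. Since $r_es_e$ misses $I_{1_e}\cap I_{2_e}$, it misses at least one of the two ideals, and this dichotomy organizes the argument into three exhaustive cases.

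I would then dispatch the cases as follows. If $r_es_e$ lies in neither $I_{1_e}$ nor $I_{2_e}$, then $t_e\in P_{1_e}\cap P_{2_e}$, whence both $r_et_e$ and $s_et_e$ lie in $P_{1_e}\cap P_{2_e}$. In the mixed case $r_es_e\notin I_{1_e}$ but $r_es_e\in I_{2_e}$, the primary property yields $t_e\in P_{1_e}$, so $r_et_e,s_et_e\in P_{1_e}$; meanwhile $r_es_e\in I_{2_e}\subseteq P_{2_e}$ and the primeness of $P_{2_e}$ give $r_e\in P_{2_e}$ or $s_e\in P_{2_e}$, hence $r_et_e\in P_{2_e}$ or $s_et_e\in P_{2_e}$. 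Intersecting the two conclusions produces $r_et_e\in P_{1_e}\cap P_{2_e}$ or $s_et_e\in P_{1_e}\cap P_{2_e}$, as required. The remaining mixed case $r_es_e\in I_{1_e}$, $r_es_e\notin I_{2_e}$ is handled symmetrically with the indices swapped.

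I do not expect a deep obstacle; the step that demands care is the bookkeeping in the two mixed cases, where both hypotheses must be used together: the primary property deposits $t_e$ into one radical, while the primeness of the other radical is exactly what lets the factor $r_es_e\in P_{i_e}$ be split into $r_e\in P_{i_e}$ or $s_e\in P_{i_e}$ and then carried onto $r_et_e$ or $s_et_e$. This is the reason the hypothesis insists each $I_{i_e}$ have a prime radical rather than being merely primary, and I would flag it as the crux of the argument.
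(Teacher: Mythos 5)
Your proof is correct, and it takes a genuinely different route from the paper's. The paper argues by negating the two radical disjuncts: writing $J_e=I_{1_e}\cap I_{2_e}$, it assumes $r_et_e\notin Gr(J_e)$ and $s_et_e\notin Gr(J_e)$ and proves $r_es_e\in J_e$. For this it first invokes, without proof, the fact that $Gr(J_e)=P_{1_e}\cap P_{2_e}$ is an $e$-$2$-absorbing ideal (an intersection of two primes is $2$-absorbing), concludes $r_es_e\in P_{1_e}\cap P_{2_e}$, reduces by symmetry to the situation $r_e\in P_{1_e}\setminus P_{2_e}$ and $s_e\in P_{2_e}\setminus P_{1_e}$, and then shows $r_e\in I_{1_e}$ and $s_e\in I_{2_e}$ by two contradiction arguments that apply the primary property to the factorizations $r_e\cdot(s_et_e)$ and $s_e\cdot(r_et_e)$. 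You instead negate the first disjunct, $r_es_e\notin J_e$, and apply the primary property to the factorization $(r_es_e)\cdot t_e$ in each $I_{i_e}$; this produces an exhaustive three-case analysis according to which of $I_{1_e},I_{2_e}$ misses $r_es_e$, with primeness of $P_{i_e}$ needed only in the two mixed cases. What your route buys is self-containedness: it requires neither the $2$-absorbing property of an intersection of primes nor any symmetry reduction, only the primary property and the primeness of the radicals, exactly as you flag. What the paper's route buys is consonance with the surrounding section, where $2$-absorbing ideals and their minimal primes are the running theme. Both proofs rest on the identity $Gr(I_{1_e}\cap I_{2_e})=Gr(I_{1_e})\cap Gr(I_{2_e})=P_{1_e}\cap P_{2_e}$, which you state and the paper also uses without comment; it is the standard radical-of-intersection fact in the commutative ring $R_e$, so there is no gap on either side.
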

\begin{proof}
 Let $J_{e}=I_{1_{e}}\cap I_{2_{e}}$, then $Gr(J_{e})=P_{1_{e}}\cap
P_{2_{e}}$ is an $e$-$2$-absorbing ideal of $R_{e}$. Now, let $%
r_{e},s_{e},t_{e}\in R_{e}$ such that $r_{e}s_{e}t_{e}\in J_{e}$ and neither
$r_{e}t_{e}\in Gr(J_{e})$ nor $s_{e}t_{e}\in Gr(J_{e})$. Thus, $%
r_{e},s_{e},t_{e}\not\in Gr(J_{e})=P_{1_{e}}\cap P_{2_{e}}$. Since $%
Gr(J_{e})=P_{1_{e}}\cap P_{2_{e}}$ is an $e$-$2$-absorbing ideal of $R_{e}$
and $r_{e}t_{e},s_{e}t_{e}\not\in Gr(J_{e})$, $r_{e}s_{e}\in Gr(J_{e})$.
Now, suppose that $r_{e}\in P_{1_{e}}$, then $r_{e}\not\in Gr(J_{e})$ and $%
r_{e}s_{e}\in Gr(J_{e})\subseteq P_{2_{e}}$ implies $r_{e}\not\in P_{2_{e}}$
and $s_{e}\in P_{2_{e}}$. Thus, $s_{e}\not\in P_{1_{e}}$. If $r_{e}\in
I_{1_{e}}$ and $s_{e}\in I_{2_{e}}$, then $r_{e}s_{e}\in J_{e}$ and we are
done. Now, Suppose that $r_{e}\not\in I_{1_{e}}$. Since $I_{1_{e}}$ is an $e$%
-$P_{1_{e}}$-primary ideal of $R_{e}$ and $r_{e}\not\in I_{1_{e}}$, $%
s_{e}t_{e}\in P_{1_{e}}$. Also, since $s_{e}\in P_{2_{e}}$ and $%
s_{e}t_{e}\in P_{1_{e}}$, $s_{e}t_{e}\in Gr(J_{e})$, a contradiction. So $%
r_{e}\in I_{1_{e}}$. Similarly, suppose that $s_{e}\not\in I_{2_{e}}$. Since
$I_{2_{e}}$ is an $e$-$P_{2_{e}}$-primary ideal of $R_{e}$ and $s_{e}\not\in
I_{2_{e}}$, $r_{e}t_{e}\in P_{2_{e}}$. Also, since $r_{e}t_{e}\in P_{2_{e}}$
and $r_{e}\in P_{1_{e}}$, $r_{e}t_{e}\in Gr(J_{e})$, a contradiction. So, $%
s_{e}\in I_{2_{e}}$ and then $r_{e}s_{e}\in J_{e}$. Therefore, $%
I_{1_{e}}\cap I_{2_{e}}$ is an $e$-$2$-absorbing primary ideal of $R_{e}$
\end{proof}
%---------------------------------------------------------------------------

%-----------------------------theorem3-6----------------------------------------------
\begin{theorem}
Let $R_{1}$, $R_{2}$ be two $G$-graded rings such
that $R=R_{1}\times R_{2}$ and $J=\oplus _{g\in G}J_{g}$ be a proper graded
ideal of $R$. Then the following statements are equivalent.
\begin{enumerate}[\upshape (i)]
   \item $J_{e}$ is an $e$-$2$-absorbing primary ideal of $R_{e}$.

    \item Either $J_{e}=I_{1_{e}}\times \ R_{2_{e}}$ for some $e$-$2$-absorbing
primary ideal $I_{1_{e}}$ of $R_{1_{e}}$ or $J_{e}=R_{1_{e}}\times I_{2_{e}}$
for some $e$-$2$-absorbing primary ideal $I_{2_{e}}$ of $R_{2_{e}}$ or $%
J_{e}=I_{1_{e}}\times I_{2_{e}}$ for some $e$-primary ideal $I_{1_{e}}$ of $%
R_{1_{e}}$ and some $e$-primary ideal $I_{2_{e}}$ of $R_{2_{e}}$.
   \end{enumerate}
  \end{theorem}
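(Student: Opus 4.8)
The plan is to exploit the product structure at the identity component. Since the grading on $R_{1}\times R_{2}$ satisfies $(R_{1}\times R_{2})_{g}=R_{1_{g}}\times R_{2_{g}}$, we have $R_{e}=R_{1_{e}}\times R_{2_{e}}$, and hence every ideal of $R_{e}$ — in particular $J_{e}$ — splits as $J_{e}=I_{1_{e}}\times I_{2_{e}}$ for uniquely determined ideals $I_{1_{e}}$ of $R_{1_{e}}$ and $I_{2_{e}}$ of $R_{2_{e}}$. Throughout I would use the elementary identity $Gr(I_{1_{e}}\times I_{2_{e}})=Gr(I_{1_{e}})\times Gr(I_{2_{e}})$ for radicals in a product ring, together with the observation that a factor $I_{i_{e}}$ is proper if and only if $1\notin Gr(I_{i_{e}})$. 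Because $J_{e}$ is proper, the two factors cannot both be the whole ring, and this splits the argument into exactly the three alternatives in $(ii)$.

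For $(i)\Rightarrow(ii)$ I would first handle the case where one factor is improper. If, say, $I_{2_{e}}=R_{2_{e}}$, so $J_{e}=I_{1_{e}}\times R_{2_{e}}$ with $I_{1_{e}}$ proper, then feeding triples $r_{e}=(a_{1},1)$, $s_{e}=(b_{1},1)$, $t_{e}=(c_{1},1)$ with $a_{1}b_{1}c_{1}\in I_{1_{e}}$ into the $e$-$2$-absorbing primary condition for $J_{e}$ and reading off first coordinates (using $Gr(J_{e})=Gr(I_{1_{e}})\times R_{2_{e}}$) shows $I_{1_{e}}$ is $e$-$2$-absorbing primary, giving Case $1$; the case $I_{1_{e}}=R_{1_{e}}$ is symmetric and gives Case $2$. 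The remaining case, both factors proper, is the core: here I must show both $I_{1_{e}}$ and $I_{2_{e}}$ are $e$-primary. To prove $I_{1_{e}}$ is $e$-primary, take $a_{1}b_{1}\in I_{1_{e}}$ with $b_{1}\notin Gr(I_{1_{e}})$ and apply the condition to the laced triple $r_{e}=(a_{1},1)$, $s_{e}=(1,0)$, $t_{e}=(b_{1},1)$, whose product $(a_{1}b_{1},0)$ lies in $J_{e}$. The disjunct $r_{e}t_{e}=(a_{1}b_{1},1)\in Gr(J_{e})$ is impossible since $1\notin Gr(I_{2_{e}})$ (properness of $I_{2_{e}}$), and the disjunct $s_{e}t_{e}=(b_{1},0)\in Gr(J_{e})$ is impossible since $b_{1}\notin Gr(I_{1_{e}})$; hence the surviving disjunct $r_{e}s_{e}=(a_{1},0)\in J_{e}$ forces $a_{1}\in I_{1_{e}}$. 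A symmetric triple proves $I_{2_{e}}$ is $e$-primary, yielding Case $3$.

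For the converse $(ii)\Rightarrow(i)$, Cases $1$ and $2$ I would verify directly: given $r_{e}s_{e}t_{e}\in J_{e}=I_{1_{e}}\times R_{2_{e}}$, the first coordinates satisfy the $e$-$2$-absorbing primary condition for $I_{1_{e}}$, and each of its three conclusions lifts coordinatewise to the corresponding conclusion for $J_{e}$. In Case $3$, rather than argue by hand, I would write $J_{e}=(I_{1_{e}}\times R_{2_{e}})\cap(R_{1_{e}}\times I_{2_{e}})$ and observe that each factor is $e$-primary with prime radical: indeed $I_{1_{e}}\times R_{2_{e}}$ is $e$-$P_{1_{e}}$-primary with $P_{1_{e}}=Gr(I_{1_{e}})\times R_{2_{e}}$ an $e$-prime ideal (here one uses that $Gr(I_{1_{e}})$ is prime, which holds because $I_{1_{e}}$ is primary), and similarly for the other factor, each being the $e$-component of a suitable graded ideal of $R$. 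Then \textbf{Lemma 3.5} applies verbatim to conclude that the intersection $J_{e}$ is an $e$-$2$-absorbing primary ideal of $R_{e}$.

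The step I expect to be the main obstacle is the both-factors-proper case of $(i)\Rightarrow(ii)$. The difficulty is that a naive triple makes one of the three disjuncts of the $2$-absorbing primary condition hold trivially — for instance any product with a zero second coordinate automatically lies in $I_{2_{e}}$, and any such product automatically lies in $Gr(J_{e})$ — so one learns nothing. The resolution is the precise mixture of the idempotents $(1,0)$, $(0,1)$ with the data $a_{1},b_{1}$ displayed above, engineered so that the two radical disjuncts are each excluded, one by properness of the complementary factor and the other by the hypothesis $b_{1}\notin Gr(I_{1_{e}})$, leaving only the disjunct that delivers $a_{1}\in I_{1_{e}}$. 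Arranging this bookkeeping correctly, and then symmetrically for the second factor, is the only genuinely delicate point.
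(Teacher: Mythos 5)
Your proposal is correct and follows essentially the same route as the paper: the same splitting $J_{e}=I_{1_{e}}\times I_{2_{e}}$, the identical idempotent-laced triple $(a_{1},1)$, $(1,0)$, $(b_{1},1)$ in the both-factors-proper case (the paper writes it as $x_{e},y_{e},z_{e}$ inside a proof by contradiction), and the same appeal to Lemma 3.5 via $J_{e}=(I_{1_{e}}\times R_{2_{e}})\cap(R_{1_{e}}\times I_{2_{e}})$ for the converse. The only (cosmetic) divergence is that you verify the case $I_{2_{e}}=R_{2_{e}}$ by reading off first coordinates directly, where the paper invokes a quotient-ring isomorphism.
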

\begin{proof}
$(i)\Rightarrow (ii)$ Suppose that $J_{e}$ is an $e$-$2$-absorbing primary
ideal of $R_{e}$. So, $J_{e}=I_{1_{e}}\times I_{2_{e}}$ where $I_{1_{e}}$
and $I_{2_{e}}$ are two ideals of $R_{1_{e}}$ and $R_{2_{e}}$, respectively.
Now, assume that $I_{2_{e}}=R_{2_{e}}$, then $I_{1_{e}}\not=R_{1_{e}}$ since
$J_{e}$ is a proper ideal of $R_{e}$. Let $R_{e}^{\prime
}=R_{e}/\{0\}_{1_{e}}\times R_{2_{e}}$, then $J_{e}^{\prime
}=J_{e}/\{0\}_{1_{e}}\times R_{2_{e}}$ is an $e$-$2$-absorbing primary ideal
of $R_{e}^{\prime }$. Now, since $R_{e}\cong R_{e}^{\prime }$ and $%
I_{1_{e}}\cong J_{e}^{\prime }$, $I_{1_{e}}$ is an $e$-$2$-absorbing primary
ideal of $R_{1_{e}}$. Similarly, if $I_{1_{e}}=R_{1_{e}}$, then $I_{2_{e}}$
is an $e$-$2$-absorbing primary ideal of $R_{2_{e}}$. Thus, assume that $%
I_{1_{e}}\not=R_{1_{e}}$, $I_{2_{e}}\not=R_{2_{e}}$ and $I_{1_{e}}$ is not
an $e$-primary ideal of $R_{1_{e}}.$ So, there exist $r_{1_{e}},s_{1_{e}}\in
R_{1_{e}}$ such that $r_{1_{e}}s_{1_{e}}\in I_{1_{e}}$ and neither $%
r_{1_{e}}\in I_{1_{e}}$ nor $s_{1_{e}}\in Gr(I_{1_{e}}).$ Now, let $%
x_{e}=(r_{1_{e}},1_{2_{e}}),$ $y_{e}=(1_{1_{e}},0_{2_{e}})$ and $%
z_{e}=(s_{1_{e}},1_{2_{e}}),$ hence $%
x_{e}y_{e}z_{e}=(r_{1_{e}}s_{1_{e}},0_{2_{e}})\in J_{e}$ but neither $%
x_{e}y_{e}=(r_{1_{e}},0_{2_{e}})\in J_{e}$ nor $%
x_{e}z_{e}=(r_{1_{e}}s_{1_{e}},1_{2_{e}})\in J_{e}$ nor $%
y_{e}z_{e}=(s_{1_{e}},0_{2_{e}})\in Gr(J_{e})=Gr(I_{1_{e}})\times
Gr(I_{2_{e}})$ which is a contradiction. Thus, $I_{1_{e}}$ is an $e$-primary
ideal of $R_{1_{e}}.$ Similarly, $I_{2_{e}}$ is an $e$-primary ideal of $%
R_{2_{e}}.$

$(ii)\Rightarrow (i)$ If $J_{e}=I_{1_{e}}\times R_{2_{e}}$ for some $e$-$2$%
-absorbing primary ideal $I_{1_{e}}$ of $R_{1_{e}}$ or $J_{e}=R_{1_{e}}%
\times I_{2_{e}}$ for some $e$-$2$-absorbing primary ideal $I_{2_{e}}$ of $%
R_{2_{e}}$, then it is clear that $J_{e}$ is an $e$-$2$-absorbing primary
ideal of $R_{e}$. Hence, assume that $J_{e}=I_{1_{e}}\times I_{2_{e}}$ for
some $e$-primary ideal $I_{1_{e}}$ of $R_{1_{e}}$ and some $e$-primary ideal
$I_{2_{e}}$ of $R_{2_{e}}$. Then $I_{1_{e}}^{\prime }=I_{1_{e}}\times
R_{2_{e}}$ and $I_{2_{e}}^{\prime }=R_{1_{e}}\times I_{2_{e}}$ are $e$%
-primary ideals of $R_{e}$. Hence $I_{1_{e}}^{\prime }\cap I_{2_{e}}^{\prime
}=I_{1_{e}}\times I_{2_{e}}=J_{e}$ is an $e$-$2$-absorbing primary ideal of $%
R_{e}$ by \textbf{Lemma 3.5}.

\end{proof}
%---------------------------------------------------------------------------

%-----------------------------theorem 3-7----------------------------------------------
\begin{theorem}
Let $R$ be a $G$-graded ring, $M$ a graded $R$%
-module, $N=\oplus _{g\in G}N_{g}$ a proper graded submodule of $M$ and $%
g\in G.$ Let $M_{g}$ be a $g$-multiplication. Then the following statements
are equivalent:
\begin{enumerate}[\upshape (i)]
   \item $N_{g}$ is a $g$-primary submodule of an $R_{e}$-module $M_{g}.$

    \item $(N_{g}:_{R_{e}}M_{g})$ is an $e$-primary ideal of $R_{e}.$

\end{enumerate}
  \end{theorem}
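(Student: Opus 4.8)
The plan is to prove the two implications separately, noting that only $(ii)\Rightarrow(i)$ will actually use the $g$-multiplication hypothesis, while the reverse implication is a formal consequence of the definitions. Throughout I abbreviate $I_{e}=(N_{g}:_{R_{e}}M_{g})$, and I recall that $N_{g}$ being a $g$-primary submodule of $M_{g}$ means $N_{g}\neq M_{g}$ together with: $r_{e}m_{g}\in N_{g}$ (for $r_{e}\in R_{e}$, $m_{g}\in M_{g}$) forces either $m_{g}\in N_{g}$ or $r_{e}\in Gr(I_{e})$.

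For $(i)\Rightarrow(ii)$, first observe that $I_{e}$ is proper since $N_{g}\neq M_{g}$. Suppose $r_{e}s_{e}\in I_{e}$ with $s_{e}\notin Gr(I_{e})$; I would show $r_{e}\in I_{e}$, i.e. $r_{e}M_{g}\subseteq N_{g}$. Fixing an arbitrary $m_{g}\in M_{g}$, the containment $r_{e}s_{e}M_{g}\subseteq N_{g}$ gives $s_{e}(r_{e}m_{g})\in N_{g}$, so applying the $g$-primary hypothesis to the product $s_{e}\cdot(r_{e}m_{g})$ yields $r_{e}m_{g}\in N_{g}$ (the alternative $s_{e}\in Gr(I_{e})$ being excluded). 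Since $m_{g}$ was arbitrary, $r_{e}M_{g}\subseteq N_{g}$, that is $r_{e}\in I_{e}$. Hence $I_{e}$ is an $e$-primary ideal, and note this direction needs no multiplication assumption.

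For $(ii)\Rightarrow(i)$, I would mimic the computation already used in the proof of Corollary 2.9. Suppose $r_{e}m_{g}\in N_{g}$ with $r_{e}\notin Gr(I_{e})$; the goal is $m_{g}\in N_{g}$. Since $N_{g}$ is a submodule, $r_{e}R_{e}m_{g}\subseteq N_{g}$, and since $M_{g}$ is $g$-multiplication the cyclic submodule satisfies $R_{e}m_{g}=(R_{e}m_{g}:_{R_{e}}M_{g})M_{g}$; combining these gives $r_{e}(R_{e}m_{g}:_{R_{e}}M_{g})\subseteq I_{e}$. Applying the $e$-primary property of $I_{e}$ to each product $x_{e}r_{e}$ with $x_{e}\in(R_{e}m_{g}:_{R_{e}}M_{g})$, and using $r_{e}\notin Gr(I_{e})$, forces $x_{e}\in I_{e}$; thus $(R_{e}m_{g}:_{R_{e}}M_{g})\subseteq I_{e}$. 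Multiplying by $M_{g}$ and again invoking $g$-multiplication (now for $N_{g}=I_{e}M_{g}$) gives $R_{e}m_{g}=(R_{e}m_{g}:_{R_{e}}M_{g})M_{g}\subseteq I_{e}M_{g}=N_{g}$, so in particular $m_{g}\in N_{g}$.

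The only genuinely delicate point is $(ii)\Rightarrow(i)$, specifically the correct bookkeeping with the asymmetric form of the $e$-primary definition: the hypothesis reads $a_{e}b_{e}\in I_{e}\Rightarrow a_{e}\in I_{e}$ or $b_{e}\in Gr(I_{e})$, so one must apply it to the pair $(x_{e},r_{e})$ (not the reverse) in order to conclude $x_{e}\in I_{e}$ from $r_{e}\notin Gr(I_{e})$. I would also flag explicitly the two separate uses of $g$-multiplication, once to write the cyclic submodule $R_{e}m_{g}$ as $(R_{e}m_{g}:_{R_{e}}M_{g})M_{g}$ and once to recover $N_{g}$ as $I_{e}M_{g}$, since these are precisely the steps that fail for a general module.
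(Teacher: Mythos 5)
Your proof is correct and follows essentially the same route as the paper's: for $(i)\Rightarrow(ii)$ you apply the $g$-primary property pointwise to $s_{e}(r_{e}m_{g})\in N_{g}$ exactly as the paper does (with the roles of $r_{e}$ and $s_{e}$ harmlessly interchanged, which is equivalent by commutativity), and for $(ii)\Rightarrow(i)$ you use the identical chain $R_{e}m_{g}=(R_{e}m_{g}:_{R_{e}}M_{g})M_{g}\subseteq (N_{g}:_{R_{e}}M_{g})M_{g}\subseteq N_{g}$. There is no gap; your extra remarks about where $g$-multiplication enters only make explicit what the paper leaves implicit.
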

\begin{proof}
$(i)\Rightarrow (ii)$ Let $r_{e},s_{e}\in R_{e}$ such that $%
r_{e}s_{e}\in (N_{g}:_{R_{e}}M_{g})$ and $r_{e}\not\in
Gr((N_{g}:_{R_{e}}M_{g})).$ Thus, $r_{e}s_{e}M_{g}\subseteq N_{g}$ yields
that $s_{e}M_{g}\subseteq N_{g}$ and then $s_{e}\in (N_{g}:_{R_{e}}M_{g}).$
Therefore, $(N_{g}:_{R_{e}}M_{g})$ is an $e$-primary ideal of $R_{e}.$

$(ii)\Rightarrow (i)$ Let $r_{e}\in R_{e}$ and $m_{g}\in M_{g}$ such that $%
r_{e}m_{g}\in N_{g}$ and $r_{e}\not\in Gr((N_{g}:_{R_{e}}M_{g})).$ Hence, $%
r_{e}R_{e}m_{g}=r_{e}(R_{e}m_{g}:_{R_{e}}M_{g})M_{g}\subseteq N_{g}$ which
yields that $r_{e}(R_{e}m_{g}:_{R_{e}}M_{g})\subseteq (N_{g}:_{R_{e}}M_{g}).$
Thus, we get $(R_{e}m_{g}:_{R_{e}}M_{g})\subseteq (N_{g}:_{R_{e}}M_{g}),$ so
$m_{g}\in R_{e}m_{g}=(R_{e}m_{g}:_{R_{e}}M_{g})M_{g}\subseteq
(N_{g}:_{R_{e}}M_{g})M_{g}=N_{g}.$ Therefore, $N_{g}$ is a $g$-primary
submodule of an $R_{e}$-module $M_{g}.$
\end{proof}
%-----------------------------theorem 3-8----------------------------------------------
\begin{theorem}
Let $R_{i}$ be a $G$-graded $gr$-Noetherian ring, $%
M_{i}$ a graded $R_{i}$-module, $N_{i}=\oplus _{g\in G}N_{i_{g}}$ a graded
submodule of $M_{i}$ for $i=1,2$ and $g\in G.$ Let $R=R_{1}\times R_{2}$ and
$M=M_{1}\times M_{2}$ such that $M_{g}$ is a fully $g$-idempotent $R_{e}$%
-module. Then:
\begin{enumerate}[\upshape (i)]
 \item $N_{1_{g}}$ is a $g$-$G2$-absorbing submodule of $M_{1_{g}}$ if and
only if $N_{1_{g}}\times M_{2_{g}}$ is a $g$-$G2$-absorbing submodule of $%
M_{g}$.

\item $N_{2_{g}}$ is a $g$-$G2$-absorbing submodule of $M_{2_{g}}$ if and
only if $M_{1_{g}}\times N_{2_{g}}$ is a $g$-$G2$-absorbing submodule of $%
M_{g}$.

\item If $N_{1_{g}}$ is a $g$-primary submodule of $M_{1_{g}}$ and $%
N_{2_{g}}$ is a $g$-primary submodule of $M_{2_{g}}$, then $N_{1_{g}}\times
N_{2_{g}}$ is a $g$-$G2$-absorbing submodule of $M_{g}$.
   \end{enumerate}
 \end{theorem}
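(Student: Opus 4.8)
The plan is to reduce each statement to a property of the colon ideal $(\,\cdot\,:_{R_e}M_g)$ and then invoke the product decompositions already established. The key observation is that, under the standing hypotheses (with $R$ being $gr$-Noetherian and $M_g$ fully $g$-idempotent), a graded submodule is $g$-$G2$-absorbing precisely when its colon ideal is $e$-$2$-absorbing primary: the forward implication is \textbf{Theorem 2.6} $(ii)$ and the converse is \textbf{Theorem 3.2}. First I would record three preliminary facts. That $R=R_{1}\times R_{2}$ is again $gr$-Noetherian, since graded ideals of a product split as products of graded ideals and the ascending chain condition is thereby inherited. That by \textbf{Lemma 3.4} both $M_{1_{g}}$ and $M_{2_{g}}$ are fully $g$-idempotent $R_{i_{e}}$-modules. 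And hence that $M_{1_{g}},M_{2_{g}}$ are $g$-multiplication modules. These ensure that the equivalence above applies simultaneously to $N_{i_{g}}$ inside $M_{i_{g}}$ and to any graded submodule of $M_{g}$.

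For part $(i)$ I would compute the colon ideal directly: writing $R_{e}=R_{1_{e}}\times R_{2_{e}}$ and $M_{g}=M_{1_{g}}\times M_{2_{g}}$, one checks that
\[
(N_{1_{g}}\times M_{2_{g}}:_{R_{e}}M_{g})=(N_{1_{g}}:_{R_{1_{e}}}M_{1_{g}})\times R_{2_{e}}.
\]
By \textbf{Theorem 3.6} (the first case), this ideal is $e$-$2$-absorbing primary in $R_{e}$ if and only if $(N_{1_{g}}:_{R_{1_{e}}}M_{1_{g}})$ is $e$-$2$-absorbing primary in $R_{1_{e}}$. Chaining the equivalence ``$g$-$G2$-absorbing $\Leftrightarrow$ colon is $e$-$2$-absorbing primary'' on both $M_{g}$ and $M_{1_{g}}$ then gives: $N_{1_{g}}\times M_{2_{g}}$ is $g$-$G2$-absorbing in $M_{g}$ $\Leftrightarrow$ its colon is $e$-$2$-absorbing primary $\Leftrightarrow$ $(N_{1_{g}}:_{R_{1_{e}}}M_{1_{g}})$ is $e$-$2$-absorbing primary $\Leftrightarrow$ $N_{1_{g}}$ is $g$-$G2$-absorbing in $M_{1_{g}}$. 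Part $(ii)$ is entirely symmetric. For part $(iii)$ I would again compute
\[
(N_{1_{g}}\times N_{2_{g}}:_{R_{e}}M_{g})=(N_{1_{g}}:_{R_{1_{e}}}M_{1_{g}})\times(N_{2_{g}}:_{R_{2_{e}}}M_{2_{g}}),
\]
then use \textbf{Theorem 3.7} (applicable since each $M_{i_{g}}$ is $g$-multiplication) to conclude that every factor $(N_{i_{g}}:_{R_{i_{e}}}M_{i_{g}})$ is an $e$-primary ideal of $R_{i_{e}}$. By the third case of \textbf{Theorem 3.6}, the product of two $e$-primary ideals is $e$-$2$-absorbing primary, so the colon of $N_{1_{g}}\times N_{2_{g}}$ is $e$-$2$-absorbing primary, and \textbf{Theorem 3.2} then delivers that $N_{1_{g}}\times N_{2_{g}}$ is $g$-$G2$-absorbing in $M_{g}$.

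Along the way I would keep track of the properness conditions, since both ``$g$-$G2$-absorbing'' and ``$e$-$2$-absorbing primary'' require proper objects: $N_{1_{g}}\neq M_{1_{g}}$ is equivalent to $(N_{1_{g}}:_{R_{1_{e}}}M_{1_{g}})\neq R_{1_{e}}$ and to $N_{1_{g}}\times M_{2_{g}}\neq M_{g}$, while in $(iii)$ the primariness of each $N_{i_{g}}$ already forces $N_{i_{g}}\neq M_{i_{g}}$. The computations themselves are routine; I expect the point requiring most care to be the correct bookkeeping of the colon ideals of the product submodules, together with verifying that all hypotheses of \textbf{Theorem 3.2}, \textbf{Theorem 3.6} and \textbf{Theorem 3.7} are genuinely in force---in particular that passing to the factors via \textbf{Lemma 3.4} preserves both the $gr$-Noetherian and the fully $g$-idempotent hypotheses needed to run the equivalence on each side.
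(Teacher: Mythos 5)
Your proposal is correct and follows essentially the same route as the paper's proof: reduce each part to the colon ideal via Theorem 2.6 and Theorem 3.2 (with Lemma 3.4 supplying the fully $g$-idempotent hypothesis on each factor), and transfer the $e$-$2$-absorbing primary property across the product using Theorem 3.6, with Theorem 3.7 handling the $e$-primary colon ideals in part (iii). Your added checks (that $R_{1}\times R_{2}$ is $gr$-Noetherian and the properness bookkeeping) are points the paper leaves implicit, but they do not change the argument.
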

\begin{proof}
$(i)$ Since $M_{g}$ is a fully $g$-idempotent $R_{e}$-module, $%
M_{i_{g}}$ is a fully $g$-idempotent $R_{i_{e}}$-module, for $i=1,2$, \textbf{by Lemma 3.4.} Now,
suppose that $N_{1_{g}}$ is a $g$-$G2$-absorbing submodule of $M_{1_{g}}$,
then by \textbf{Theorem 2.6} we get $(N_{1_{g}}:_{R_{1_{e}}}M_{1_{g}})$ is
an $e$-$2$-absorbing primary ideal of $R_{1_{e}}$. Thus, $(N_{1_{g}}\times
M_{2_{g}}:_{R_{e}}M_{g})=(N_{1_{g}}:_{R_{1_{e}}}M_{1_{g}})\times R_{2_{e}}$
is an $e$-$2$-absorbing primary ideal of $R_{e}$ by \textbf{Theorem 3.6}.
Hence, by \textbf{Theorem 3.2} we get $N_{1_{g}}\times M_{2_{g}}$ is a $g$-$G2$-absorbing submodule of $M_{g}$. Conversely, suppose that $%
N_{1_{g}}\times M_{2_{g}}$ is a $g$-$G2$-absorbing submodule of $M_{g}$,
then $(N_{1_{g}}\times
M_{2_{g}}:_{R_{e}}M_{g})=(N_{1_{g}}:_{R_{1_{e}}}M_{1_{g}})\times R_{2_{e}}$
is an $e$-$2$-absorbing primary ideal of $R_{e}$ by \textbf{Theorem 2.6}.
So, $(N_{1_{g}}:_{R_{1_{e}}}M_{1_{g}})$ is an $e$-$2$-absorbing primary
ideal of $R_{1_{e}}$ by \textbf{Theorem 3.6}. Thus, by \textbf{Theorem 3.2}
we get $N_{1_{g}}$ is a $g$-$G2$-absorbing submodule of $M_{1_{g}}$.

$(ii)$ The proof is similar to that in part $(i)$.

$(iii)$ Let $N_{i_{g}}$ be a $g$-primary submodule of $M_{i_{g}}$, then $%
(N_{i_{g}}:_{R_{i_{e}}}M_{i_{g}})$ is an $e$-primary ideal of $R_{i_{e}},$
for $i=1,2$. Now, since $(N_{1_{g}}\times
N_{2_{g}}:_{R_{e}}M_{g})=(N_{1_{g}}:_{R_{1_{e}}}M_{1_{g}})\times
(N_{2_{g}}:_{R_{2_{e}}}M_{2_{g}})$, $(N_{1_{g}}\times
N_{2_{g}}:_{R_{e}}M_{g})$ is an $e$-$2$-absorbing primary ideal of $R_{e}$
by \textbf{Theorem 3.6}. Therefore, $N_{1_{g}}\times N_{2_{g}}$ is a $g$-$G2$-absorbing submodule of $M_{g}$.
\end{proof}

%-----------------------------theorem 3-9----------------------------------------------
\begin{theorem}
Let $R_{i}$ be a $G$-graded $gr$-Noetherian ring,
$M_{i}$ a graded $R_{i}$-module, $N_{i}=\oplus _{g\in G}N_{i_{g}}$ a graded
submodule of $M_{i}$ for $i=1,2$ and $g\in G.$ Let $R=R_{1}\times R_{2}$, $%
M=M_{1}\times M_{2}$ such that $M_{g}$ is a fully $g$-idempotent $R_{e}$%
-module and $N=N_{1}\times N_{2}$. Then the following statements are
equivalent:
\begin{enumerate}[\upshape (i)]
 \item $N_{g}$ is a $g$-$G2$-absorbing submodule of $M_{g}$.

 \item Either $N_{1_{g}}=M_{1_{g}}$ and $N_{2_{g}}$ is a $g$-$G2$-absorbing
submodule of $M_{2_{g}}$ or $N_{2_{g}}=M_{2_{g}}$ and $N_{1_{g}}$ is a $g$-$G2$-absorbing submodule of $M_{1_{g}}$ or $N_{1_{g}}$ and $N_{2_{g}}$ are $%
g $-primary submodules of $M_{1_{g}}$ and $M_{2_{g}}$, respectively.
 \end{enumerate}

\end{theorem}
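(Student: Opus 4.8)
The plan is to push the whole problem down to the colon ideal $(N_{g}:_{R_{e}}M_{g})$ and then invoke the product decomposition of Theorem 3.6. First I would record the structural reductions. Since $M_{g}=M_{1_{g}}\times M_{2_{g}}$ is fully $g$-idempotent, Lemma 3.4 gives that each $M_{i_{g}}$ is a fully $g$-idempotent, hence $g$-multiplication, $R_{i_{e}}$-module. I would then write down the elementary decomposition
$$(N_{g}:_{R_{e}}M_{g})=(N_{1_{g}}:_{R_{1_{e}}}M_{1_{g}})\times (N_{2_{g}}:_{R_{2_{e}}}M_{2_{g}}),$$
and abbreviate $I_{i_{e}}:=(N_{i_{g}}:_{R_{i_{e}}}M_{i_{g}})$ and $J_{e}:=I_{1_{e}}\times I_{2_{e}}$, so that $J_{e}=(N_{g}:_{R_{e}}M_{g})$.

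The crucial bridge is a biconditional: under the standing hypotheses that $R$ is $gr$-Noetherian and $M_{g}$ is fully $g$-idempotent, $N_{g}$ is a $g$-$G2$-absorbing submodule of $M_{g}$ if and only if $J_{e}$ is an $e$-$2$-absorbing primary ideal of $R_{e}$. Its forward direction is exactly Theorem 2.6(ii), and its reverse direction is Theorem 3.2. The same equivalence holds factorwise for each pair $(N_{i_{g}},M_{i_{g}})$ over $R_{i}$, because each $R_{i}$ is $gr$-Noetherian and each $M_{i_{g}}$ is fully $g$-idempotent. Alongside this I would record two more translations: Theorem 3.8 (available since each $M_{i_{g}}$ is $g$-multiplication) says $N_{i_{g}}$ is a $g$-primary submodule of $M_{i_{g}}$ if and only if $I_{i_{e}}$ is an $e$-primary ideal of $R_{i_{e}}$; and trivially $I_{i_{e}}=R_{i_{e}}$ if and only if $N_{i_{g}}=M_{i_{g}}$.

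With these dictionaries in hand, the equivalence (i) $\Leftrightarrow$ (ii) follows by feeding $J_{e}=I_{1_{e}}\times I_{2_{e}}$ into Theorem 3.6. Statement (i) is equivalent to ``$J_{e}$ is $e$-$2$-absorbing primary'', which by Theorem 3.6 splits into exactly three cases: $I_{1_{e}}$ is $e$-$2$-absorbing primary with $I_{2_{e}}=R_{2_{e}}$; symmetrically $I_{1_{e}}=R_{1_{e}}$ with $I_{2_{e}}$ being $e$-$2$-absorbing primary; or both $I_{1_{e}}$ and $I_{2_{e}}$ are $e$-primary. Translating each case through the equivalences above yields precisely the three alternatives of (ii): $N_{2_{g}}=M_{2_{g}}$ with $N_{1_{g}}$ a $g$-$G2$-absorbing submodule of $M_{1_{g}}$; $N_{1_{g}}=M_{1_{g}}$ with $N_{2_{g}}$ a $g$-$G2$-absorbing submodule of $M_{2_{g}}$; or $N_{1_{g}}$ and $N_{2_{g}}$ both $g$-primary. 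Properness of $N_{g}$ needs no separate argument, since in each case at least one factor is a $g$-$G2$-absorbing or $g$-primary (hence proper) submodule, forcing $N_{g}\neq M_{g}$.

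The step I expect to demand the most care is the bridging biconditional of the second paragraph, as it is the only point where the full force of the $gr$-Noetherian and fully $g$-idempotent hypotheses is consumed (through Theorem 3.2). Concretely, I would make sure to verify that these hypotheses genuinely descend to each factor $(R_{i},M_{i_{g}})$ before applying Theorems 2.6, 3.2 and 3.8 componentwise, so that the componentwise equivalences are legitimate; once that bookkeeping is done, the remainder is a clean case-by-case translation driven by Theorem 3.6.
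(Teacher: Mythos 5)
Your proposal is correct and takes essentially the same route as the paper's proof: reduce everything to the colon ideal $(N_{g}:_{R_{e}}M_{g})=(N_{1_{g}}:_{R_{1_{e}}}M_{1_{g}})\times (N_{2_{g}}:_{R_{2_{e}}}M_{2_{g}})$, decompose it via Theorem 3.6, and translate back to submodules using Theorem 2.6, Theorem 3.2 (with Lemma 3.4) and the primary correspondence --- though note that the latter equivalence ($N_{i_{g}}$ $g$-primary iff $(N_{i_{g}}:_{R_{i_{e}}}M_{i_{g}})$ $e$-primary) is Theorem 3.7, not Theorem 3.8 as you cite it. The only cosmetic difference is that the paper packages the module-level translations by invoking Theorem 3.8 (its $(ii)\Rightarrow (i)$ direction is literally ``Clearly, by Theorem 3.8''), whereas you rerun the underlying chain of biconditionals directly; since Theorem 3.8 is itself proved from Theorems 2.6, 3.2 and 3.6, the two arguments coincide in substance.
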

\begin{proof}
$(i)\Rightarrow (ii)$ Assume that $N_{g}=N_{1_{g}}\times N_{2_{g}}$
is a $g$-$G2$-absorbing submodule of $M_{g}$. So, by \textbf{Theorem 2.6}
we get $(N_{g}:_{R_{e}}M_{g})=(N_{1_{g}}:_{R_{1_{e}}}M_{1_{g}})\times
(N_{2_{g}}:_{R_{2_{e}}}M_{2_{g}})$ is an $e$-$2$-absorbing primary ideal of $%
R_{e}$. Thus, we get either $(N_{1_{g}}:_{R_{1_{e}}}M_{1_{g}})=R_{1_{e}}$
and $(N_{2_{g}}:_{R_{2_{e}}}M_{2_{g}})$ is an $e$-$2$-absorbing primary
ideal of $R_{2_{e}}$ or $(N_{2_{g}}:_{R_{2_{e}}}M_{2_{g}})=R_{2_{e}}$ and $%
(N_{1_{g}}:_{R_{1_{e}}}M_{1_{g}})$ is an $e$-$2$-absorbing primary ideal of $%
R_{1_{e}}$ or $(N_{1_{g}}:_{R_{1_{e}}}M_{1_{g}})$ and $%
(N_{2_{g}}:_{R_{2_{e}}}M_{2_{g}})$ are $e$-primary ideals of $R_{1_{e}}$ and
$R_{2_{e}}$, respectively, by \textbf{Theorem 3.6}. Now, if $%
(N_{1_{g}}:_{R_{1_{e}}}M_{1_{g}})=R_{1_{e}}$ and $%
(N_{2_{g}}:_{R_{2_{e}}}M_{2_{g}})$ is an $e$-$2$-absorbing primary ideal of $%
R_{2_{e}}$, then $N_{1_{g}}=M_{1_{g}}$ and $N_{2_{g}}$ is a $g$-$G2$%
-absorbing submodule of $M_{2_{g}}$ by \textbf{Theorem 3.8}. Similarly, if $%
(N_{2_{g}}:_{R_{2_{e}}}M_{2_{g}})=R_{2_{e}}$ and $%
(N_{1_{g}}:_{R_{1_{e}}}M_{1_{g}})$ is an $e$-$2$-absorbing primary ideal of $%
R_{1_{e}}$, then $N_{2_{g}}=M_{2_{g}}$ and $N_{1_{g}}$ is a $g$-$G2$%
-absorbing submodule of $M_{1_{g}}$. If $(N_{i_{g}}:_{R_{i_{e}}}M_{i_{g}})$
is an $e$-primary ideal of $R_{i_{e}}$, then since $M_{i_{g}}$ is a $g$%
-multiplication $R_{i_{e}}$-module, $N_{i_{g}}$ is a $g$-primary submodule
of $M_{i_{g}}$, for $i=1,2,$ \textbf{by Theorem 3.7}.

$(ii)\Rightarrow (i)$ Clearly, by \textbf{Theorem 3.8}.
\end{proof}
%---------------------------------------------------------------------------

%-----------------------------theorem 3-10----------------------------------------------
\begin{theorem}
Let $R$ be a $G$-graded ring, $I=\oplus _{g\in
G}I_{g}$ be a graded ideal of $R$ and $S_{e}\subseteq R_{e}$ be a
multiplicatively closed subset. Then the set $\Gamma =\{J_{e}|$ $J_{e}$ is
an ideal of $R_{e}$, $S_{e}\cap J_{e}=\emptyset $, $I_{e}\subseteq J_{e}\}$
has a maximal element and such maximal elements are $e$-prime ideals of $%
R_{e}$.
\end{theorem}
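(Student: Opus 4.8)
The plan is to establish both assertions by the classical Zorn's lemma argument together with the standard ``ideal maximal with respect to avoiding a multiplicative set is prime'' trick, specialized to the subring $R_{e}$. Throughout I would work entirely inside $R_{e}$, since $\Gamma$ consists of ideals of $R_{e}$ and all relevant elements are homogeneous of degree $e$; the grading plays no essential role beyond the fact that $R_{e}$ is a commutative ring with $1=1_{R_{e}}\in R_{e}$.

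First I would verify that $\Gamma$ is nonempty (this uses the standing hypothesis $S_{e}\cap I_{e}=\emptyset$, so that $I_{e}$ itself lies in $\Gamma$) and then apply Zorn's lemma. Partially ordering $\Gamma$ by inclusion, I would take an arbitrary chain $\{J_{e,\alpha}\}_{\alpha}$ and form its union $J_{e}=\bigcup_{\alpha}J_{e,\alpha}$. The union of a chain of ideals is again an ideal, it contains $I_{e}$, and it remains disjoint from $S_{e}$ because each member of the chain does; hence $J_{e}\in\Gamma$ is an upper bound for the chain. Zorn's lemma then furnishes a maximal element $P_{e}$ of $\Gamma$.

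For the second assertion I would show that any maximal element $P_{e}$ of $\Gamma$ is $e$-prime. Suppose $r_{e}s_{e}\in P_{e}$ with $r_{e},s_{e}\in R_{e}$ but $r_{e}\not\in P_{e}$ and $s_{e}\not\in P_{e}$. Then the ideals $P_{e}+R_{e}r_{e}$ and $P_{e}+R_{e}s_{e}$ properly contain $P_{e}$ (using $1\in R_{e}$ to see $r_{e}\in R_{e}r_{e}$), so by maximality neither lies in $\Gamma$; each must therefore meet $S_{e}$. Choosing $s_{1},s_{2}\in S_{e}$ with $s_{1}=p_{1}+a_{1}r_{e}$ and $s_{2}=p_{2}+a_{2}s_{e}$, where $p_{1},p_{2}\in P_{e}$ and $a_{1},a_{2}\in R_{e}$, I would expand the product $s_{1}s_{2}=p_{1}p_{2}+p_{1}a_{2}s_{e}+a_{1}r_{e}p_{2}+a_{1}a_{2}r_{e}s_{e}$ and observe that every term except the last lies in $P_{e}$ because it carries a factor $p_{1}$ or $p_{2}$, while the last term lies in $P_{e}$ because $r_{e}s_{e}\in P_{e}$. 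Hence $s_{1}s_{2}\in P_{e}$; but $s_{1}s_{2}\in S_{e}$ since $S_{e}$ is multiplicatively closed, contradicting $S_{e}\cap P_{e}=\emptyset$. This forces $r_{e}\in P_{e}$ or $s_{e}\in P_{e}$, so $P_{e}$ is $e$-prime.

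I do not expect a genuine obstacle here, as both halves are textbook arguments. The only points demanding care are confirming the nonemptiness hypothesis $S_{e}\cap I_{e}=\emptyset$ so that Zorn's lemma applies, and keeping track in the expansion of $s_{1}s_{2}$ that the cross terms $p_{1}a_{2}s_{e}$ and $a_{1}r_{e}p_{2}$ indeed sit in $P_{e}$, which is immediate since $P_{e}$ is an ideal containing $p_{1}$ and $p_{2}$.
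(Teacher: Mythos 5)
Your proof is correct and follows essentially the same route as the paper: Zorn's lemma applied to the union of a chain for existence, then the standard expansion of $s_{1}s_{2}$ with $s_{1}\in (P_{e}+\langle r_{e}\rangle )\cap S_{e}$ and $s_{2}\in (P_{e}+\langle s_{e}\rangle )\cap S_{e}$ for primality (the paper phrases this contrapositively, you by contradiction, but the computation is identical). You are also right to flag that nonemptiness of $\Gamma $ requires $S_{e}\cap I_{e}=\emptyset $, a hypothesis the paper uses implicitly when asserting $I_{e}\in \Gamma $.
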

\begin{proof}
Since $I_{e}\in \Gamma $, $\Gamma \not=\emptyset $. The set $\Gamma
$ is a partially ordered set with respect to set inclusion $^{\prime \prime
}\subseteq ^{\prime \prime }$. Now, let $\Delta $ be a totally ordered
subset of $\Gamma $, then $J=\cup _{J_{e}\in \Delta }J_{e}$ is an ideal of $%
R_{e}$. Now, let $P_{e}$ be a maximal element of $\Gamma $ and $%
r_{e},s_{e}\in R_{e}$ such that $r_{1_{e}}\not\in P_{e}$ and $%
r_{2_{e}}\not\in P_{e}$. Thus, $P_{e}\subsetneq (P_{e}+\langle
r_{1_{e}}\rangle )$, which conclude that $(P_{e}+\langle r_{1_{e}}\rangle
)\cap S_{e}\not=\emptyset ,$ so there exists $s_{1_{e}}\in S_{e}$ such that $%
s_{1_{e}}=p_{1_{e}}+r_{1_{e}}t_{1_{e}}$ where $p_{1_{e}}\in P_{e}$ and $%
t_{1_{e}}\in R_{e}$. Similarly, there exists $s_{2_{e}}\in S_{e}$ such that $%
s_{2_{e}}=p_{2_{e}}+r_{2_{e}}t_{2_{e}}$ where $p_{2_{e}}\in P_{e}$ and $%
t_{2_{e}}\in R_{e}$. Hence, $%
s_{1_{e}}s_{2_{e}}=(p_{1_{e}}+r_{1_{e}}t_{1_{e}})(p_{2_{e}}+r_{2_{e}}t_{2_{e}})=p_{1_{e}}p_{2_{e}}+p_{1_{e}}r_{2_{e}}t_{2_{e}}+p_{2_{e}}r_{1_{e}}t_{1_{e}}+r_{1_{e}}r_{2_{e}}t_{1_{e}}t_{2_{e}}\in S_{e}\backslash P_{e}
$, which yields that $r_{1_{e}}r_{2_{e}}t_{1_{e}}t_{2_{e}}\not\in P_{e}$ and
then $r_{1_{e}}r_{2_{e}}\not\in P_{e}$. Therefore, $P_{e}$ is an $e$-prime
ideal of $R_{e}$.
\end{proof}
%---------------------------------------------------------------------------
%-----------------------------theorem 3-11----------------------------------------------
\begin{theorem}
 Let $R$ be a $G$-graded ring and $I=\oplus _{g\in
G}I_{g},$ $P=\oplus _{g\in G}P_{g}$ be two graded ideals of $R.$ Let $P_{e}$
be an $e$-prime ideal of $R_{e}$ such that $I_{e}\subseteq P_{e}$. Then the
following statements are equivalent:
\begin{enumerate}[\upshape (i)]
 \item $P_{e}$ is a minimal $e$-prime ideal of $R_{e}$ over $I_{e}$.

 \item  $R_{e}\backslash P_{e}$ is a multiplicatively closed subset of $R_{e}$
that is maximal with respect to missing $I_{e}$.

 \item  For each $x_{e}\in P_{e}$, there exists $y_{e}\in R_{e}\backslash
P_{e}$ and nonnegative integer $n$ such that $y_{e}x_{e}^{n}\in I_{e}$.

 \end{enumerate}

\end{theorem}
\begin{proof}
$(i)\Rightarrow (ii)$ Suppose that $P_{e}$ is a minimal $e$-prime
ideal of $R_{e}$ over $I_{e}$. Now, let $S_{e}=R_{e}\backslash P_{e}$, then $%
S_{e}$ is a multiplicatively closed subset of $R_{e}$ and there exists a
maximal element in the set of ideals of $R_{e}$ containing $I_{e}$ and
disjoint from $S_{e}$. Assume that $J_{e}$ is a maximal then $J_{e}$ is an $%
e $-prime ideal of $R_{e}$ by \textbf{Theorem 3.10}. Since $P_{e}$ is a
minimal, $P_{e}=J_{e}$ and so $S_{e}$ is a maximal with respect to missing $%
I_{e}$.

$(ii)\Rightarrow (iii)$ Let $0\not=x_{e}\in P_{e}$ and $S_{e}=%
\{y_{e}x_{e}^{n}|y_{e}\in R_{e}\backslash P_{e},$ $n=0,1,2,...\}$. Then $%
R_{e}\backslash P_{e}\subsetneq S$. Since $R_{e}\backslash P_{e}$ is a
maximal, there exists $y_{e}\in R_{e}\backslash P_{e}$ and $n\in
%TCIMACRO{\U{2124} }%
%BeginExpansion
\mathbb{Z}
%EndExpansion
^{+}$ such that $y_{e}x_{e}^{n}\in I_{e}$.

$(iii)\Rightarrow (i)$ Assume that $I_{e}\subset J_{e}\subseteq P_{e}$,
where $J_{e}$ is an $e$-prime ideal of $R_{e}$. If there exists $x_{e}\in
P_{e}\backslash J_{e}$, then there exists $y_{e}\in R_{e}\backslash P_{e}$
and $n\in
%TCIMACRO{\U{2124} }%
%BeginExpansion
\mathbb{Z}
%EndExpansion
^{+}$\ such that $y_{e}x_{e}^{n}\in I_{e}\subseteq J_{e}$. But $y_{e}\not\in
J_{e}$, so $x_{e}^{n}\in J_{e}$, a contradiction. Therefore, $P_{e}$ is a
minimal $e$-prime ideal of $R_{e}$ over $I_{e}$.
\end{proof}

%-------------------------------------------------------------------------------

%----------------------------theorem 3-12---------------------------------------------------
\begin{theorem}
Let $R$ be a $G$-graded ring and $I=\oplus _{g\in
G}I_{g}$ a graded ideal of $R.$ If $I_{e}$ is an $e$-$2$-absorbing ideal of $%
R_{e}$, then there are at most two $e$-prime ideals of $R_{e}$ that are
minimal over $I_{e}$.
\end{theorem}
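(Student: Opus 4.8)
The plan is to reduce to the radical of $I_{e}$ and then manufacture a single forbidden triple that violates the $e$-$2$-absorbing condition. First I would replace $I_{e}$ by $Gr(I_{e})$. Every $e$-$2$-absorbing ideal is automatically $e$-$2$-absorbing primary, since $r_{e}t_{e}\in I_{e}$ forces $r_{e}t_{e}\in Gr(I_{e})$; hence \textbf{Theorem 2.7} applies and tells us that $Gr(I_{e})$ is again an $e$-$2$-absorbing ideal of $R_{e}$. Because $Gr(I_{e})$ is the radical of $I_{e}$ in $R_{e}$, the $e$-prime ideals minimal over $I_{e}$ coincide with those minimal over $Gr(I_{e})$, and $Gr(I_{e})$ is exactly the intersection of all of them. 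So it suffices to prove the statement for $Gr(I_{e})$; I rename it $I_{e}$ and henceforth assume $I_{e}=\bigcap_{\alpha}P_{\alpha}$, the intersection of its minimal $e$-primes, with $I_{e}$ itself $e$-$2$-absorbing.

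Next I argue by contradiction: suppose there are three \emph{distinct} minimal $e$-primes $P_{1},P_{2},P_{3}$ over $I_{e}$. The goal is to choose $u,v,w\in R_{e}$ with $u\in P_{1}$, $v\in P_{2}$, $w\in P_{3}$, arranged so that $u\notin P_{2}\cup P_{3}$, $v\notin P_{1}\cup P_{3}$, $w\notin P_{1}\cup P_{2}$, while \emph{every} minimal prime $P_{\alpha}$ contains at least one of $u,v,w$. Concretely I would take $u\in\big(\bigcap_{P_{\alpha}\neq P_{2},P_{3}}P_{\alpha}\big)\setminus(P_{2}\cup P_{3})$, and $v,w$ symmetrically. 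Granting such a choice, the product $uvw$ lies in every $P_{\alpha}$ (if $P_{\alpha}\notin\{P_{1},P_{2},P_{3}\}$ all three factors lie in it, while $P_{1},P_{2},P_{3}$ catch $u,v,w$ respectively), so $uvw\in\bigcap_{\alpha}P_{\alpha}=I_{e}$. On the other hand, since $P_{3}$ is prime and $u,v\notin P_{3}$ we get $uv\notin P_{3}\supseteq I_{e}$, so $uv\notin I_{e}$; symmetrically $uw\notin P_{2}$ gives $uw\notin I_{e}$ and $vw\notin P_{1}$ gives $vw\notin I_{e}$. Thus $uvw\in I_{e}$ while none of $uv,uw,vw$ lies in $I_{e}$, contradicting the $e$-$2$-absorbing property. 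Therefore there are at most two minimal $e$-primes over $I_{e}$.

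The main obstacle is exactly the existence of the elements $u,v,w$ above, i.e.\ forcing the triple product into $I_{e}$ while keeping the pairwise products out. The ``escape'' of the pairwise products is free from the incomparability of minimal primes, but the selection of $u$ requires $\bigcap_{P_{\alpha}\neq P_{2},P_{3}}P_{\alpha}\not\subseteq P_{2}$ and $\not\subseteq P_{3}$, after which prime avoidance (for a union of two primes) produces $u$. Here is where I would use that there are only \emph{finitely many} minimal $e$-primes over $I_{e}$, which is available in the $gr$-Noetherian setting of this section: if $\bigcap_{P_{\alpha}\neq P_{2},P_{3}}P_{\alpha}\subseteq P_{2}$, then $P_{2}$ contains a finite intersection (hence a finite product) of the remaining minimal primes, so by primeness $P_{2}\supseteq P_{\alpha}$ for some $\alpha\neq 2$, contradicting the incomparability of distinct minimal primes. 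This finiteness is the only delicate point; without it one would instead route through the classical structure theorem showing that a radical $2$-absorbing ideal is either prime or an intersection of exactly two primes, but in the present framework the finite-minimal-prime argument is the most direct.
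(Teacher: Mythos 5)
Your argument has a genuine gap: it hinges on the set of minimal $e$-prime ideals over $I_{e}$ being \emph{finite}, and you justify this by appealing to ``the $gr$-Noetherian setting of this section.'' But Theorem 3.12 carries no Noetherian hypothesis --- it is stated for an arbitrary $G$-graded ring $R$ (the section title does not add hypotheses to the theorems, and indeed Theorems 3.10--3.13 are all stated and proved without $gr$-Noetherianness). Without finiteness, your selection of $u\in\bigl(\bigcap_{P_{\alpha}\neq P_{2},P_{3}}P_{\alpha}\bigr)\setminus(P_{2}\cup P_{3})$ breaks down: the incomparability-via-finite-products argument showing $\bigcap_{P_{\alpha}\neq P_{2},P_{3}}P_{\alpha}\not\subseteq P_{2}$ is exactly where a possibly infinite intersection cannot be replaced by a finite product, and there is no a priori bound on the number of minimal primes in a general (even reduced) commutative ring. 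So as written you prove a weaker statement than the one asserted. The preliminary reduction to $Gr(I_{e})$ (via 2-absorbing $\Rightarrow$ 2-absorbing primary and Theorem 2.7), the escape of the pairwise products by incomparability, and the two-prime avoidance step are all fine; the finiteness is the one unjustified ingredient, and it is essential to your construction.

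The paper circumvents this entirely by a different mechanism, and it is worth seeing why. Instead of forcing a triple product into $I_{e}$ by making it lie in \emph{all} minimal primes, the paper uses Theorem 3.11 (for $P_{e}$ minimal over $I_{e}$ and $x_{e}\in P_{e}$ there exist $y_{e}\notin P_{e}$ and $n$ with $y_{e}x_{e}^{n}\in I_{e}$) to prove the key local fact: for \emph{any} two distinct minimal primes $P_{1_{e}},P_{2_{e}}$ and any $x_{1_{e}}\in P_{1_{e}}\setminus P_{2_{e}}$, $x_{2_{e}}\in P_{2_{e}}\setminus P_{1_{e}}$, one already has $x_{1_{e}}x_{2_{e}}\in I_{e}$ (shrinking the exponents via the 2-absorbing property and then using the element $c_{1_{e}}+c_{2_{e}}$). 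A third minimal prime $P_{3_{e}}$ then yields an immediate contradiction: choosing $y_{i_{e}}$ in each $P_{i_{e}}$ avoiding the other two, $y_{1_{e}}y_{2_{e}}\in I_{e}\subseteq P_{3_{e}}$ forces $y_{1_{e}}\in P_{3_{e}}$ or $y_{2_{e}}\in P_{3_{e}}$. This argument only ever looks at three primes at a time, so it is insensitive to whether there are infinitely many minimal primes. If you want to repair your proof without changing its structure, you would either have to add the Noetherian hypothesis (weakening the theorem), or replace your global intersection step by something like the paper's Theorem 3.11, which is really the point of that theorem's placement just before this one.
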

\begin{proof}
 Let $\Gamma =\{P_{i_{e}}$ $|$ $P_{i_{e}}$ is an $e$-prime ideal of $%
R_{e}$ that is minimal over $I_{e}\}.$ Suppose that $\Gamma $ has at least
three elements. Let $P_{1_{e}},P_{2_{e}}\in \Gamma $ be two distinct $e$%
-prime ideals. Hence, there exist $x_{1_{e}}\in P_{1_{e}}\backslash P_{2_{e}}
$ and $x_{2_{e}}\in P_{2_{e}}\backslash P_{1_{e}}$. First we show that $%
x_{1_{e}}x_{2_{e}}\ \in I_{e}$. By \textbf{Theorem 3.11}, there exist $%
c_{2_{e}}\in R_{e}\backslash P_{1_{e}}$ and $c_{1_{e}}\in R_{e}\backslash
P_{2_{e}}$ such that $c_{2_{e}}x_{1_{e}}^{k_{1}}\in I_{e}$ and $%
c_{1_{e}}x_{2_{e}}^{k_{2}}\in I_{e}$ for some $k_{1},k_{2}\in
%TCIMACRO{\U{2124} }%
%BeginExpansion
\mathbb{Z}
%EndExpansion
^{+}$. Since $x_{1_{e}},x_{2_{e}}\not\in P_{1_{e}}\cap P_{2_{e}}$ and $I_{e}$
is an $e$-$2$-absorbing ideal of $R_{e}$, we conclude that $%
c_{2_{e}}x_{1_{e}}\in I_{e}$ and $c_{1_{e}}x_{2_{e}}\in I_{e}$. Since $%
x_{1_{e}},x_{2_{e}}\not\in P_{1_{e}}\cap P_{2_{e}}$ and $%
c_{2_{e}}x_{1_{e}},c_{1_{e}}x_{2_{e}}\in I_{e}\subseteq P_{1_{e}}\cap
P_{2_{e}}$, we conclude that $c_{1_{e}}\in P_{1_{e}}\backslash P_{2_{e}}$
and $c_{2_{e}}\in P_{2_{e}}\backslash P_{1_{e}}$, and thus $%
c_{1_{e}},c_{2_{e}}\not\in P_{1_{e}}\cap P_{2_{e}}$. Since $%
c_{2_{e}}x_{1_{e}}\in I_{e}$ and $c_{1_{e}}x_{2_{e}}\in I_{e}$, we have $%
(c_{1_{e}}+c_{2_{e}})x_{1_{e}}x_{2_{e}}\in I_{e}$. Hence, $%
c_{1_{e}}+c_{2_{e}}\not\in P_{1_{e}}$ and $c_{1_{e}}+c_{2_{e}}\not\in
P_{2_{e}}$. Since $(c_{1_{e}}+c_{2_{e}})x_{1_{e}}\not\in P_{2_{e}}$ and $%
(c_{1_{e}}+c_{2_{e}})x_{2_{e}}\not\in P_{1_{e}}$, neither $%
(c_{1_{e}}+c_{2_{e}})x_{1_{e}}\in I_{e}$ nor $(c_{1_{e}}+c_{2_{e}})x_{2_{e}}%
\in I_{e}$, and hence $x_{1_{e}}x_{2_{e}}\in I_{e}$. Now, suppose that there
exists $P_{3_{e}}\in \Gamma $ such that $P_{3_{e}}$ is neither $P_{1_{e}}$
nor $P_{2_{e}}$. Then we can choose $y_{1_{e}}\in P_{1_{e}}\backslash
(P_{2_{e}}\cup P_{3_{e}})$, $y_{2_{e}}\in P_{2_{e}}\backslash (P_{1_{e}}\cup
P_{3_{e}})$, and $y_{3_{e}}\in P_{3_{e}}\backslash (P_{1_{e}}\cup P_{2_{e}})$%
. By the previous argument $y_{1_{e}}y_{2_{e}}\in I_{e}$. Since $%
I_{e}\subseteq P_{1_{e}}\cap P_{2_{e}}\cap P_{3_{e}}$ and $%
y_{1_{e}}y_{2_{e}}\in I_{e}$, we conclude that either $y_{1_{e}}\in P_{3_{e}}
$ or $y_{2_{e}}\in P_{3_{e}},$ a contradiction. Hence, $\Gamma $ has at most
two elements.
\end{proof}

%------------------------------------------------------------------
%--------------------theorem 3-13----------------------------------------------
\begin{theorem}
Let $R$ be a $G$-graded ring and $I=\oplus _{g\in
G}I_{g}$ a graded ideal of $R$ such that $I_{e}$ is an $e$-$2$-absorbing
primary ideal of $R_{e}$. Then one of the following statements must hold.
\begin{enumerate}[\upshape (i)]
 \item $Gr(I_{e})=P_{e},$ where $P_{e}$ is an $e$-prime ideal of $R_{e}$.

 \item $Gr(I_{e})=P_{1_{e}}\cap P_{2_{e}}$, where $P_{1_{e}}$ and $P_{2_{e}}$
are the only distinct $e$-prime ideals of $R_{e}$ that are minimal over $%
I_{e}$.
 \end{enumerate}

\end{theorem}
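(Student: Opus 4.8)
The plan is to reduce the problem to the bound on minimal primes already recorded in \textbf{Theorem 3.12}, after first passing to the radical. Since $I_{e}$ is an $e$-$2$-absorbing primary ideal of $R_{e}$, \textbf{Theorem 2.7} gives immediately that $Gr(I_{e})$ is an $e$-$2$-absorbing ideal of $R_{e}$. I would also observe that $Gr(I_{e})$ is itself the $e$-component of a graded ideal: writing $Gr(I)$ for the graded radical of $I$, a homogeneous element $r_{e}\in R_{e}$ lies in $Gr(I)$ exactly when $r_{e}^{n}\in I\cap R_{e}=I_{e}$ for some $n$, so $(Gr(I))_{e}=Gr(I_{e})$. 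Hence $Gr(I_{e})$ is a legitimate input for \textbf{Theorem 3.12}.

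The second ingredient is the classical description of the radical as an intersection of minimal primes, applied in the commutative ring $R_{e}$:
\[
Gr(I_{e})=\bigcap\{P_{e}: P_{e}\text{ is an }e\text{-prime ideal of }R_{e}\text{ minimal over }I_{e}\}.
\]
Because $Gr(I_{e})$ equals its own radical and every $e$-prime ideal is radical, an $e$-prime ideal contains $I_{e}$ if and only if it contains $Gr(I_{e})$; consequently the $e$-prime ideals minimal over $I_{e}$ are precisely those minimal over $Gr(I_{e})$. Applying \textbf{Theorem 3.12} to the $e$-$2$-absorbing ideal $Gr(I_{e})$ therefore shows that there are at most two $e$-prime ideals of $R_{e}$ minimal over $I_{e}$, while the properness $I_{e}\neq R_{e}$ forces at least one such minimal prime to exist.

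Finally I would split according to this count. If there is a unique minimal $e$-prime $P_{e}$ over $I_{e}$, the displayed intersection collapses to $Gr(I_{e})=P_{e}$, which is statement (i). If there are exactly two, say $P_{1_{e}}$ and $P_{2_{e}}$, then $Gr(I_{e})=P_{1_{e}}\cap P_{2_{e}}$, and by the bound these are the only minimal $e$-primes over $I_{e}$, which is statement (ii). As the argument is essentially an assembly of \textbf{Theorem 2.7} and \textbf{Theorem 3.12}, I do not expect a genuine obstacle; the single point requiring care is the transfer of the minimal-prime count from $Gr(I_{e})$ back to $I_{e}$, which rests on the fact that passing to the radical leaves the set of minimal primes unchanged.
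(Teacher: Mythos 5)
Your proof is correct and follows essentially the same route as the paper: apply \textbf{Theorem 2.7} to see that $Gr(I_{e})$ is an $e$-$2$-absorbing ideal, then feed it into \textbf{Theorem 3.12} using $Gr(Gr(I_{e}))=Gr(I_{e})$. The only difference is that you spell out the routine facts the paper leaves implicit (radical equals the intersection of its minimal primes, and passing to the radical does not change the set of minimal primes), which is a faithful elaboration rather than a different argument.
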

\begin{proof}
Suppose that $I_{e}$ is an $e$-$2$-absorbing primary ideal of $R_{e}
$, then $Gr(I_{e})$ is an $e$-$2$-absorbing ideal of $R_{e}$ by \textbf{%
Theorem 2.7}. Hence, since $Gr(Gr(I_{e}))=Gr(I_{e})$, the result follows
from \textbf{Theorem 3.12}.
\end{proof}

%----------------------------------theorem 3-14--------------------------------------------
\begin{theorem}
Let $R$ be $G$-graded $gr$-Noetherian ring, $M$
a graded $R$-module, $N=\oplus _{g\in G}N_{g}$ a graded submodule of $M$ and
$g\in G.$ If $N_{g}$ is a $g$-$G2$-absorbing submodule of an $R_{e}$-module
$M_{g}$ and $m_{g}\in M_{g}\backslash N_{g}$, then either $%
Gr((N_{g}:_{R_{e}}m_{g}))$ is an $e$-prime ideal of $R_{e}$ or there exist $%
r_{e}\in R_{e}\backslash Gr((N_{g}:_{R_{e}}m_{g}))$ and $n\in
%TCIMACRO{\U{2124} }%
%BeginExpansion
\mathbb{Z}
%EndExpansion
^{+}$ such that $Gr((N_{g}:_{R_{e}}r_{e}^{n}m_{g}))$ is an $e$-prime ideal
of $R_{e}$.
\end{theorem}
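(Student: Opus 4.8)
The plan is to reduce the statement to a purely ideal-theoretic question about the radical of a colon ideal and then invoke \textbf{Theorem 3.13}. First I would set $I_{e}=(N_{g}:_{R_{e}}m_{g})$ and observe that, since $N_{g}$ is an $R_{e}$-submodule of $M_{g}$, we have $(N_{g}:_{R_{e}}m_{g})=(N_{g}:_{R_{e}}R_{e}m_{g})$, while $m_{g}\in R_{e}m_{g}\backslash N_{g}$ forces $R_{e}m_{g}\not\subseteq N_{g}$. Hence by \textbf{Theorem 2.6}$(i)$ the ideal $I_{e}$ is an $e$-$2$-absorbing primary ideal of $R_{e}$, so \textbf{Theorem 3.13} applies to $I_{e}$.

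\textbf{Theorem 3.13} then splits the argument into two cases. If $Gr(I_{e})=Gr((N_{g}:_{R_{e}}m_{g}))$ is itself an $e$-prime ideal of $R_{e}$, the first alternative of the conclusion holds and we are done. Otherwise $Gr(I_{e})=P_{1_{e}}\cap P_{2_{e}}$, where $P_{1_{e}}\not=P_{2_{e}}$ are the only $e$-prime ideals of $R_{e}$ minimal over $I_{e}$; since distinct minimal primes are incomparable, I can choose $r_{e}\in P_{2_{e}}\backslash P_{1_{e}}$, and then $r_{e}\not\in P_{1_{e}}\cap P_{2_{e}}=Gr((N_{g}:_{R_{e}}m_{g}))$, exactly as required of the witness $r_{e}$. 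The next, purely bookkeeping, step is the identity $(N_{g}:_{R_{e}}r_{e}^{n}m_{g})=(I_{e}:_{R_{e}}r_{e}^{n})$, immediate from $s_{e}r_{e}^{n}m_{g}\in N_{g}\Leftrightarrow s_{e}r_{e}^{n}\in I_{e}$.

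The crux is to show $Gr((I_{e}:_{R_{e}}r_{e}^{n}))=P_{1_{e}}$ for a suitable $n$, which produces the required $e$-prime ideal. Here I would use that $R$ is $gr$-Noetherian, hence $R_{e}$ is Noetherian, so the ascending chain $(I_{e}:_{R_{e}}r_{e})\subseteq (I_{e}:_{R_{e}}r_{e}^{2})\subseteq \cdots$ stabilizes at some $n$. For $P_{1_{e}}\subseteq Gr((I_{e}:_{R_{e}}r_{e}^{n}))$, take $x_{e}\in P_{1_{e}}$; then $x_{e}r_{e}\in P_{1_{e}}\cap P_{2_{e}}=Gr(I_{e})$, so $x_{e}^{k}r_{e}^{k}=(x_{e}r_{e})^{k}\in I_{e}$ for some $k$, whence $x_{e}^{k}\in (I_{e}:_{R_{e}}r_{e}^{k})\subseteq (I_{e}:_{R_{e}}r_{e}^{n})$ by stabilization, giving $x_{e}\in Gr((I_{e}:_{R_{e}}r_{e}^{n}))$. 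For the reverse inclusion, if $x_{e}^{m}r_{e}^{n}\in I_{e}\subseteq P_{1_{e}}$ while $r_{e}\not\in P_{1_{e}}$, primeness of $P_{1_{e}}$ forces $x_{e}\in P_{1_{e}}$. The two inclusions give $Gr((N_{g}:_{R_{e}}r_{e}^{n}m_{g}))=P_{1_{e}}$, an $e$-prime ideal, which is the second alternative.

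I expect the main obstacle to be the crux computation of the last paragraph: obtaining a single exponent $n$ that works uniformly relies on the Noetherian stabilization of the colon chain, and one must exploit that the chosen $r_{e}$ lies in precisely one of the two minimal primes so that only $P_{1_{e}}$ survives in the radical. By contrast, the module-to-ideal reductions in the first two paragraphs are routine once \textbf{Theorem 2.6}$(i)$ and \textbf{Theorem 3.13} are available.
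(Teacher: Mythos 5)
Your proof is correct, and it takes a genuinely different route from the paper's. The paper applies Corollary 2.8 and Theorem 3.13 to $(N_{g}:_{R_{e}}M_{g})$, the colon by the \emph{whole module}, and then in each resulting case returns to the module and re-invokes the $g$-$G2$-absorbing property of $N_{g}$ element-by-element: for $r_{e}s_{e}\in Gr((N_{g}:_{R_{e}}m_{g}))$ it splits $(r_{e}s_{e})^{n}m_{g}\in N_{g}$ via the defining condition, the prime $P_{1_{e}}$ (resp.\ $P_{2_{e}}$) absorbing the case $(r_{e}s_{e})^{n}\in (N_{g}:_{R_{e}}M_{g})$; its witness $r_{e}$ is chosen in $P_{1_{e}}\backslash Gr((N_{g}:_{R_{e}}m_{g}))$ with $P_{1_{e}}$ minimal over $(N_{g}:_{R_{e}}M_{g})$, and the Noetherian hypothesis enters through $(P_{1_{e}}P_{2_{e}})^{n}m_{g}\subseteq N_{g}$. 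You instead transfer everything to the ideal $I_{e}=(N_{g}:_{R_{e}}m_{g})$ at the outset via Theorem 2.6(i), apply Theorem 3.13 to $I_{e}$ itself, and never touch the module again: you pick $r_{e}\in P_{2_{e}}\backslash P_{1_{e}}$ (legitimate, since distinct minimal primes over $I_{e}$ are incomparable), use $(N_{g}:_{R_{e}}r_{e}^{n}m_{g})=(I_{e}:_{R_{e}}r_{e}^{n})$, and compute $Gr((I_{e}:_{R_{e}}r_{e}^{n}))=P_{1_{e}}$ exactly, with Noetherianness entering through stabilization of the chain $(I_{e}:_{R_{e}}r_{e})\subseteq (I_{e}:_{R_{e}}r_{e}^{2})\subseteq \cdots$; both inclusions in that computation are sound. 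What your route buys is modularity and a sharper conclusion: all module-theoretic input is concentrated in one citation, after which you have in effect proved the purely ideal-theoretic statement that for any $e$-$2$-absorbing primary ideal $I_{e}$ of a Noetherian $R_{e}$, either $Gr(I_{e})$ is $e$-prime or $Gr((I_{e}:_{R_{e}}r_{e}^{n}))$ is a minimal prime of $I_{e}$ for suitable $r_{e}\not\in Gr(I_{e})$ and $n$. What the paper's route buys is staying entirely at the level of the definition of $g$-$G2$-absorbing (no colon-chain trick), at the cost of running essentially the same prime-absorption argument three times.

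One small point you should make explicit: Theorem 2.6(i) is stated for the $g$-component $K_{g}$ of a graded submodule $K$ of $M$, so to apply it to $R_{e}m_{g}$ you should observe that $K=Rm_{g}$ is a graded submodule of $M$ with $K\cap M_{g}=R_{e}m_{g}$ (the degree-$g$ part of $\sum_{h}r_{h}m_{g}$ is $r_{e}m_{g}$). Alternatively, the $e$-$2$-absorbing primariness of $(N_{g}:_{R_{e}}m_{g})$ follows in one line directly from the definition: $r_{e}s_{e}t_{e}\in (N_{g}:_{R_{e}}m_{g})$ means $r_{e}s_{e}(t_{e}m_{g})\in N_{g}$, and the three alternatives of the definition applied to the element $t_{e}m_{g}$ give exactly $r_{e}t_{e}\in Gr((N_{g}:_{R_{e}}m_{g}))$ or $s_{e}t_{e}\in Gr((N_{g}:_{R_{e}}m_{g}))$ or $r_{e}s_{e}\in (N_{g}:_{R_{e}}m_{g})$. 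Either patch closes this minor gap.
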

\begin{proof}
Let $m_{g}\in M_{g}\backslash N_{g}$ and assume that $N_{g}$ is a $%
g $-$G2$-absorbing submodule of an $R_{e}$-module $M_{g},$\ then $%
Gr((N_{g}:_{R_{e}}M_{g}))$ is an $e$-$2$-absorbing ideal of $R_{e}$ by
\textbf{Corollary 2.8}. Now, by\textbf{\ Theorem 3.13} we get either $%
Gr((N_{g}:_{R_{e}}M_{g}))=P_{1_{e}}$ or $Gr((N_{g}:_{R_{e}}M_{g}))=P_{1_{e}}%
\cap P_{2_{e}}$, where $P_{1_{e}}$ and $P_{2_{e}}$ are distinct $e$-prime
ideals of $R_{e}$. First, assume that $Gr((N_{g}:_{R_{e}}M_{g}))=P_{1_{e}}$.
Let $r_{e}s_{e}\in Gr((N_{g}:_{R_{e}}m_{g}))$ for some $r_{e},s_{e}\in R_{e}$%
, then $(r_{e}s_{e})^{n}m_{g}\in N_{g}$ for some $n\in
%TCIMACRO{\U{2124} }%
%BeginExpansion
\mathbb{Z}
%EndExpansion
^{+}$. Thus, we get either $r_{e}^{nt}m_{g}\in N_{g}$ or $s_{e}^{nk}m_{g}\in
N_{g}$ for some $t,k\in
%TCIMACRO{\U{2124} }%
%BeginExpansion
\mathbb{Z}
%EndExpansion
^{+}$ or $(r_{e}s_{e})^{n}\in (N_{g}:_{R_{e}}M_{g})$ as $N_{g}$ is a $g$-$G2 $-absorbing submodule of $M_{g}$. If either $r_{e}^{nt}m_{g}\in N_{g}$
or $s_{e}^{nk}m_{g}\in N_{g}$, then either $r_{e}\in
Gr((N_{g}:_{R_{e}}m_{g}))$ or $s_{e}\in Gr((N_{g}:_{R_{e}}m_{g}))$. Now, if $%
(r_{e}s_{e})^{n}\in (N_{g}:_{R_{e}}M_{g})$, then $r_{e}s_{e}\in P_{1_{e}}$.
Since $P_{1_{e}}$ is an $e$-prime ideal of $R_{e}$, then either $r_{e}\in
P_{1_{e}}=Gr((N_{g}:_{R_{e}}M_{g}))\subseteq Gr((N_{g}:_{R_{e}}m_{g}))$ or $%
s_{e}\in P_{1_{e}}=Gr((N_{g}:_{R_{e}}M_{g}))\subseteq
Gr((N_{g}:_{R_{e}}m_{g}))$. Therefore, $Gr((N_{g}:_{R_{e}}m_{g}))$ is an $e$%
-prime ideal of $R_{e}$. Now, we can assume that $%
Gr((N_{g}:_{R_{e}}M_{g}))=P_{1_{e}}\cap P_{2_{e}}$. If $P_{1_{e}}\subseteq
Gr((N_{g}:_{R_{e}}m_{g}))$, then we get the result by using the previous
argument. If $P_{1_{e}}\not\subseteq Gr((N_{g}:_{R_{e}}m_{g}))$, then there
exists $r_{e}\in P_{1_{e}}\backslash Gr((N_{g}:_{R_{e}}m_{g}))$. Also, $%
P_{1_{e}}P_{2_{e}}\subseteq Gr(P_{1_{e}}P_{2_{e}})=Gr(P_{1_{e}}\cap
P_{2_{e}})=Gr((N_{g}:_{R_{e}}M_{g}))\subseteq Gr((N_{g}:_{R_{e}}m_{g}))$.
Now, $R$ is $gr$-Noetherian implies that $R_{e}$ is Noetherian, so there exists $%
n\in
%TCIMACRO{\U{2124} }%
%BeginExpansion
\mathbb{Z}
%EndExpansion
^{+}$ such that $(P_{1_{e}}P_{2_{e}})^{n}m_{g}\subseteq N_{g}$ and then $%
r_{e}^{n}P_{2_{e}}^{n}m_{g}\subseteq N_{g}$. Hence, $P_{2_{e}}\subseteq
Gr((N_{g}:_{R_{e}}r_{e}^{n}m_{g}))$ and we get the result by a similar
argument.
\end{proof}
%--------------------------------------------------------------------------
Let $N=\oplus _{g\in G}N_{g}$ be a graded submodule of a graded $R
$-module $M$ and $g\in G.$ An efficient covering of $N_{g}$ is a covering $%
N_{g}\subseteq N_{1_{g}}\cup N_{2_{g}}\cup ...\cup N_{n_{g}}$ where $%
N_{i_{g}}$ is a submodule of an $R_{e}$-module $M_{g}$ such that $N_{g}$ $%
\not\subseteq $ $N_{i_{g}}$ for $i=1,...,n$. Also, we say that $%
N_{g}=N_{1_{g}}\cup N_{2_{g}}\cup ...\cup N_{n_{g}}$ is an efficient union
if none of the $N_{i_{g}}$ may be excluded, (see \cite{23}).

%----------------------------------Theorem 3-15--------------------------------------------
\begin{theorem}
Let $R$ be a $G$-graded $gr$-Noetherian ring, $M$
a graded $R$-module, $N=\oplus _{g\in G}N_{g}$ and $N_{i}=\oplus _{g\in
G}N_{i_{g}}$ be graded submodules of $M,$ for $i=1,...,n$ and $g\in G.$ Let $%
N_{g}\subseteq N_{1_{g}}\cup N_{2_{g}}\cup ...\cup N_{n_{g}}$ be an
efficient covering consisting of submodules of an $R_{e}$-module $M_{g}$,
where $n>2$. If $Gr((N_{i_{g}}:_{R_{e}}M_{g}))\not\subseteq
Gr((N_{k_{g}}:_{R_{e}}m_{k_{g}}))$ for all $m_{k_{g}}\in M_{g}\backslash
N_{k_{g}}$ whenever $i\not=k$, then $N_{i_{g}}$ is not a $g$-$G2$-absorbing
submodule of $M_{g},$ for $i=1,...,n$.
\end{theorem}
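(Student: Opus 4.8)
The plan is to argue by contradiction, fixing an index $t$ with $1\le t\le n$, assuming $N_{t_{g}}$ is a $g$-$G2$-absorbing submodule of $M_{g}$, and deriving a contradiction with the covering and radical hypotheses. The first move is to turn the efficient covering into an efficient union: intersecting with $N_{g}$ gives $N_{g}=\bigcup_{i=1}^{n}(N_{g}\cap N_{i_{g}})$, each piece $N_{g}\cap N_{i_{g}}$ is a proper submodule of $N_{g}$ (since $N_{g}\not\subseteq N_{i_{g}}$), and none may be omitted because the original covering is efficient. The standard efficient-union avoidance argument (see \cite{23}) then yields, for the distinguished index $t$,
\[
\bigcap_{i\neq t}(N_{g}\cap N_{i_{g}})\subseteq N_{g}\cap N_{t_{g}}\subseteq N_{t_{g}}.
\]
Concretely, if $x\in N_{g}$ lies in every $N_{i_{g}}$ with $i\neq t$ then $x\in N_{t_{g}}$; I would include the one-line verification by choosing an element of $N_{t_{g}}\setminus\bigcup_{i\neq t}N_{i_{g}}$ (which exists by efficiency), adding it to $x$, and using that $N_{g}$ is covered.

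Next I fix $m_{g}\in N_{g}\setminus N_{t_{g}}$, which exists because efficiency forces $N_{g}\not\subseteq N_{t_{g}}$. The crux is to control $Gr((N_{t_{g}}:_{R_{e}}m_{g}))$. Since $N_{t_{g}}$ is only $g$-$G2$-absorbing, \textbf{Theorem 2.6}(i) (applied to $K_{g}=R_{e}m_{g}\not\subseteq N_{t_{g}}$) shows $(N_{t_{g}}:_{R_{e}}m_{g})$ is merely $e$-$2$-absorbing primary, so by \textbf{Corollary 2.8} its graded radical is only $e$-$2$-absorbing, and by \textbf{Theorem 3.13} it may be an intersection of two distinct $e$-prime ideals. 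This is exactly the obstacle. I resolve it with \textbf{Theorem 3.14}: either $Gr((N_{t_{g}}:_{R_{e}}m_{g}))$ is already $e$-prime, or there are $r_{e}\in R_{e}\setminus Gr((N_{t_{g}}:_{R_{e}}m_{g}))$ and $s\in\mathbb{Z}^{+}$ with $Gr((N_{t_{g}}:_{R_{e}}r_{e}^{s}m_{g}))$ an $e$-prime ideal. In the latter case I replace $m_{g}$ by $r_{e}^{s}m_{g}$ and relabel; the new element still lies in $N_{g}\setminus N_{t_{g}}$ because $N_{g}$ is a submodule and $r_{e}\notin Gr((N_{t_{g}}:_{R_{e}}m_{g}))$ forces $r_{e}^{s}m_{g}\notin N_{t_{g}}$. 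Thus I may assume $Q:=Gr((N_{t_{g}}:_{R_{e}}m_{g}))$ is an $e$-prime ideal of $R_{e}$.

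Finally I exploit the $gr$-Noetherian hypothesis. For each $i\neq t$ set $A_{i}:=Gr((N_{i_{g}}:_{R_{e}}M_{g}))$. Since $R_{e}$ is Noetherian, some power satisfies $A_{i}^{n_{i}}\subseteq(N_{i_{g}}:_{R_{e}}M_{g})$, hence $A_{i}^{n_{i}}M_{g}\subseteq N_{i_{g}}$. As a product of ideals is contained in each factor, $\bigl(\prod_{i\neq t}A_{i}^{n_{i}}\bigr)m_{g}\subseteq A_{j}^{n_{j}}M_{g}\subseteq N_{j_{g}}$ for every $j\neq t$, and this product also lies in $N_{g}$ because $m_{g}\in N_{g}$. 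By the containment of the first paragraph, $\bigl(\prod_{i\neq t}A_{i}^{n_{i}}\bigr)m_{g}\subseteq N_{t_{g}}$, so $\prod_{i\neq t}A_{i}^{n_{i}}\subseteq(N_{t_{g}}:_{R_{e}}m_{g})\subseteq Q$. Since $Q$ is $e$-prime it contains some factor $A_{i_{0}}^{n_{i_{0}}}$, whence $A_{i_{0}}\subseteq Gr(Q)=Q$. But applying the hypothesis with $k=t$ and the homogeneous element $m_{g}\in M_{g}\setminus N_{t_{g}}$ gives $A_{i_{0}}=Gr((N_{i_{0}g}:_{R_{e}}M_{g}))\not\subseteq Gr((N_{t_{g}}:_{R_{e}}m_{g}))=Q$, a contradiction. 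Hence no $N_{t_{g}}$ is $g$-$G2$-absorbing, proving the claim for all $i$. The main difficulty is the middle paragraph: forcing the only-$2$-absorbing ideal $Gr((N_{t_{g}}:_{R_{e}}m_{g}))$ to become genuinely prime, which is precisely where \textbf{Theorem 3.14} is indispensable; the fact that the radical condition is assumed for \emph{all} $m_{k_{g}}\in M_{g}\setminus N_{k_{g}}$ is what keeps it available after the replacement of $m_{g}$ by $r_{e}^{s}m_{g}$.
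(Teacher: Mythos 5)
Your proof is correct and follows essentially the same route as the paper's: reduce by contradiction to the efficient-union containment $\bigcap_{i\neq t}(N_{g}\cap N_{i_{g}})\subseteq N_{g}\cap N_{t_{g}}$, invoke \textbf{Theorem 3.14} to replace $m_{g}$ by $r_{e}^{s}m_{g}$ so that $Q=Gr((N_{t_{g}}:_{R_{e}}m_{g}))$ may be assumed $e$-prime, and then play the hypothesis $Gr((N_{i_{g}}:_{R_{e}}M_{g}))\not\subseteq Q$ against the primeness of $Q$. The only difference is in the mechanics of the final step: where the paper picks elements $s_{i_{e}}\in Gr((N_{i_{g}}:_{R_{e}}M_{g}))\setminus Q$ and a common power $s_{e}^{j}$ to exhibit a single element of $\bigcap_{i\neq t}(N_{g}\cap N_{i_{g}})$ outside $N_{t_{g}}$, you run the same contradiction at the level of ideals, using the Noetherian fact that a power of a radical lies inside the ideal to obtain $\prod_{i\neq t}A_{i}^{n_{i}}\subseteq Q$ and then extracting a factor $A_{i_{0}}\subseteq Q$ --- an equivalent rearrangement, at the price of one extra appeal to Noetherianness that the paper's element-wise version does not need.
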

\begin{proof}
Suppose that there exists $k\in \{1,...,n\}$ such that $N_{k_{g}}$
is a $g$-$G2$-absorbing submodule of $M_{g}$. Since $N_{g}\subseteq
N_{1_{g}}\cup N_{2_{g}}\cup ...\cup N_{n_{g}}$ is an efficient covering of $%
N_{g}$, $N_{g}\not\subseteq N_{k_{g}}$, so there exists $m_{k_{g}}\in
N_{g}\backslash N_{k_{g}}$. It is easy to see that $N_{g}=(N_{1_{g}}\cap
N_{g})\cup (N_{2_{g}}\cap N_{g})\cup ...\cup (N_{n_{g}}\cap N_{g})$ is an
efficient union of $N_{g}$, so by \cite[Lemma 2.2]{23}, we have $\cap
_{i\not=k}(N_{g}\cap N_{i_{g}})=\cap _{i=1}^{n}(N_{g}\cap
N_{i_{g}})\subseteq N_{g}\cap N_{k_{g}}$. Now, by using \textbf{Theorem 3.14}%
, we get either $Gr((N_{k_{g}}:_{R_{e}}m_{k_{g}}))$ is an $e$-prime ideal of
$R_{e}$ or there exists $r_{e}\in R_{e}\backslash
Gr((N_{k_{g}}:_{R_{e}}m_{k_{g}}))$ such that $%
Gr((N_{k_{g}}:_{R_{e}}r_{e}^{t}m_{k_{g}}))$ is an $e$-prime ideal of $R_{e}$%
, where $t\in
%TCIMACRO{\U{2124} }%
%BeginExpansion
\mathbb{Z}
%EndExpansion
^{+}$. Hence, if $Gr((N_{k_{g}}:_{R_{e}}m_{k_{g}}))$ is an $e$-prime ideal
of $R_{e}$, then $Gr((N_{i_{g}}:_{R_{e}}M_{g}))\not\subseteq
Gr((N_{k_{g}}:_{R_{e}}m_{k_{g}}))$ for all $i\not=k$. So, there exists $%
s_{i_{e}}\in Gr((N_{i_{g}}:_{R_{e}}M_{g}))\backslash
Gr((N_{k_{g}}:_{R_{e}}m_{k_{g}}))$, where $i\not=k$. Thus, $%
s_{i_{e}}^{j_{i}}\in (N_{i_{g}}:_{R_{e}}M_{g})\backslash
(N_{k_{g}}:_{R_{e}}m_{k_{g}})$ where $i\not=k$ and $j_{i}\in
%TCIMACRO{\U{2124} }%
%BeginExpansion
\mathbb{Z}
%EndExpansion
^{+}$. Let $s_{e}=\Pi _{i\not=k}s_{i_{e}}$ and $j=max%
\{j_{1},j_{2},...,j_{k-1},j_{k+1},...,j_{n}\}$, then $s_{e}^{j}\in
(N_{i_{g}}:_{R_{e}}M_{g})\backslash (N_{k_{g}}:_{R_{e}}m_{k_{g}})$ for all $%
i\not=k$. Thus, $s_{e}^{j}m_{k_{g}}\in \cap _{i\not=k}(N_{g}\cap
N_{i_{g}})\backslash (N_{g}\cap N_{k_{g}})=\emptyset $, a contradiction.
Therefore, $N_{k_{g}}$ is not a $g$-$G2$-absorbing submodule of $M_{g}$.
Now, if $Gr((N_{k_{g}}:_{R_{e}}r_{e}^{t}m_{k_{g}}))$ is an $e$-prime ideal
of $R_{e}$, where $r_{e}\in R_{e}\backslash Gr((N_{k_{g}}:_{R_{e}}m_{k_{g}}))$
and $t\in
%TCIMACRO{\U{2124} }%
%BeginExpansion
\mathbb{Z}
%EndExpansion
^{+}$. So, there exists $s_{i_{e}}\in
Gr((N_{i_{g}}:_{R_{e}}M_{g}))\backslash
Gr((N_{k_{g}}:_{R_{e}}r_{e}^{t}m_{k_{g}}))$, where $i\not=k$. Hence, $%
s_{i_{e}}^{j_{i}}\in (N_{i_{g}}:_{R_{e}}M_{g})\backslash
(N_{k_{g}}:_{R_{e}}r_{e}^{t}m_{k_{g}})$ where $i\not=k$ and $j_{i}\in
%TCIMACRO{\U{2124} }%
%BeginExpansion
\mathbb{Z}
%EndExpansion
^{+}.$ Let $s_{e}=\Pi _{i\not=k}s_{i_{e}}$ and $j=max%
\{j_{1},j_{2},...,j_{k-1},j_{k+1},...,j_{n}\}$, then $s_{e}^{j}\in
(N_{i_{g}}:_{R_{e}}M_{g})\backslash (N_{k_{g}}:_{R_{e}}r_{e}^{t}m_{k_{g}})$
for all $i\not=k$. \ Therefore, $s_{e}^{j}r_{e}^{t}m_{k_{g}}\in \cap
_{i\not=k}(N_{g}\cap N_{i_{g}})\backslash (N_{g}\cap N_{k_{g}})=\emptyset $,
a contradiction. Therefore, $N_{k_{g}}$ is not a $g$-$G2$-absorbing
submodule of $M_{g}$.
\end{proof}

%----------------------------------theorem 3-16--------------------------------------------
\begin{theorem}
Let $R$ be a $G$-graded $gr$-Noetherian ring, $M$ a
graded $R$-module, $N=\oplus _{g\in G}N_{g}$ and $N_{i}=\oplus _{g\in
G}N_{i_{g}}$ be graded submodules of $M,$ for $i=1,...,n$ where $n\geqslant
2 $ and $g\in G$. If $N_{g}\subseteq N_{1_{g}}\cup N_{2_{g}}\cup ...\cup
N_{n_{g}}$ such that at most two of $N_{1_{g}},N_{2_{g}},...,N_{n_{g}}$ are
not $g$-$G2$-absorbing submodules of an $R_{e}$-module\ $M_{g}$ and $%
Gr((N_{i_{g}}:_{R_{e}}M_{g}))\not\subseteq Gr((N_{k_{g}}:_{R_{e}}m_{k_{g}})$
for all $m_{k_{g}}\in M_{g}\backslash N_{k_{g}}$ whenever $i\not=k$, then $%
N_{g}\subseteq N_{i_{g}}$ for some $i=1,...,n$.
\end{theorem}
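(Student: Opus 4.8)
The plan is to argue by contradiction, combining the reduction to an efficient covering with Theorem 3.15. Assume, contrary to the desired conclusion, that $N_{g}\not\subseteq N_{i_{g}}$ for every $i\in\{1,\dots,n\}$. Starting from the given covering $N_{g}\subseteq N_{1_{g}}\cup\cdots\cup N_{n_{g}}$, I would successively discard any member $N_{i_{g}}$ that is superfluous, that is, for which $N_{g}$ remains contained in the union of the surviving members, until no further term can be deleted. This produces an efficient covering $N_{g}\subseteq N_{i_{1g}}\cup\cdots\cup N_{i_{tg}}$ for some $1\le t\le n$, equivalently an efficient union $N_{g}=(N_{g}\cap N_{i_{1g}})\cup\cdots\cup(N_{g}\cap N_{i_{tg}})$. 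Since the standing assumption forbids $N_{g}\subseteq N_{i_{g}}$ for any single index, a one-term efficient covering cannot occur, so $t\ge 2$.

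Next I would eliminate the case $t=2$ by an elementary submodule argument. Efficiency forces $N_{g}\not\subseteq N_{i_{1g}}$ and $N_{g}\not\subseteq N_{i_{2g}}$, so one may choose $x\in N_{g}\setminus N_{i_{1g}}$ and $y\in N_{g}\setminus N_{i_{2g}}$; the covering then places $x\in N_{i_{2g}}$ and $y\in N_{i_{1g}}$. Considering $x+y\in N_{g}\subseteq N_{i_{1g}}\cup N_{i_{2g}}$ and using that each $N_{i_{jg}}$ is an $R_{e}$-submodule of $M_{g}$ gives, in the first alternative $x=(x+y)-y\in N_{i_{1g}}$, and in the second $y=(x+y)-x\in N_{i_{2g}}$, both contradictions. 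Hence $t\neq 2$, and we are reduced to $t>2$.

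Finally, for $t>2$ I would invoke Theorem 3.15. The hypothesis $Gr((N_{i_{g}}:_{R_{e}}M_{g}))\not\subseteq Gr((N_{k_{g}}:_{R_{e}}m_{k_{g}}))$ for all $m_{k_{g}}\in M_{g}\setminus N_{k_{g}}$ whenever $i\neq k$ holds in particular for the indices surviving in the efficient covering, so Theorem 3.15 applies to $N_{g}\subseteq N_{i_{1g}}\cup\cdots\cup N_{i_{tg}}$ and yields that every $N_{i_{jg}}$ fails to be a $g$-$G2$-absorbing submodule of $M_{g}$. As $t>2$, this means at least three of the $N_{i_{g}}$ are not $g$-$G2$-absorbing, contradicting the hypothesis that at most two of them fail this property. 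Thus no efficient covering with $t\ge 2$ can exist, the contradiction hypothesis is untenable, and therefore $N_{g}\subseteq N_{i_{g}}$ for some $i$.

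The main obstacle is organizing the reduction to an efficient covering cleanly and confirming that the two genuinely distinct mechanisms, namely the elementary fact that a module is never an efficient union of two proper submodules (ruling out $t=2$) and Theorem 3.15 (ruling out $t>2$), together exhaust all cases once the trivial $t=1$ case has been excluded by the contradiction hypothesis.
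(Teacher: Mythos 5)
Your proof is correct, and its skeleton is the same as the paper's: argue by contradiction, dispose of the two-submodule case by the elementary fact that a module contained in a union of two submodules lies in one of them, and use Theorem 3.15 to rule out everything else. The difference is in the middle step, and it is a difference in your favor. The paper's proof simply splits on $n$: for $n=2$ it cites the elementary fact, and for $n>2$ it asserts that the assumption $N_{g}\not\subseteq N_{i_{g}}$ for all $i$ already makes the given covering an efficient covering, and then applies Theorem 3.15 to all $n$ members. That assertion matches the paper's literal definition of ``efficient covering,'' but not the stronger property that Theorem 3.15 actually uses in its proof (where the covering is converted to an efficient \emph{union} and \cite[Lemma 2.2]{23} is invoked, which requires that no member can be excluded). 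Your proposal repairs exactly this: you first discard superfluous members to reach a genuinely irredundant covering of size $t$, exclude $t=1$ by the contradiction hypothesis, exclude $t=2$ by the elementary $x+y$ argument (the same mechanism the paper uses for $n=2$), and for $t>2$ apply Theorem 3.15 to the surviving subfamily — noting correctly that the $Gr((N_{i_{g}}:_{R_{e}}M_{g}))\not\subseteq Gr((N_{k_{g}}:_{R_{e}}m_{k_{g}}))$ hypothesis passes to any subfamily — so that at least three members of the original family fail to be $g$-$G2$-absorbing, contradicting the ``at most two'' hypothesis. So your route is the rigorous version of the paper's argument: it costs one extra reduction step, and it buys the applicability of Theorem 3.15 under the no-member-excludable notion of efficiency that its proof genuinely requires.
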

\begin{proof}
If $n=2$, then we get the result since if a graded submodule is contained
in a union of two graded submodules, then it is contained in one of them. Now, take
$n>2$ and $N_{g}\not\subseteq N_{i_{g}}$ for all $i=1,...,n$, then $%
N_{g}\subseteq N_{1_{g}}\cup N_{2_{g}}\cup ...\cup N_{n_{g}}$ is an
efficient covering of $N_{g}$. So, by \textbf{Theorem 3.15}, $N_{i_{g}}$ is
not a $g$-$G2$-absorbing submodule for all $i=1,...,n$, a contradiction.
Hence, $N_{g}\subseteq N_{i_{g}}$ for some $i=1,...,n$.
\end{proof}

%\section*{\bf Acknowledgements}The authors wish to thank sincerely the referees for their valuable comments and suggestions.

\bigskip\bigskip\bigskip\bigskip

\end{document}